\theoremstyle{plain}
\newtheorem{theorem}{Theorem}[section]
\newtheorem{lemma}[theorem]{Lemma}
\newtheorem{corollary}[theorem]{Corollary}
\newtheorem{proposition}[theorem]{Proposition}
\newtheorem{definition}{Definition}[section]
\newtheorem{remark}{Remark}[section]
\newtheorem{example}{Example}[section]
\newtheorem{Line}{Linesearch}
\newtheorem{method}{Method}
\newtheorem{fact}[theorem]{Fact}
\theoremstyle{definition}
\def\disp{\displaystyle}
\newcommand{\la}{\langle}
\newcommand{\ra}{\rangle}
\newcommand{\ve}{\varepsilon}
\newcommand{\dom}{{\rm dom\,}}
\newcommand{\al}{\alpha}
\newcommand{\prox}{{\rm prox}}
\newcommand{\nexto}{\kern -0.54em}
\newcommand{\dZ}{{\cal Z \kern -0.7em Z}}
\newcommand{\dC}{{\rm\hbox{C \kern-0.8em\raise0.2ex\hbox{\vrule height5.4pt width0.7pt}}}}
\newcommand{\dQ}{{\rm\hbox{Q \kern-0.85em\raise0.25ex\hbox{\vrule height5.4pt width0.7pt}}}}
\newcommand{\Id}{\ensuremath{\operatorname{Id}}}
\newenvironment{retraitsimple}{\begin{list}{--~}{
 \topsep=0.3ex \itemsep=0.3ex \labelsep=0em \parsep=0em
 \listparindent=1em \itemindent=0em
 \settowidth{\labelwidth}{--~} \leftmargin=\labelwidth
}}{\end{list}}
\newcommand{\lqqd}{\hfill{$\Box$}\bigskip}
\newcommand{\NN}{\mathbb{N}}
\newcommand{\HH}{\mathcal{H}}
\newcommand{\RR}{\mathbbm{R}}
\newcommand{\PP}{\mathcal{P}}
\newcommand{\QQ}{\mathcal{Q}}
\newcommand{\dsty}{\displaystyle}
\begin{document}
\title{On the convergence of the {forward-backward} splitting method with linesearches
	%\footnote{This manuscript has not been published or submitted simultaneously for publication elsewhere.}
	}
\author{Jos\'e Yunier Bello Cruz\footnote{Department of Mathematical Sciences, Northern Illinois University, WH 366, DeKalb, IL - 60115, USA (E-mail: {\tt yunierbello@niu.edu}).}\and Tran T.A.
Nghia\footnote{Department of Mathematics and Statistics, Oakland
University, Rochester, MI. 48309, USA. E-mail: nttran@oakland.edu}}
\maketitle 
\vspace{-5.0mm}

\begin{abstract}
\noindent In this paper we focus on the convergence analysis of the
forward-backward splitting method for solving nonsmooth
optimization problems in Hilbert spaces when the objective function
is the sum of two convex functions. Assuming that one of the
functions is Fr\'echet differentiable and using two new
linesearches, the weak convergence is established without any
Lipschitz continuity assumption on the gradient. Furthermore, we obtain many complexity results of cost values at the
iterates when the stepsizes are bounded below by a positive
constant.
\end{abstract}

\medskip

\noindent{\bf Keywords:} Armijo-type linesearch; Iteration
complexity; Nonsmooth and convex optimization problems; Proximal
gradient splitting method.

\medskip

\noindent{\bf Mathematical Subject Classification (2010):} 65K05,
90C25, 90C30.
\section{ Introduction}
We are interested in solving problems of the following form:
\begin{equation}\label{prob}
\min \,f(x)+g(x) \;\mbox{ subject to } \;x\in \HH,
\end{equation} where $\HH$ is a real Hilbert space with the inner product $\la \cdot, \cdot\ra$, and $f,g:\HH\rightarrow
\,\overline{\RR}:=\RR\cup\{+\infty\}$ are two proper lower
semicontinuous convex functions in which $f$ is Fr\'echet
differentiable on  an open set containing the domain of $g$. The optimal solution set of
this problem will be denoted by $S_*$. Recently problem \eqref{prob}
together with many variants of it has received much attention
from optimization community due to its broad applications to many
disciplines such as optimal control, signal processing, system
identification, machine learning, and image analysis; see,
\emph{e.g.}, \cite{neal-boyd, comb-2005, comb-2011} and the references
therein. Many effective methods have been proposed to solve problem
\eqref{prob}. Most of them keep using the idea of splitting $f$ and
$g$ separately and taking the advantage of some Lipschitz assumption
on the derivative  of $f$ at each iteration. Here we focus our
attention on the so-called \emph{forward-backward splitting
method}, which contains a forward gradient step of $f$ (an explicit
step) followed by a backward proximal step of $g$ (an implicit step)
for problem \eqref{prob}; see, \emph{e.g.}, \cite{neal-boyd}. In this work linesearches are used to
eliminate the undesired Lipschitz assumption on the gradient of $f$ mostly imposed in the
literature.

To describe and motivate our methods, let us recall here  the
so-called proximal operator $\prox_g:=(\partial g+\Id)^{-1}$, where
$\partial g$ is the classical convex subdifferential of $g$ and
$\Id$ is the identity operator in  $\HH$. Among many important
properties of proximal operators, it is well-known that $\prox_g$ is
well-defined with full domain, single-valued, and even nonexpansive;
see, \emph{e.g.}, \cite{comb-2005,comb-2011, librobauch}.
Furthermore, for any $\alpha>0$, $x$ is an optimal solution to
problem \eqref{prob} if and only if $x=\prox_{\alpha
g}(x-\alpha\nabla f(x))$. This indeed motivates the construction of
the iterative sequence forming the forward-backward
iteration as following:
\begin{equation}\label{F-B-I}
x^{k+1}:=\prox_{\alpha_k g}(x^k-\alpha_k\nabla f(x^k))
\end{equation}
with  positive stepsize $\alpha_k$. The iteration presented in
\eqref{F-B-I} has been attracted extensive interests due to its
simplicity and  several important advantages. It is well-known that
this method uses little storage, readily exploits the separable
structure of problem \eqref{prob}, and is easily implemented to
practical applications; see \cite{neal-boyd, nesterov-2013,beck-teu}. Moreover, scheme
\eqref{F-B-I} may reduce to many popular optimization methods as
particular cases including the projected gradient method for smooth
constrained minimization; the proximal point method;  the CQ
algorithm for the split feasibility problem; the projected Landweber
algorithm for constrained least squares; the iterative soft
thresholding algorithm for linear inverse problems; decomposition
methods for solving variational inequalities; and the simultaneous
orthogonal projection algorithm for the convex feasibility problem;
see, \emph{e.g.}, \cite{eicke, comb-1994, dau, beck, Rock1976,
tseng1991, tseng1990} and the references therein.

The convergence of the iteration \eqref{F-B-I} to an optimal
solution of \eqref{prob} is usually established under the assumption
that the gradient of $f$ is Lipschitz continuous and the stepsize
$\alpha_k$ is taken bounded below and less than some constant related with the
Lipschitz modulus; see, \emph{e.g.},
\cite[Theorem~3.4(i)]{comb-2005}. In this case, the main machinery
to prove the convergence and its complexity  is based on the
renowned \emph{Baillon-Haddad Theorem}
\cite[Corollary~18.16]{librobauch}. When $\nabla f$ is  Lipschitz continuous but somehow the Lipschitz constant is not
known, finding the stepsize $\alpha_k$ that guarantees the
convergence of \eqref{F-B-I} would be a challenge. However, the
following linesearch proposed in \cite{beck-teu} overcome this
inconvenience: choosing the stepsize $\alpha_k$ in \eqref{F-B-I} as
the largest $\alpha \in
\{\sigma,\sigma\theta,\sigma\theta^2,\ldots,\}$ with constants
$\sigma>0$ and $ \theta\in(0,1)$ such that:
\begin{equation}\label{conda1}
f(J(x^k,\alpha))\le f(x^k)+\la\nabla
f(x^k),J(x^k,\alpha)-x^k\ra+\frac{1}{2\alpha}\|x^k-J(x^k,\alpha)\|^2,
\end{equation} where $J(x^k,\alpha):=\prox_{\alpha g}(x^k-\alpha\nabla f(x^k))$ and $\|\cdot\|$ is the norm induced by the inner product in
$\HH$.
This linesearch is well-defined by taking the
advantage of the Lipschitz assumption for $\nabla f$ again via the
so-called {\em Descent Lemma} \cite[Theorem~18.15(iii)]{librobauch}.
As far as we observe, the theory of convergence and complexity for
the forward-backward is almost complete under such a
Lipschitz assumption. However, the Lipschitz condition fails in many natural circumstances;
see, \emph{e.g.}, \cite{D-R-comb}. It is quite interesting to
question the convergence of the method and its complexity without
the Lipschitz assumption aforementioned. In \cite{tseng} Tseng
provided an evidence of positive answer even for more general
problems of finding a zero point of the sum of two  maximal monotone operators. His crucial approach motivates us to construct
{\bf Method~\ref{A1}} for problem \eqref{prob} in our Section 4. But working on the  functionals ($f$ and $g$) rather than just the maximal operators ($\nabla f$ and $\partial g$)
actually gives us much more convenience. Indeed, we completely relax an
(expensive) extra projection step from Tseng's scheme and omit several
unnatural assumptions in the main theorem \cite[Theorem~3.4]{tseng}.
Moreover, in the spirit of linesearch on functionals like
\eqref{conda1} and following some ideas presented in \cite{yun-wlo, tseng2001, tseng-yun}, we also introduce a new linesearch mainly used in
our {\bf Method~\ref{A2}} in Section 5. Both  {\bf Method~\ref{A1}} and {\bf
Method~\ref{A2}} guarantee weak convergence of their generated sequences to
optimal solutions without imposing the Lipschitz assumption on $\nabla
f$.

Another achievement of our work is the study on complexity of cost
values at generated sequences, which are proved to converge to the infimum value of problem \eqref{prob} even in the case when the set of optimal solutions is empty.  It is worth mentioning that in order to obtain the rate
$\mathcal{O}(k^{-1})$ of the functional value $(f+g)(x^k)$ to the optimal cost, the gradient $\nabla f$
is usually supposed to be globally Lipschitz continuous in the classical forward-backward iteration
\cite{neal-boyd, nesterov-2013, beck, beck-teu, comb-2005}. Here, in finite dimensions, we
derive the better rate $o(k^{-1})$ even with strictly weaker assumptions, for
instance,  $\nabla f$ only needs to be  {\em locally} Lipschitz continuous for our {\bf Method~\ref{A1}} and {\bf Method~\ref{A2}}. {This partially generalizes several results in \cite{Dong2, Dong3, guller}, in which the authors also derive the complexity $o(k^{-1})$ for proximal point method (when $f\equiv 0$).}  Moreover, we present an interesting example of problem \eqref{prob} with non-Lipschitz gradient where the stepsizes generated by both linesearches converge to zero and the complexity $o(k^{-1})$ of the cost values remains valid. Furthermore, the
rate $\mathcal{O}\left(k^{-2}\right)$ is also obtained for  our {\bf Method~\ref{A3}}, an accelerating version of  {\bf Method~\ref{A1}} motivated from \cite{beck-teu}. Again, global
Lipschitz continuity on $\nabla f$ is lessened.

The paper is organized as follows. The next section presents some
preliminary results that will be used throughout the paper. We also discuss here our standing assumptions for the problem which is somewhat natural for the lack of Lipschitz assumption  aforementioned. Section \ref{search} devotes to the two different
linesearches for the forward-backward methods used in Sections \ref{s:3} and \ref{Sec-new-4}. Weak convergence  and complexity of the
forward-backward method with the first linesearch are analyzed in Section \ref{s:3}. We also consider its accelerated version here.
Section \ref{Sec-new-4} provides a similar study for a variant of  the forward-backward splitting method with the  second linesearch. We complete the paper with some  conclusion for further study.

\section{Preliminary results}\label{prelim}
In this section we present some definitions and results needed for
our paper. Let $h:\HH\to \overline{\RR}$ be a proper, lower
semicontinuous (\emph{l.s.c.}), and convex function. We denote the
domain of $h$ by $\dom h:=\{x\in \HH\,|\; h(x)<+\infty\}$. For any
$x\in \dom h$, the directional derivative of $h$ at $x$ in the
direction $d$ is
$$\displaystyle
h^\prime(x;d) := \lim_{t \rightarrow 0^+} \frac{h(x + td) -
h(x)}{t},
$$
which always exists (although it may be infinite). The
subdifferential of $h$ at $x$ is defined by
\begin{equation}\label{sub-inq}
\partial h(x):=\{v\in \HH\,|\; \la v,y-x\ra\le h(y)-h(x),\; y\in \HH\}.
\end{equation}
\begin{fact}[{\cite[Proposition~17.2]{librobauch}} ]\label{prop-cons-g-convex}  Let $h: \HH\to\overline{\RR}$ be a proper, l.s.c., and convex function. Then, for $x\in \dom h$ and $y\in \HH$, the following hold:
\item {{\bf (i)}}  $h^\prime(x;y)$ exists and
$\dsty h^\prime(x;y)= \inf_{t\in \RR_{++}} \frac{h(x+t
y)-h(x)}{t}. $
\item {{\bf (ii)}} $h^\prime(x;y-x)+h(x)\le h(y).$
\end{fact}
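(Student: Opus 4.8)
The plan is to reduce both parts to a single structural fact: for a convex function the difference quotient along a fixed direction is monotone. Fix $x\in\dom h$ and $y\in\HH$, and set $\varphi(t):=\frac{h(x+ty)-h(x)}{t}$ for $t\in\RR_{++}$. First I would show that $\varphi$ is nondecreasing. Given $0<s<t$, I write $x+sy$ as the convex combination $(1-\tfrac{s}{t})\,x+\tfrac{s}{t}\,(x+ty)$ and apply convexity of $h$ to obtain $h(x+sy)\le(1-\tfrac{s}{t})h(x)+\tfrac{s}{t}h(x+ty)$. Subtracting $h(x)$ and dividing by $s>0$ gives $\varphi(s)\le\varphi(t)$, which is precisely the monotonicity claimed.

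Once monotonicity is in hand, part (i) is immediate. A nondecreasing function on $\RR_{++}$ always has a limit as $t\to0^+$ (possibly $-\infty$), and for such a function that limit coincides with the infimum over $\RR_{++}$. Therefore $h^\prime(x;y)=\lim_{t\to0^+}\varphi(t)$ exists in $\overline{\RR}$ and equals $\inf_{t\in\RR_{++}}\frac{h(x+ty)-h(x)}{t}$, as asserted.

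For part (ii) I would simply specialize (i) to the direction $d=y-x$ and bound the infimum by its value at the admissible point $t=1$. Since an infimum is no larger than any individual term, $h^\prime(x;y-x)=\inf_{t\in\RR_{++}}\frac{h(x+t(y-x))-h(x)}{t}\le h(x+(y-x))-h(x)=h(y)-h(x)$, and rearranging yields $h^\prime(x;y-x)+h(x)\le h(y)$.

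The only genuine content here is the monotonicity of $\varphi$; everything else is bookkeeping. The one mild subtlety I expect is the extended-real arithmetic: $\varphi$ may take the value $+\infty$ when $h(x+ty)=+\infty$ for some $t$, and the limit in (i) may be $-\infty$. Both cases are absorbed by the monotonicity argument and the usual conventions for $\overline{\RR}$, so no essential obstacle arises.
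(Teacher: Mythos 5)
Your proof is correct. The paper does not prove this statement at all --- it is quoted as a known fact from Bauschke--Combettes \cite[Proposition~17.2]{librobauch} --- and your argument (monotonicity of the difference quotient $t\mapsto\frac{h(x+ty)-h(x)}{t}$ via the convex-combination identity $x+sy=(1-\tfrac{s}{t})x+\tfrac{s}{t}(x+ty)$, then limit equals infimum, then $t=1$ for part (ii)) is precisely the standard proof given in that reference; the extended-real subtleties you flag are indeed harmless since $h(x)$ is finite for $x\in\dom h$, so subtracting $h(x)$ and rearranging are legitimate even when $h(x+ty)=+\infty$ or the infimum is $-\infty$.
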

\begin{fact}[{\cite[Theorem~4.7.1 and Proposition~4.2.1(i)]{iusem-regina}} ]\label{teo-p1}
The subdifferential operator $\partial h$ is maxi\-mal monotone,
i.e., it has no proper monotone extension in the graph
inclusion sense. Moreover, the graph of $\partial h$, ${\rm
Gph}(\partial h):=\{(x,v)\in\HH\times \HH\,|\; v\in\partial h(x)\}$
is demiclosed, i.e., if the sequence $(x^k,v^k)_{k\in
\NN}\subset {\rm Gph}(\partial h)$ satisfies that  $(x^k)_{k\in\NN}$
converges weakly to $x$ and $(v^k)_{k\in\NN}$ converges strongly to
$v$, then $(x,v)\in {\rm Gph }(\partial h)$.
\end{fact}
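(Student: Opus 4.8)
The plan is to handle the two assertions---maximal monotonicity and demiclosedness---in sequence, deriving the second from the first.

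\emph{Monotonicity and maximality.} Monotonicity would follow at once from the subgradient inequality \eqref{sub-inq}: if $u\in\partial h(x)$ and $w\in\partial h(y)$, then adding $\ip{u}{y-x}\le h(y)-h(x)$ and $\ip{w}{x-y}\le h(x)-h(y)$ yields $\ip{u-w}{x-y}\ge 0$. For maximality I would invoke Minty's characterization: a monotone operator on a Hilbert space is maximal exactly when ${\rm Range}(\Id+\partial h)=\HH$. So it would suffice to show that for every $y\in\HH$ there is some $x$ with $y-x\in\partial h(x)$. I would produce such an $x$ as the unique minimizer of $z\mapsto h(z)+\tfrac12\N{z-y}^2$, which is proper, l.s.c., strongly convex and coercive; Fermat's rule together with the subdifferential sum rule then gives $0\in\partial h(x)+(x-y)$, i.e.\ $y-x\in\partial h(x)$. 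This minimizer is precisely $x=\prox_h(y)$, tying the argument to the proximal operator already used in the paper.

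\emph{Demiclosedness.} Rather than work from the definition, I would deduce this from the maximality just established. Given $(x^k,v^k)\in{\rm Gph}(\partial h)$ with $x^k\rightharpoonup x$ and $v^k\to v$, monotonicity gives, for every test pair $(y,w)\in{\rm Gph}(\partial h)$,
\begin{equation*}
\ip{v^k-w}{x^k-y}\ge 0 .
\end{equation*}
The only delicate term when passing to the limit is the bilinear pairing $\ip{v^k}{x^k}$, which couples a strongly convergent factor with a weakly convergent one. Writing $\ip{v^k}{x^k}-\ip{v}{x}=\ip{v^k-v}{x^k}+\ip{v}{x^k-x}$ and using that weakly convergent sequences are bounded, both summands vanish in the limit, so $\ip{v^k}{x^k}\to\ip{v}{x}$; the remaining terms converge by weak, respectively strong, convergence. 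I would thereby reach $\ip{v-w}{x-y}\ge 0$ for all $(y,w)\in{\rm Gph}(\partial h)$, and maximality would then force $(x,v)\in{\rm Gph}(\partial h)$.

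The hard part will be the maximality, since it rests on the existence of the minimizer of the Moreau envelope; in an infinite-dimensional Hilbert space this existence uses weak sequential compactness of bounded sets together with weak lower semicontinuity of the proper l.s.c.\ convex objective, which is exactly where the Hilbert-space structure enters. Once maximality is in place, the demiclosedness reduces to a short limiting argument whose sole subtlety is the weak-strong convergence of $\ip{v^k}{x^k}$.
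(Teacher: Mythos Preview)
The paper does not prove this statement; it is quoted as a Fact from \cite{iusem-regina} with no argument supplied. So there is no ``paper's own proof'' to compare against---your task was really to give a self-contained justification of a cited result.

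Your argument is correct and is essentially the standard route. A few remarks. For maximality you invoke Minty's characterization, but note that you only need the easy direction (surjectivity of $\Id+\partial h$ implies maximality), which is a three-line computation; the converse is deeper and would make the appeal somewhat circular, so it is worth saying explicitly that only the elementary half is used. The existence of the minimizer of $z\mapsto h(z)+\tfrac12\|z-y\|^2$ is exactly where the Hilbert structure enters, as you correctly identify, via weak compactness of bounded sets and weak lower semicontinuity of proper l.s.c.\ convex functions. The subdifferential sum rule you invoke is legitimate because the quadratic term is everywhere continuous, so Moreau--Rockafellar applies; alternatively one can avoid the sum rule entirely by a direct first-variation argument along $z_t=x+t(z-x)$. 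Your demiclosedness argument is clean: the decomposition $\ip{v^k}{x^k}-\ip{v}{x}=\ip{v^k-v}{x^k}+\ip{v}{x^k-x}$ together with boundedness of weakly convergent sequences handles the only nontrivial term, and then maximality closes the graph. This is precisely the sequential weak--strong closedness of graphs of maximal monotone operators, which is what the paper cites as \cite[Proposition~4.2.1(i)]{iusem-regina}.
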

\noindent Next we set  the standing assumptions on the data of problem
\eqref{prob} used throughout the paper as follows:
\begin{enumerate}[leftmargin=0.3in, label={\bf A\arabic*}]
\item\label{a1} $f,g:\HH\to\overline{\RR}$ are two proper
\emph{l.s.c.} convex functions with $\dom g\subseteq \dom f$.
\item\label{a2} The function $f$ is Fr\'echet differentiable on an open set containing $\dom g$.
The gradient $\nabla f$  is uniformly continuous on any
bounded subset of $\dom g$ and maps any bounded subset of $\dom g$ to a bounded set in $\HH$.
\end{enumerate}
Assumption \ref{a1} and the first part of Assumption \ref{a2} are popular and crucial for the well-definedness
of the forward-backward iteration \eqref{F-B-I}. It
is easy to check that the second part of \ref{a2} is automatic when $\nabla f$ is Lipschitz
continuous on $\dom g$. However, Assumption~{\bf A2} is not enough to guarantee the Lipschitz continuity of $\nabla f$.  Indeed,  the convex functions
$f(x)\equiv\|x\|^{p}$ ($1<p<+\infty$, $p\neq2$) and $g(x)\equiv 0$,  $x\in \HH$ satisfy
all the conditions in  \ref{a2} but $\nabla f$ is not globally
Lipschitz continuous. When $\HH$ is a finite-dimensional space and
the domain of $g$  is closed, Assumption \ref{a2} actually means
that $f$ is Fr\'echet differentiable on  an open set containing $\dom g$ and that its gradient is continuous on $\dom g$.
It is worth noting further that the closedness of $\dom g$ is broadly assumed for problem \eqref{prob} in the literature
 including the case of optimization problems with geometric constraints,  which can be written as \eqref{prob} when  $g$ is an indicator function; see,
\emph{e.g.}, \cite{neal-boyd}.

\begin{proposition}\label{pop} Let $\HH$ be a finite-dimensional space and let $f, g :\HH \rightarrow\,\overline{\RR}$ be two functions satisfying \ref{a1}.
Suppose that  the closure of $\dom g$, denoted by ${\rm cl}\, (\dom g)$ is a subset of  $\dom f$, $f$ is Fr\'echet differentiable on  an open set containing ${\rm cl}\, (\dom g)$, and that its gradient $\nabla f$ is continuous on ${\rm cl}\, (\dom g)$. Then Assumption \ref{a2} is satisfied.

Consequently, if $\dom g$ is closed then the validity of Assumption \ref{a2} is equivalent to the statement that  $f$ is Fr\'echet differentiable on  an open set containing $\dom g$ and its gradient is continuous on $\dom g$.
\end{proposition}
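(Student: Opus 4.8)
The plan is to leverage the finite dimensionality of $\HH$ through two classical facts: the Heine--Borel theorem (a set is compact iff it is closed and bounded) and the Heine--Cantor theorem (a continuous map on a compact set is uniformly continuous, and has compact, hence bounded, image). The whole argument is then a matter of passing from a bounded set to its compact closure and restricting the already-given continuous gradient there.

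First I would dispatch the differentiability clause of Assumption~\ref{a2}: since $f$ is assumed Fr\'echet differentiable on an open set $U\supseteq {\rm cl}\,(\dom g)\supseteq \dom g$, the same $U$ witnesses differentiability of $f$ on an open set containing $\dom g$, so that part of \ref{a2} is immediate. For the two remaining clauses, let $B\subseteq \dom g$ be an arbitrary bounded set and pass to ${\rm cl}\,(B)$. Because $\HH$ is finite-dimensional and $B$ is bounded, ${\rm cl}\,(B)$ is compact; moreover $B\subseteq\dom g$ forces ${\rm cl}\,(B)\subseteq {\rm cl}\,(\dom g)$, precisely the set on which $\nabla f$ is assumed continuous. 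Restricting $\nabla f$ to the compact set ${\rm cl}\,(B)$, Heine--Cantor yields uniform continuity of $\nabla f$ on ${\rm cl}\,(B)$, hence on the subset $B$; likewise $\nabla f({\rm cl}\,(B))$ is compact, so $\nabla f(B)\subseteq\nabla f({\rm cl}\,(B))$ is bounded in $\HH$. Since $B$ was arbitrary, both the uniform-continuity and bounded-image requirements of \ref{a2} follow, completing the first assertion.

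For the ``Consequently'' part, assume $\dom g$ is closed, so that ${\rm cl}\,(\dom g)=\dom g$. The forward implication is nothing but the first part of the proposition applied with ${\rm cl}\,(\dom g)=\dom g$. For the converse I would assume \ref{a2} and note that differentiability of $f$ on an open set containing $\dom g$ is literally part of \ref{a2}, so only continuity of $\nabla f$ on $\dom g$ needs an argument. Using that continuity is a local property, I would fix $x_0\in\dom g$ and apply \ref{a2} to the bounded subset $B:=\dom g\cap\{x : \|x-x_0\|\le 1\}$, on which $\nabla f$ is uniformly continuous and hence continuous; this gives continuity of $\nabla f$ at $x_0$ relative to $\dom g$, and since $x_0$ is arbitrary, $\nabla f$ is continuous on $\dom g$.

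The argument packages standard compactness facts, so I do not expect a deep obstacle; the only points requiring care are the inclusion ${\rm cl}\,(B)\subseteq {\rm cl}\,(\dom g)$, which keeps the analysis inside the region where $\nabla f$ is known to be continuous, and, in the converse direction, correctly extracting pointwise continuity on $\dom g$ from the ``uniform continuity on bounded subsets'' formulation of \ref{a2} rather than presuming it outright.
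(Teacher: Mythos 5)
Your proposal is correct and follows essentially the same route as the paper: pass from a bounded subset of $\dom g$ to its compact closure inside ${\rm cl}\,(\dom g)$, invoke Heine--Cantor for uniform continuity and compactness of the continuous image for boundedness, then specialize to ${\rm cl}\,(\dom g)=\dom g$ for the equivalence. The only difference is that you spell out, via the local-ball argument, the implication (Assumption~\ref{a2} $\Rightarrow$ continuity of $\nabla f$ on $\dom g$) that the paper dismisses as ``easy to see,'' which is a harmless and correct elaboration.
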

\begin{proof} To justify, suppose that $\dim \HH<+\infty$, ${\rm cl}\, (\dom g)\subseteq X\subseteq \dom f$, $f$ is Fr\'echet differentiable on  an open set containing ${\rm cl}\, (\dom g)$, and that $\nabla f$ is continuous on ${\rm cl}\, (\dom g)$.
Take any bounded set $A$ of $\dom g$. Note that  $\nabla f$ is uniformly
continuous on the compact set ${\rm cl}\, A\subseteq {\rm cl}\, (\dom g)$ and thus on
$A$ due to the classical \emph{Heine-Cantor Theorem}.
Since $\nabla f$ is continuous on ${\rm cl}\, (\dom g)$, it maps the
compact set ${\rm cl}\, A\subseteq {\rm cl}\, (\dom g)$ to a compact set in $\HH$.
This verifies that $\nabla f(A)$ is bounded and completes the first
part of the proposition.

Now suppose that $\dom g$ is closed. It is easy to see that the validity of Assumption \ref{a2} implies that $\nabla f$ is continuous on $\dom g$.  This together with the first part of this proposition justifies the second part. The proof is completed.
\end{proof}
Let us recall the proximal operator
$\prox_g:\HH\to \dom g$ with $\prox_g(z)=(\Id+\partial g)^{-1}(z)$, $z\in \HH$. It is well-known that the proximal operator is single-valued with full domain. Furthermore, note that
\begin{equation}\label{in-sub}
\frac{z-\prox_{\alpha g}(z)}{\al}\in \partial g(\prox_{\al g}(z))\quad\mbox{for all} \quad z\in\HH,\, \al\in  \RR_{++}:=\{t\in\RR|\; t>0\}.
\end{equation}
We also denote the {\em forward-backward operator} $J:{\dom
g}\times \RR_{++}\to \dom g\subset \HH$   by
\begin{equation}\label{J}
J(x,\alpha):=\prox_{\alpha g}(x-\alpha \nabla f(x))\quad \mbox{for
all}\quad x\in {\dom
g \subseteq} \;\dom f,\,\al>0.
\end{equation}
The following lemma is very useful for our further study.
\begin{lemma}[{{\cite[Lemma 1]{Dong1}}}]\label{lema-prop}
Let $f, g :\HH \rightarrow\, \overline{\RR}$ be two functions
satisfying Assumption \ref{a1}. Then for any $x\in {\dom
	g}$ and
$\alpha_2\ge \alpha_1>0$, we have
\begin{equation}\label{ineq-1}
\frac{\alpha_2}{\alpha_1}
\|x-J(x,\alpha_1)\|\ge\|x-J(x,\alpha_2)\|\ge \|x-J(x,\alpha_1)\|.
\end{equation}
\end{lemma}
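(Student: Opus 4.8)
The plan is to reduce both inequalities in \eqref{ineq-1} to a single application of the monotonicity of $\partial g$, with no smoothness or Lipschitz input whatsoever. First I would fix $x\in\dom g$ and abbreviate $u_i:=J(x,\alpha_i)$ for $i=1,2$, together with the displacement vectors $d_i:=x-u_i$. Applying the inclusion \eqref{in-sub} to the point $z=x-\alpha_i\nabla f(x)$ and recalling \eqref{J}, I obtain a subgradient
\[
v_i:=\frac{x-\alpha_i\nabla f(x)-u_i}{\alpha_i}=\frac{d_i}{\alpha_i}-\nabla f(x)\in\partial g(u_i),\qquad i=1,2.
\]
The crucial observation is that $\nabla f$ is evaluated at the \emph{same} fixed point $x$ in both cases, so the gradient term cancels in the difference $v_1-v_2=\frac{d_1}{\alpha_1}-\frac{d_2}{\alpha_2}$.

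Next I would invoke the monotonicity of $\partial g$ guaranteed by Fact~\ref{teo-p1}: since $v_i\in\partial g(u_i)$, we have $\la v_1-v_2,\,u_1-u_2\ra\ge0$. Using $u_1-u_2=d_2-d_1$ and expanding the inner product, this inequality becomes
\[
\Big(\tfrac{1}{\alpha_1}+\tfrac{1}{\alpha_2}\Big)\la d_1,d_2\ra\ \ge\ \tfrac{1}{\alpha_1}\N{d_1}^2+\tfrac{1}{\alpha_2}\N{d_2}^2.
\]
I would then bound the left-hand side from above by Cauchy--Schwarz, $\la d_1,d_2\ra\le\N{d_1}\N{d_2}$, and introduce the shorthand $a:=\N{d_1}$, $b:=\N{d_2}$, $s:=1/\alpha_1\ge t:=1/\alpha_2>0$ (the ordering coming from $\alpha_2\ge\alpha_1$). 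This yields the scalar quadratic inequality $s\,a^2-(s+t)ab+t\,b^2\le0$, which factors cleanly as $s(a-b)\big(a-\tfrac{t}{s}b\big)\le0$.

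Finally, since $s>0$ and $\tfrac{t}{s}=\alpha_1/\alpha_2\le1$, the two roots are ordered $\tfrac{t}{s}b\le b$, so the factored inequality forces $a$ to lie between them: $\tfrac{\alpha_1}{\alpha_2}\N{d_2}\le\N{d_1}\le\N{d_2}$. The right-hand bound reads $\N{x-J(x,\alpha_1)}\le\N{x-J(x,\alpha_2)}$, which is the second inequality of \eqref{ineq-1}; rearranging the left-hand bound gives $\N{x-J(x,\alpha_2)}\le\tfrac{\alpha_2}{\alpha_1}\N{x-J(x,\alpha_1)}$, which is the first. I expect the only subtle point to be recognizing that the term $\nabla f(x)$ cancels (so monotonicity applies without any regularity assumption on $\nabla f$) and then factoring the quadratic correctly to read off both bounds simultaneously; everything else is routine expansion and Cauchy--Schwarz.
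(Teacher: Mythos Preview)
Your proof is correct. Note that the paper does not supply its own proof of this lemma---it is quoted verbatim from \cite[Lemma~1]{Dong1}---so there is no in-paper argument to compare against; your approach via monotonicity of $\partial g$, cancellation of the common $\nabla f(x)$ term, and the quadratic factorization $(a-b)(sa-tb)\le 0$ is the standard one and is essentially the argument in the cited reference.
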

%\begin{proof}
%By using \eqref{in-sub} with $z=x-\alpha \nabla f(x)$ and $x\in \dom
%f$, we have
%\begin{equation}\label{inc-1}
%\frac{x-\alpha \nabla f(x)-J(x,\alpha)}{\alpha}\in \partial g(J(x,\alpha))
%\end{equation} for all $\alpha>0$. Take any $\alpha_2\ge \alpha_1>0$, it follows from the monotonicity of $\partial g$ and \eqref{inc-1} that
%\begin{align*}
%0\le &\left \la \frac{x-\alpha_2 \nabla
%f(x)-J(x,\alpha_2)}{\alpha_2}-\frac{x-\alpha_1 \nabla
%f(x)-J(x,\alpha_1)}{\alpha_1},J(x,\alpha_2)-J(x,\alpha_1)\right\ra\\
%=&\left\la
%\frac{x-J(x,\alpha_2)}{\alpha_2}-\frac{x-J(x,\alpha_1)}{\alpha_1},\left(x-J(x,\alpha_1)\right)-\left(x-J(x,\alpha_2)\right)\right\ra\\
%=&-\frac{\|x-J(x,\alpha_2)\|^2}{\alpha_2}-\frac{\|x-J(x,\alpha_1)\|^2}{\alpha_1}+\left(\frac{1}{\alpha_2}+\frac{1}{\alpha_1}\right)\la
%x-J(x,\alpha_2),x-J(x,\alpha_1)\ra\\ \le&
%-\frac{\|x-J(x,\alpha_2)\|^2}{\alpha_2}-\frac{\|x-J(x,\alpha_1)\|^2}{\alpha_1}+\left(\frac{1}{\alpha_2}+\frac{1}{\alpha_1}\right)\|x-J(x,\alpha_2)\|\cdot\|x-J(x,\alpha_1)\|,
%\end{align*}
%which imply the following expression
%$$
%\big(\|x-J(x,\alpha_2)\|-\|x-J(x,\alpha_1)\|\big)\cdot\Big(\|x-J(x,\alpha_2)\|-\frac{\al_2}{\al_1}\|x-J(x,\alpha_1)\|\Big)\le 0.
%$$
%Since $\dsty\frac{\alpha_2}{\alpha_1}\ge 1$, we derive
%\eqref{ineq-1} and thus complete the proof of the lemma.
%\end{proof}
Let us end the section by recalling the well-known concepts of
so-called quasi-Fej\'er and Fej\'er convergence. The definition
originates in \cite{Ermolev} and has been elaborated further in
\cite{IST,comb-2001}.
\begin{definition}\label{def-fejer}
Let $S$ be a nonempty subset of $\HH$. A sequence $(x^k)_{k\in \NN}$
in $\HH$ is said to be quasi-Fej\'er convergent to $S$ if and only
if for all $x \in S$ there exists a sequence $(\epsilon_k)_{k\in
\NN}$ in $\RR_+$ such that $\sum_{k=0}^\infty\epsilon_k<+\infty$ and
$\| x^{k+1}-x\|^2 \leq \| x^{k}-x\|^2 +\epsilon_k$ for all
$k\in\NN$. When $(\epsilon_k)_{k\in \NN}$ is a null sequence, we say
that $(x^k)_{k\in \NN}$ is Fej\'er convergent to $S$.
\end{definition}
\begin{fact}[{\cite[Theorem~4.1]{IST}} ]\label{lema-Fejer} If
$(x^k)_{k\in\NN}$ is quasi-Fej\'er convergent to $S$, then one has:
\item {\bf (i)} The sequence $(x^k)_{k\in\NN}$ is bounded.

\item {\bf (ii)} If all weak accumulation points of $(x^k)_{k\in\NN}$ belong to $S$,
then $(x^k)_{k\in\NN}$ is weakly convergent to a point in $S$.
\end{fact}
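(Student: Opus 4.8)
The plan is to establish the two assertions separately, using (i) as the springboard for (ii). For (i), I would fix an arbitrary $x\in S$ together with the summable sequence $(\epsilon_k)_{k\in\NN}$ supplied by Definition \ref{def-fejer}, and simply telescope the defining inequality: iterating $\|x^{k+1}-x\|^2\le \|x^k-x\|^2+\epsilon_k$ produces $\|x^k-x\|^2\le \|x^0-x\|^2+\sum_{j=0}^{\infty}\epsilon_j<+\infty$ for every $k$, so $(x^k)_{k\in\NN}$ stays in a bounded ball around $x$ and is therefore bounded.

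The core of (ii) is to show that $\lim_{k\to\infty}\|x^k-x\|$ exists for each fixed $x\in S$. Here I would introduce the tail sums $t_k:=\sum_{j\ge k}\epsilon_j$, which decrease to $0$ and satisfy $t_{k+1}=t_k-\epsilon_k$, and then observe that the sequence $a_k+t_k$ with $a_k:=\|x^k-x\|^2$ is nonincreasing, because
\[
a_{k+1}+t_{k+1}\le a_k+\epsilon_k+t_{k+1}=a_k+t_k.
\]
A nonincreasing sequence bounded below by $0$ converges, and since $t_k\to 0$ this forces $a_k$, hence $\|x^k-x\|$, to converge. This ``Fej\'er monotonicity up to a summable perturbation'' is the structural fact that makes everything work.

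With this in hand, boundedness from (i) and reflexivity of $\HH$ guarantee that $(x^k)_{k\in\NN}$ has at least one weak accumulation point, and by hypothesis each such point lies in $S$. The main obstacle---and the only genuinely nontrivial step---is to rule out two distinct weak accumulation points. Suppose $\bar x_1,\bar x_2\in S$ are weak limits along subsequences $x^{k_i}\rightharpoonup\bar x_1$ and $x^{l_j}\rightharpoonup\bar x_2$. I would expand
\[
\|x^k-\bar x_1\|^2-\|x^k-\bar x_2\|^2=2\la x^k,\bar x_2-\bar x_1\ra+\|\bar x_1\|^2-\|\bar x_2\|^2,
\]
note that the left side converges because both distance limits exist by the core claim, conclude that $\la x^k,\bar x_2-\bar x_1\ra$ converges along the \emph{whole} sequence, and finally evaluate that single limit along the two subsequences to obtain $\la\bar x_1,\bar x_2-\bar x_1\ra=\la\bar x_2,\bar x_2-\bar x_1\ra$, i.e. $\|\bar x_1-\bar x_2\|^2=0$. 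Thus the weak accumulation point is unique, and a bounded sequence in a reflexive space with a unique weak accumulation point converges weakly to it, that limit lying in $S$. The delicate point to handle carefully is that weak convergence only yields $\la x^{k_i},z\ra\to\la\bar x_1,z\ra$ for a \emph{fixed} test vector $z$, so the identity above must be arranged so that $\bar x_2-\bar x_1$ is held fixed while the limit is taken; this is the essence of the Opial-type argument.
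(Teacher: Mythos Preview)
Your argument is correct and is essentially the standard proof of this result. Note, however, that the paper does not supply its own proof of Fact~\ref{lema-Fejer}: the statement is quoted from \cite[Theorem~4.1]{IST} and used as a black box, so there is no in-paper argument to compare against. What you have written is precisely the classical route---telescoping for boundedness, the tail-sum trick to upgrade quasi-Fej\'er to convergence of each distance $\|x^k-x\|$, and then the Opial-type polarization identity to force uniqueness of the weak cluster point---which is the approach one finds in the cited reference and in the broader literature on (quasi-)Fej\'er monotonicity (e.g., \cite{comb-2001}). One minor remark: Definition~\ref{def-fejer} allows the error sequence $(\epsilon_k)$ to depend on the chosen $x\in S$, and your proof handles this correctly since you fix $x$ before invoking the telescoping and tail-sum arguments; it is worth making that dependence explicit in a final write-up.
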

\section{The linesearches}\label{search}
In this section we present two different linesearches mainly used in
the forward-backward methods proposed in Sections \ref{s:3}
and \ref{Sec-new-4}. The first one contains a backtracking procedure
 which computes {\em at least one} backward step (implicit
step) inside the updating inner loop for finding the steplength.
This linesearch is a particular case of the one proposed in
\cite{tseng} for solving inclusion problems. It will be used in {\bf
Method~\ref{A1}} and {\bf Method \ref{A3}} of Section~\ref{s:3}.
\begin{center}\fbox{\begin{minipage}[b]{\textwidth}
\begin{Line}\label{boundary} Given $x$, $\sigma>0$, $\theta\in (0,1)$
and $\delta\in (0,1/2)$.\\
{\bf Input.} Set $\alpha=\sigma$ and $J(x,\alpha):=\prox_{\alpha
g}(x-\alpha \nabla f(x))$ with $x\in \dom g$.
\begin{retraitsimple}
\item[] {\bf While} $
\alpha\left\|\nabla f\big(J(x,\alpha)\big)-\nabla
f(x)\right\|>\delta\left\|J(x,\alpha)-x\right\|$   {\bf do}\\
$\alpha=\theta \alpha$.
\item[] {\bf End While}
\end{retraitsimple}
{\bf Output.} $\alpha$.
\end{Line}\end{minipage}}\end{center}
The well-definedness of {\bf Linesearch \ref{boundary}} follows from
\cite[Theorem~3.4(a)]{tseng}. For the reader's convenience, we
provide a different proof revealing that  the convexity of $f$ is
not necessary.
\begin{lemma}\label{boundary-well} If $x\in \dom g$ then {\bf Linesearch~\ref{boundary}} stops after finitely many steps.
\end{lemma}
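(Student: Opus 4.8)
The plan is to argue by contradiction. If the \textbf{While} loop never terminates, then for the entire sequence of trial stepsizes $\alpha_j:=\sigma\theta^j$, $j\in\NN$, the loop test stays true, namely
\[
\alpha_j\,\N{\nabla f(J(x,\alpha_j))-\nabla f(x)} > \delta\,\N{J(x,\alpha_j)-x}\quad\text{for all }j\in\NN,
\]
and since $\theta\in(0,1)$ we have $\alpha_j\downarrow 0$. First I would dispose of the trivial case: if $J(x,\sigma)=x$ then both sides of the test vanish already at $\alpha=\sigma$ and the loop halts at once, so I may assume $\N{J(x,\sigma)-x}>0$. The strategy is then to show that, after dividing by $\alpha_j$, the left-hand factor $\N{\nabla f(J(x,\alpha_j))-\nabla f(x)}$ tends to $0$ while the right-hand quotient stays bounded away from $0$, which is the contradiction sought.

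The main obstacle, and essentially the only place requiring real work, is to prove that $J(x,\alpha)\to x$ as $\alpha\downarrow0$; once this limit is available the rest is soft. To obtain it I would use \eqref{in-sub} at $z=x-\alpha\nabla f(x)$, which says $v_\alpha:=\alpha^{-1}(x-J(x,\alpha))-\nabla f(x)\in\partial g(J(x,\alpha))$. Applying the subgradient inequality \eqref{sub-inq} for $v_\alpha$ at the test point $x$, and bounding $g(J(x,\alpha))$ below by an affine minorant $g(\cdot)\ge\la a,\cdot\ra+b$ of the proper \emph{l.s.c.} convex function $g$, a short Cauchy--Schwarz computation yields an estimate of the form
\[
\frac{1}{\alpha}\N{x-J(x,\alpha)}^2 \le C + M\,\N{x-J(x,\alpha)},
\]
where $C=g(x)-\la a,x\ra-b\ge0$ is finite because $x\in\dom g$, and $M\ge0$ depends only on $x,a,b$ and $\N{\nabla f(x)}$. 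Solving this quadratic inequality for $\N{x-J(x,\alpha)}$ gives $\N{x-J(x,\alpha)}=\mathcal{O}(\sqrt{\alpha})$, hence $J(x,\alpha)\to x$. (Alternatively one could simply invoke the standard fact that $\prox_{\alpha g}(x)\to x$ for $x\in\dom g$, combine it with the nonexpansiveness of $\prox_{\alpha g}$ and with $x-\alpha\nabla f(x)\to x$.) I note that this step uses only the convexity of $g$, not of $f$, which is what makes the claimed independence from convexity of $f$ possible.

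To finish, I would first observe via Lemma~\ref{lema-prop} that $\N{x-J(x,\alpha_j)}\le\N{x-J(x,\sigma)}$ for all $j$, so the points $J(x,\alpha_j)$ together with $x$ lie in a bounded subset of $\dom g$. Since $J(x,\alpha_j)\to x$, Assumption~\ref{a2} (uniform continuity of $\nabla f$ on bounded subsets of $\dom g$) forces $\N{\nabla f(J(x,\alpha_j))-\nabla f(x)}\to0$. On the other hand, dividing the loop test by $\alpha_j$ and using the monotonicity $\alpha_j^{-1}\N{x-J(x,\alpha_j)}\ge\sigma^{-1}\N{x-J(x,\sigma)}$ also furnished by Lemma~\ref{lema-prop}, I get
\[
\N{\nabla f(J(x,\alpha_j))-\nabla f(x)} > \delta\,\frac{\N{x-J(x,\alpha_j)}}{\alpha_j} \ge \delta\,\frac{\N{x-J(x,\sigma)}}{\sigma} > 0
\]
for every $j$, a fixed positive lower bound. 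This contradicts the convergence to $0$ just established, so the loop must stop after finitely many steps. Note that the restriction $\delta\in(0,1/2)$ plays no role in this argument and is only needed later in the convergence analysis.
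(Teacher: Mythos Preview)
Your proof is correct, but the mechanism by which you reach the contradiction is genuinely different from the paper's. The paper's argument bootstraps the failed linesearch inequality itself: from boundedness of $J(x,\alpha)$ (Lemma~\ref{lema-prop}) and the boundedness part of Assumption~\ref{a2} it first gets $\|x-J(x,\alpha)\|\to0$, then $\|\nabla f(J(x,\alpha))-\nabla f(x)\|\to0$, and finally $\alpha^{-1}\|x-J(x,\alpha)\|\to0$; it then passes to the limit in the inclusion $\alpha^{-1}(x-J(x,\alpha))\in\nabla f(x)+\partial g(J(x,\alpha))$ via the demiclosedness of ${\rm Gph}(\partial g)$ (Fact~\ref{teo-p1}) to conclude $0\in\nabla f(x)+\partial g(x)$, contradicting $x\notin S_*$. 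You instead establish $J(x,\alpha)\to x$ \emph{independently} of the contradiction hypothesis (via the affine minorant of $g$, yielding the quantitative bound $\|x-J(x,\alpha)\|=\mathcal{O}(\sqrt\alpha)$), and then use the \emph{left} inequality in Lemma~\ref{lema-prop} to bound $\alpha_j^{-1}\|x-J(x,\alpha_j)\|$ \emph{below} by the fixed positive number $\sigma^{-1}\|x-J(x,\sigma)\|$, which collides directly with $\|\nabla f(J(x,\alpha_j))-\nabla f(x)\|\to0$. Your route is more elementary in that it avoids Fact~\ref{teo-p1} entirely and does not use the boundedness clause of~\ref{a2}; the paper's route, on the other hand, ties the failure of the linesearch directly to optimality of $x$, which is conceptually pleasing and foreshadows the pattern used later in Theorems~\ref{new-cov} and~\ref{ptos-de-acum2}.
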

\begin{proof} If $x\in S_*$ then $x=J(x,\sigma)$. Thus the linesearch stops with zero step and gives us the output $\sigma$.
If $x\notin S_*$, by contradiction suppose that  for all $\alpha\in\PP:=\{\sigma, \sigma\theta, \sigma\theta^2, \ldots \}$,
\begin{equation}\label{no-conda}
\alpha\left\|\nabla f\big(J(x,\alpha)\big)-\nabla f(x)\right\|>\delta\left\|J(x,\alpha)-x\right\|.
\end{equation}
When $\alpha\in \PP$ is sufficiently closed to $0$, it follows from
Lemma~\ref{lema-prop} that $J(x,\al)$ is uniformly bounded. Thus we
get from  \eqref{no-conda} that $ \|x-J(x,\alpha)\|\to 0$ as
$\al\downarrow 0$ thanks to Assumption \ref{a2}. The latter implies
$\|\nabla f\big(J(x,\alpha)\big)-\nabla f(x)\|\to0$ when
$\al\downarrow  0$ by Assumption \ref{a2} again. Thus we get from \eqref{no-conda}  that
\begin{equation}\label{eq-1.1}
\lim_{\alpha\downarrow  0}\frac{\|x-J(x,\alpha)\|}{\alpha}=0.
\end{equation}
Employing  \eqref{in-sub} with $z=x-\alpha \nabla f(x)$ gives us that
$$\displaystyle
\frac{x-J(x,\alpha)}{\alpha}\in \nabla
f(x)+\partial g(J(x,\alpha)).
$$
 By letting  $\alpha\downarrow 0$ in the above inclusion and using \eqref{eq-1.1}, we get from the demiclosedness of ${\rm Gph}(\partial g)$  from Fact \ref{teo-p1} that $0\in \nabla f(x)+\partial g(x)\subseteq \partial (f+g)(x)$.
This contradicts the assumption that $x$ is not an optimal solution to problem \eqref{prob} and completes the proof of the lemma.
\end{proof}

Next we propose the second backtracking procedure. In contrast to
{\bf Linesearch 1}, this linesearch  demands {\em only one}
evaluation of the backward step and uses it in all possible
iterations. This is somehow an advantage of this linesearch, since
in many practical problems computing the proximal operator many
times may be very expensive. The linesearch is indeed a
generalization of  the one studied in \cite{yun-wlo} for solving the
nonlinear constrained optimization problem ($g=\delta_C$). We will
employ it in {\bf Method~\ref{A2}} in Section \ref{Sec-new-4}.

\begin{center}\fbox{\begin{minipage}[b]{\textwidth}
\begin{Line}\label{boundary2} Given $x$ and $\theta\in (0,1)$.\\
{\bf Input.} Set $\beta=1$, $J_x:=J(x,1)=\prox_{g}(x-\nabla f(x))$ with $x\in \dom g$.
\begin{retraitsimple}
\item[] {\bf While} $\dsty (f+g)\left(x -\beta (x-J_x)\right) > (f+g)(x)-\beta\left[g(x)-g(J_x)\right]-\beta
\la\nabla f(x), x-J_x\ra+\frac{\beta}{2}\|x-J_x\|^2$   {\bf do}\\
$\beta=\theta \beta$.
\item[] {\bf End While}
\end{retraitsimple}
{\bf Output.} $\beta$.
\end{Line}\end{minipage}}\end{center}
Similarly to {\bf Linesearch \ref{boundary}}, we also have finite
termination for {\bf Linesearch \ref{boundary2}}. It is
important to note that the well-definedness analysis is done without
assuming the second part of {\bf \ref{a2}} (uniform continuity and
boundedness).
\begin{lemma}\label{boundary-well2} If $x\in \dom g$ then {\bf Linesearch~\ref{boundary2}} stops after finitely many steps.
\end{lemma}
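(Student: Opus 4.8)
The plan is to show that the strict inequality guarding the \textbf{While} loop of {\bf Linesearch~\ref{boundary2}} must fail once the backtracking parameter $\beta$ is small enough; since the generated values are $\beta=1,\theta,\theta^2,\ldots\downarrow 0$, this gives termination after finitely many steps. Write $d:=x-J_x$. First I would dispose of the degenerate case $d=0$: then $x-\beta d=x$ for every $\beta$, so both sides of the loop condition equal $(f+g)(x)$, the strict inequality ``$>$'' fails at the first test, and the linesearch stops with output $\beta=1$. So from now on assume $d\neq 0$, hence $\N{d}^2>0$.

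The key structural observation is that, for $\beta\in[0,1]$,
$$
x-\beta d=(1-\beta)x+\beta J_x,
$$
a convex combination of $x,J_x\in\dom g$; in particular $x-\beta d\in\dom g\subseteq\dom f$, so both $f$ and $g$ are finite there and the left-hand side of the loop condition is well defined. Convexity of $g$ then yields the clean bound
$$
g(x-\beta d)\le(1-\beta)g(x)+\beta g(J_x)=g(x)-\beta\big[g(x)-g(J_x)\big],\qquad\beta\in[0,1].
$$
Subtracting this from the right-hand side of the loop condition, I see that the stopping inequality is \emph{implied} by the purely-$f$ inequality
$$
f(x-\beta d)\le f(x)-\beta\la\nabla f(x),d\ra+\frac{\beta}{2}\N{d}^2 .
$$

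To verify this last inequality for all small $\beta>0$ I would invoke only the Fr\'echet differentiability of $f$ at $x$ (the first part of Assumption~\ref{a2}), which gives the first-order expansion $f(x-\beta d)-f(x)+\beta\la\nabla f(x),d\ra=o(\beta)$ as $\beta\downarrow 0$; note that $x-\beta d$ stays in the open set on which $f$ is differentiable. Dividing by $\beta$, the left-hand quotient tends to $0$, whereas the target bound $\tfrac12\N{d}^2$ is a fixed positive number. Hence there is $\bar\beta>0$ such that the $f$-inequality, and therefore the stopping inequality, holds for every $\beta\in(0,\bar\beta]$. Choosing the first index $j$ with $\theta^j\le\bar\beta$ shows that the loop exits at that step, which is the claim.

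The point I would flag is not really an obstacle but the interplay of the two halves of the right-hand side: convexity of $g$ absorbs the nonsmooth term $-\beta[g(x)-g(J_x)]$ \emph{exactly}, so no growth or regularity condition on $g$ is needed, while the positive quadratic $\tfrac{\beta}{2}\N{d}^2$ dominates the differentiability remainder $o(\beta)$ for small $\beta$. This is precisely why no Lipschitz — indeed not even uniform continuity — assumption on $\nabla f$ enters, consistent with the remark preceding the lemma: only differentiability of $f$ at the single point $x$ is used.
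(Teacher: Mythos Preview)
Your proof is correct and rests on the same two ingredients as the paper's --- convexity of $g$ and Fr\'echet differentiability of $f$ at $x$ --- but the organization differs. The paper argues by contradiction: assuming the loop condition holds for every $\beta=\theta^j$, it divides by $\beta$, passes to the limit $\beta\downarrow 0$, and uses the directional derivative inequality $g'(x;J_x-x)+g(x)\le g(J_x)$ (their Fact~\ref{prop-cons-g-convex}(ii)) to conclude $\|x-J_x\|=0$, a contradiction. You instead argue directly: you peel off $g$ \emph{before} any limit via the raw convexity bound $g((1-\beta)x+\beta J_x)\le(1-\beta)g(x)+\beta g(J_x)$, reducing the stopping test to the purely smooth inequality $f(x-\beta d)\le f(x)-\beta\la\nabla f(x),d\ra+\tfrac{\beta}{2}\|d\|^2$, which first-order expansion of $f$ gives for small $\beta$. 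Your route is a touch more elementary (no directional-derivative machinery, no contradiction), while the paper's framing makes the role of $g'(x;\cdot)$ explicit; substantively they are the same argument.
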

\begin{proof}
If $x\in S_*$ we have $x=J_x$. Thus the linesearch immediately gives
us the output $1$ without proceeding any step. If $x\notin S_*$, by
contradiction let us assume that {\bf Linesearch \ref{boundary2}}
does not stop after  finitely many steps. Thus for all $\beta\in
\QQ:=\{1, \theta, \theta^2, \ldots \}$, we have
\begin{equation*}\label{no12*}
(f+g)(x -\beta (x-J_x)) > (f+g)(x)-\beta\left[g(x)-g(J_x)\right]-\beta
\la\nabla f(x), x-J_x\ra+\frac{\beta}{2}\|x-J_x\|^2.
\end{equation*}
It follows that
$$\displaystyle
\frac{(f+g)(x -\beta (x-J_x)) - (f+g)(x)}{\beta} +
g(x)-g(J_x)+\la\nabla f(x), x-J_x\ra
>\frac{1}{2}\|x-J_x \|^2.
$$
Taking  $\beta\downarrow 0$  and using the Fr\'echet
differentiability of $f$ and the convexity of $g$ give us that
\begin{align*}
\frac{1}{2}\|x-J_x \|^2\le& \la\nabla f(x), J_x-x\ra+
g^\prime(x;J_x-x)+ g(x)-g(J_x)+\la\nabla f(x), x-J_x\ra\\
=&g^\prime(x;J_x-x)+ g(x)-g(J_x) \le 0,
\end{align*} where the last inequality follows from Fact \ref{prop-cons-g-convex}{(ii)}.
Hence we have  $x=J_x$, which readily implies that $x-\nabla f(x)\in
\partial g(x)+x$, \emph{i.e.}, $0\in \nabla f(x)+\partial g(x)\subseteq \partial(f+g)(x)$. This contradicts the assumption $x\not\in S_*$.
\end{proof}
%%%%%%%%%%%%%%%%%%%%%%%%%%%%%%%%%%%%%%%%%%%%%%%%%%%%%%%%%%%%%%%%%%%%%%%%%%%%%%%%%%%%
%%%%%%%%%%%%%%%%%%%%%%%%%%%%%%%% Algorithm 1 %%%%%%%%%%%%%%%%%%%%%%%%%%%%%%%%%%%%%%%
%%%%%%%%%%%%%%%%%%%%%%%%%%%%%%%%%%%%%%%%%%%%%%%%%%%%%%%%%%%%%%%%%%%%%%%%%%%%%%%%%%%%
\section{The forward-backward method with Linesearch \ref{boundary}}\label{s:3}
This section devotes to the study of  the forward-backward
splitting method with {\bf Linesearch~\ref{boundary}}. We
mainly derive the weak convergence of the generated sequences from
this method and also obtain the same complexity of \cite[Theorem
$1.1$]{beck-teu} for the cost value sequences generated from  the forward-backward
iteration under a weaker assumption than the Lipschitz one on
$\nabla f$ usually imposed in the literature.

The following method has some similarities to the one proposed in \cite{tseng} for maximal monotone operators. However, it completely relaxes an extra expensive projection step \cite[Equation~(2.3)]{tseng}  and seems to be more natural in comparison with the classical forward-backward splitting method \eqref{F-B-I}.
\begin{center}\fbox{\begin{minipage}[b]{\textwidth}
\begin{method}\label{A1}

\item [    ] {\bf Initialization Step.} Take $x^0\in \dom g$, $\sigma>0$, $\theta\in(0,1)$ and $\delta\in(0,1/2)$.

\item [    ] \noindent {\bf Iterative Step.} Given $x^k$ set
\begin{equation}\label{F-B}
x^{k+1}=J(x^k,\alpha_k):=\prox_{\alpha_k g}(x^k-\alpha_k\nabla
f(x^k))
\end{equation}
with $\alpha_k :=$ {\bf
Linesearch~\ref{boundary}}$(x^k,\sigma,\theta,\delta)$.

\item [    ] \noindent  {\bf Stop Criteria.} If $x^{k+1}=x^k$, then stop.
\end{method}\end{minipage}}\end{center}
First note that from Lemma~\ref{boundary-well} that {\bf Linesearch~\ref{boundary}} for finding the stepsize
$\alpha_k$ in the above scheme is finite. Hence the choice of
sequence $(x^k)_{k\in \NN}$ in {\bf Method~\ref{A1}} is well-defined.
Another important feature from the definition of {\bf
Linesearch~\ref{boundary}} useful for our analysis is the following
inequality
\begin{equation}\label{Li}
\alpha_k\left\|\nabla f(x^{k+1})-\nabla
f(x^k)\right\|\le \delta\left\|x^{k+1}-x^{k}\right\|.
\end{equation}
Note further that if {\bf Method~\ref{A1}} stops  at iteration $k$
then we have $x^k=\prox_{\alpha_k g}(x^k-\alpha_k\nabla f(x^k))$ and
 consequently  $x^k\in S_*$.  Otherwise,  we will mainly show that the sequence $(x^k)_{k\in \NN}$
generated by this method is converging weakly to some optimal
solution. Verifying this claim needs some auxiliary results as
follows.
\begin{proposition}\label{prop1} Let $\alpha_k={\bf Linesearch~\ref{boundary}}(x^k,\sigma,\theta,\delta)$. For all $k\in
\NN$ and $x\in\dom g$, we have
\item {\bf (i)} $\|x^k-x\|^2-\|x^{k+1}-x\|^2\ge 2\alpha_k\left[(f+g)(x^{k+1})-(f+g)(x)\right]+(1-2\delta)\|x^{k+1}-x^k\|^2$;
\item {\bf(ii)} $(f+g)(x^{k+1})-(f+g)(x^k)\le - \frac{\dsty(1-\delta)}{\dsty \alpha_k}\|x^{k+1}-x^k\|^2.$
\end{proposition}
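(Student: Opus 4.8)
The plan is to prove (i) directly and then recover (ii) as the special case $x=x^k$. The two ingredients I would combine are the subdifferential characterization \eqref{in-sub} of the proximal step and the linesearch inequality \eqref{Li}. Putting $z=x^k-\alpha_k\nabla f(x^k)$ in \eqref{in-sub}, and recalling that $x^{k+1}=\prox_{\alpha_k g}(z)$, gives
\[
\frac{x^k-x^{k+1}}{\alpha_k}-\nabla f(x^k)\in\partial g(x^{k+1}),
\]
so the subgradient inequality \eqref{sub-inq} yields, for every $x\in\dom g$,
\[
g(x^{k+1})-g(x)\le\Big\langle \tfrac{x^k-x^{k+1}}{\alpha_k}-\nabla f(x^k),\,x^{k+1}-x\Big\rangle.
\]

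The crucial step, and the one that replaces the \emph{Descent Lemma} normally invoked under a global Lipschitz assumption, is a one-sided estimate on $f(x^{k+1})-f(x^k)$. First I would bound $f(x^{k+1})-f(x^k)\le\langle\nabla f(x^{k+1}),x^{k+1}-x^k\rangle$ by convexity of $f$, then write $\nabla f(x^{k+1})=\nabla f(x^k)+\big(\nabla f(x^{k+1})-\nabla f(x^k)\big)$ and apply Cauchy--Schwarz together with \eqref{Li} to the difference term. This produces the descent-type inequality
\[
f(x^{k+1})-f(x^k)\le\langle\nabla f(x^k),x^{k+1}-x^k\rangle+\frac{\delta}{\alpha_k}\|x^{k+1}-x^k\|^2,
\]
which is precisely where uniform continuity of $\nabla f$ is traded for the linesearch tolerance $\delta$. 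This is the only genuine obstacle: without \eqref{Li} there is no a priori control on $\langle\nabla f(x^{k+1})-\nabla f(x^k),x^{k+1}-x^k\rangle$, and everything after this point is bookkeeping.

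Next I would decompose $(f+g)(x^{k+1})-(f+g)(x)$ into the three pieces $[f(x^{k+1})-f(x^k)]$, $[f(x^k)-f(x)]$, and $[g(x^{k+1})-g(x)]$, estimating the first by the descent-type inequality above, the second by convexity of $f$ at $x^k$ (so that $f(x^k)-f(x)\le\langle\nabla f(x^k),x^k-x\rangle$), and the third by the subgradient inequality displayed in the first paragraph. The pleasant cancellation to watch for is that the three $\nabla f(x^k)$ contributions carry the increments $x^{k+1}-x^k$, $x^k-x$, and $x-x^{k+1}$, which sum to zero; hence every explicit gradient term disappears and only $\tfrac{\delta}{\alpha_k}\|x^{k+1}-x^k\|^2$ and $\tfrac{1}{\alpha_k}\langle x^k-x^{k+1},x^{k+1}-x\rangle$ survive.

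To finish (i) I would apply the polarization identity $\langle x^k-x^{k+1},x^{k+1}-x\rangle=\tfrac12\big(\|x^k-x\|^2-\|x^{k+1}-x\|^2-\|x^{k+1}-x^k\|^2\big)$, multiply through by $2\alpha_k$, and rearrange so that the norm differences sit on the left; the unit coefficient coming from the polarization identity and the $-2\delta$ coming from the gradient-difference estimate then combine into $(1-2\delta)$, giving exactly the claimed inequality. Finally, (ii) is immediate upon choosing $x=x^k$ in (i): this collapses $\|x^k-x\|^2$ to zero and turns the left-hand side into $-\|x^{k+1}-x^k\|^2$, which together with the $(1-2\delta)\|x^{k+1}-x^k\|^2$ term and division by $2\alpha_k$ yields the coefficient $-(1-\delta)$ as stated.
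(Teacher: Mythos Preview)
Your proposal is correct and follows essentially the same route as the paper: both arguments combine the proximal subgradient inclusion for $g$ at $x^{k+1}$, convexity of $f$ at $x^k$ and at $x^{k+1}$, Cauchy--Schwarz with the linesearch bound \eqref{Li}, and the polarization identity, then specialize $x=x^k$ for part~(ii). The only difference is packaging---you isolate the descent-type inequality $f(x^{k+1})-f(x^k)\le\langle\nabla f(x^k),x^{k+1}-x^k\rangle+\tfrac{\delta}{\alpha_k}\|x^{k+1}-x^k\|^2$ up front and then observe the telescoping of the $\nabla f(x^k)$ terms, whereas the paper interleaves these same steps---but the logic is identical.
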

\begin{proof}
First let us justify {\bf (i)} by noting from \eqref{in-sub} and \eqref{F-B} that
\begin{equation*}
\frac{x^k-x^{k+1}}{\alpha_k}-\nabla f(x^k)=\frac{x^k-J(x^k,\al_k)}{\alpha_k}-\nabla f(x^k)\in\partial g(J(x^k,\al_k))=\partial g(x^{k+1}).
\end{equation*}
It follows from the convexity of $g$ that
\begin{equation}\label{eq2}g(x)-g(x^{k+1})\ge \left\la\frac{x^k-x^{k+1}}{\alpha_k}-\nabla f(x^k),x-x^{k+1}\right\ra\quad\mbox{for all}\quad x\in \dom g.
\end{equation}
Since $f$ is convex, we also have
\begin{equation}\label{eq3}
f(x)-f(y)\ge \la\nabla f(y), x-y\ra\quad\mbox{for all}\quad x\in \dom f, \; \; y\in {\dom
	g}.
\end{equation}
Summing \eqref{eq2} and \eqref{eq3} with  any $x\in \dom g\subseteq \dom f$ and $y=x^{k}\in \dom g$ gives us the following expressions
\begin{align*}\nonumber\label{eq4}(f+g)(x)\ge& f(x^k)+g(x^{k+1}) + \left\la\frac{x^k-x^{k+1}}{\alpha_k}-\nabla f(x^k),x-x^{k+1}\right\ra+\la\nabla f(x^k), x-x^k\ra\\
\nonumber=& f(x^k)+g(x^{k+1}) +\frac{1}{\alpha_k} \la
x^k-x^{k+1},x-x^{k+1}\ra+\la\nabla f(x^k), x^{k+1}-x^k\ra\\
\nonumber\ge&f(x^k)+g(x^{k+1}) +\frac{1}{\alpha_k} \la x^k-x^{k+1},x-x^{k+1}\ra+\la\nabla f(x^{k+1}),x^{k+1}-x^{k}\ra\\
\nonumber&-\|\nabla f(x^k)-\nabla f(x^{k+1})\|\cdot\|x^{k+1}-x^k\|\\
\ge&f(x^k)+g(x^{k+1}) +\frac{1}{\alpha_k} \la
x^k-x^{k+1},x-x^{k+1}\ra+\la\nabla f(x^{k+1}),x^{k+1}-x^{k}\ra\\&-\frac{\delta}{\alpha_k}\|
x^{k+1}-x^k\|^2,
\end{align*}
where the last inequality follows from \eqref{Li}. After rearrangement we get
\begin{equation}\label{eq5}\la x^k-x^{k+1},x^{k+1}-x\ra\ge \alpha_k[f(x^k)+g(x^{k+1})-(f+g)(x)+
\la\nabla f(x^{k+1}),x^{k+1}-x^{k}\ra]-\delta\|
x^{k+1}-x^k\|^2.
\end{equation}
Since
$
2\la x^k-x^{k+1},x^{k+1}-x \ra=\|x^k-x\|^2-\|x^{k+1}-x\|^2-\|x^k-x^{k+1}\|^2,
$
we get from  \eqref{eq5} that
\begin{eqnarray}\begin{array}{ll}
\|x^k-x\|^2-\|x^{k+1}-x\|^2\ge &2\alpha_k[f(x^k)+g(x^{k+1})-(f+g)(x)]\\
&\disp+2\alpha_k\la\nabla f(x^{k+1}),x^{k+1}-x^k\ra
+(1-2\delta)\|x^k-x^{k+1}\|^2.\label{eq6}
\end{array}
\end{eqnarray}
 By using \eqref{eq3} with $x=x^k$ and
$y=x^{k+1}$, we  have $f(x^k)-f(x^{k+1})\ge\la\nabla
f(x^{k+1}),x^k-x^{k+1}\ra.$ This together with \eqref{eq6} gives us
that
\begin{align*}\|x^k-x\|^2-\|x^{k+1}-x\|^2\ge&
2\alpha_k[(f+g)(x^{k+1})-(f+g)(x)]+(1-2\delta)\|x^k-x^{k+1}\|^2,
\end{align*}
which  verifies {\bf (i)}. Note further that {\bf (ii)} is a consequence of {\bf (i)} when $x=x^k$. The proof  is complete.
\end{proof}

Proposition \ref{prop1}(ii) shows that {\bf Method~\ref{A1}} is
a descent method in the sense that the value of the cost function
$f+g$ at each iteration is decreasing. Furthermore, it is easy to
check from  Proposition \ref{prop1}(i) that the generated sequence
of {\bf Method~\ref{A1}} is Fej\'er convergent to the optimal
solution set $S_*$ whenever $S_*\neq\emptyset$. This observation is
indeed the center of the following main result of this section, where we prove the weak convergence of sequence $(x^k)_{k\in \NN}$ in {\bf Method~\ref{A1}} and also  $\left((f+g)(x^k)\right)_{k\in \NN}$ is a minimizing sequence of $f+g$ without the Lipschitz assumption on $\nabla f$.  To the best of our knowledge, this result improves \cite[Theorem~1.2]{beck-teu} and even the classical results for gradient method with linesearch; see, for instance, \cite[Proposition 1.3.3]{bertsekas } and \cite{Armijo-1}. Moreover, we show that the  sequence $\left((f+g)(x^k)\right)_{k\in \NN}$ converges to the infimum value when the solution set is empty. 

\begin{theorem}\label{new-cov}  Let $(x^k)_{k\in \NN}$ and $(\al_k)_{k\in\NN}$ be the sequences generated by {\bf Method~\ref{A1}}. The following statements hold:

\item {\bf (i)} If $S_*\neq\emptyset$ then $(x^k)_{k\in \NN}$ is  weakly convergent to a point in
$S_*$. Moreover, \begin{equation}\label{min*}\lim_{k\to\infty}
(f+g)(x^k)=\min_{x\in \HH} \, (f+g)(x).\end{equation}

\item {\bf (ii)} If $S_*=\emptyset$ then we have
\begin{equation*}\label{inf}
\lim_{k\to\infty}\|x^k\|=+\infty \quad \mbox{and}
\quad\lim_{k\to\infty} (f+g)(x^k)=\inf_{x\in \HH}(f+g)(x).
\end{equation*}
\end{theorem}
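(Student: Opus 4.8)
The plan is to read everything off the two estimates of Proposition~\ref{prop1} together with two pointwise subgradient identities. From \eqref{in-sub} at the accepted step and the elementary inclusion $\nabla f(\cdot)+\partial g(\cdot)\subseteq\partial(f+g)(\cdot)$ one obtains
$$v^{k+1}:=\frac{x^k-x^{k+1}}{\alpha_k}+\nabla f(x^{k+1})-\nabla f(x^k)\in\partial(f+g)(x^{k+1}),\qquad \|v^{k+1}\|\le\frac{1+\delta}{\alpha_k}\|x^{k+1}-x^k\|,$$
the bound coming from \eqref{Li}. Whenever {\bf Linesearch~\ref{boundary}} backtracks, the last rejected trial $\gamma_k:=\alpha_k/\theta$ and the point $z^k:=J(x^k,\gamma_k)$ satisfy the reversed inequality $\gamma_k\|\nabla f(z^k)-\nabla f(x^k)\|>\delta\|z^k-x^k\|$, and \eqref{in-sub} again yields
$$w^k:=\frac{x^k-z^k}{\gamma_k}+\nabla f(z^k)-\nabla f(x^k)\in\partial(f+g)(z^k).$$
The technical heart is a reusable claim: \emph{if a bounded subsequence $(x^{k_j})$ converges weakly to $\bar x$ and $\|x^{k_j+1}-x^{k_j}\|\to 0$, then $\bar x\in S_*$.} I would prove it by passing to a subsequence and splitting on the stepsizes. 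If $\inf_j\alpha_{k_j}=c>0$, then $\|v^{k_j+1}\|\le\frac{1+\delta}{c}\|x^{k_j+1}-x^{k_j}\|\to 0$ while $x^{k_j+1}$ still converges weakly to $\bar x$, so the demiclosedness in Fact~\ref{teo-p1} gives $0\in\partial(f+g)(\bar x)$. If instead $\alpha_{k_j}\to 0$, I switch to the rejected point $z^{k_j}$: Lemma~\ref{lema-prop} gives $\|z^{k_j}-x^{k_j}\|\le\theta^{-1}\|x^{k_j+1}-x^{k_j}\|\to0$, so $z^{k_j}$ converges weakly to $\bar x$, uniform continuity of $\nabla f$ on the bounded set (Assumption~\ref{a2}) gives $\|\nabla f(z^{k_j})-\nabla f(x^{k_j})\|\to0$, and the reversed inequality gives $\|x^{k_j}-z^{k_j}\|/\gamma_{k_j}\to0$; hence $\|w^{k_j}\|\to0$ and demiclosedness again yields $\bar x\in S_*$.

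For part~(i), fix $x^*\in S_*$. Proposition~\ref{prop1}(i) with $x=x^*$ shows $\|x^{k+1}-x^*\|\le\|x^k-x^*\|$ and, after telescoping, $\sum_k\|x^{k+1}-x^k\|^2<\infty$; in particular $(x^k)$ is bounded and $\|x^{k+1}-x^k\|\to0$. Every weak accumulation point then lies in $S_*$ by the claim, so Fact~\ref{lema-Fejer} delivers weak convergence of $(x^k)$ to a point of $S_*$. For the value \eqref{min*}, note $(f+g)(x^k)$ is nonincreasing by Proposition~\ref{prop1}(ii), hence converges to some $L\ge m:=\min_\HH(f+g)$, and it suffices to find a subsequence of values tending to $m$. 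If $\limsup_k\alpha_k>0$ I use $(f+g)(x^{k_j+1})\le m+\la v^{k_j+1},x^{k_j+1}-x^*\ra\to m$; if $\alpha_k\to0$ I use $(f+g)(z^k)\le m+\la w^k,z^k-x^*\ra\to m$ and transfer it to the iterate via Proposition~\ref{prop1}(i) with $x=z^k$, namely $(f+g)(x^{k+1})\le (f+g)(z^k)+\frac{1}{2\alpha_k}\|x^k-z^k\|^2$, the error tending to $0$ since $\|x^k-z^k\|/\alpha_k\to0$.

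For part~(ii) suppose $S_*=\emptyset$ and let $L=\lim_k(f+g)(x^k)\ge m:=\inf_\HH(f+g)$. The key point is that $L>m$ is impossible: picking $y\in\dom g$ with $(f+g)(y)<L$ (so $L$ is finite), Proposition~\ref{prop1}(i) forces $\|x^{k+1}-y\|^2\le\|x^k-y\|^2-2\alpha_k\left(L-(f+g)(y)\right)$, whence $(\|x^k-y\|)$ is nonincreasing and $(x^k)$ is bounded; finiteness of $L$ and $\alpha_k\le\sigma$ in the descent estimate give $\|x^{k+1}-x^k\|\to0$, and the claim produces a point of $S_*$, contradicting $S_*=\emptyset$. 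Hence $(f+g)(x^k)\to m$. Finally, if $\|x^k\|\not\to\infty$ there is a bounded subsequence; when $m>-\infty$ the descent estimate again gives $\|x^{k+1}-x^k\|\to0$ and the claim yields a point of $S_*$, while when $m=-\infty$ the values $(f+g)(x^{k_j})$ would tend to $-\infty$ on a bounded set, contradicting the existence of a continuous affine minorant of the proper l.s.c.\ convex function $f+g$. Either way $\|x^k\|\to\infty$.

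The main obstacle, present throughout, is precisely the regime $\alpha_k\to0$ that the paper insists on allowing: there the natural certificate $v^{k+1}$ at the iterate $x^{k+1}$ need not be small, and the whole argument hinges on replacing it by the certificate $w^k$ at the last rejected trial $z^k$, where the failed linesearch inequality supplies exactly the missing smallness $\|x^k-z^k\|/\gamma_k\to0$; Lemma~\ref{lema-prop} is what keeps $z^k$ close to $x^k$. A secondary difficulty, specific to~(ii), is that the iterates may be unbounded, which I circumvent by showing that any failure of value convergence would itself force Fej\'er monotonicity toward a low-value point, hence boundedness, reducing to the bounded case handled by the claim.
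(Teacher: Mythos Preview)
Your proof is correct and follows essentially the same approach as the paper: Fej\'er monotonicity from Proposition~\ref{prop1}(i), the case split on the stepsizes together with demiclosedness of $\partial(f+g)$, and in the small-$\alpha$ regime the switch to the last rejected trial point via Lemma~\ref{lema-prop} and the failed linesearch inequality. The only differences are organizational---you factor the accumulation-point argument into a reusable claim and obtain the value convergence through the subgradient inequality for $v^{k+1}$ and $w^k$ rather than the paper's estimate~\eqref{like17}---and your part~(ii) is in fact a bit more careful than the paper's, which invokes the two cases of~(i) while excluding~\eqref{in15} without explicitly re-establishing $\|x^{k+1}-x^k\|\to0$.
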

\begin{proof} Let us justify {\bf (i)} by supposing that $S_*\neq\emptyset$. By applying Proposition~\ref{prop1}(i) at any $x_*\in S_*$, we have
\begin{align}\label{in15}\|x^k-x_*\|^2-\|x^{k+1}-x_*\|^2&\disp\ge
2\alpha_k[(f+g)(x^{k+1})-(f+g)(x_*)]+(1-2\delta)\|x^k-x^{k+1}\|^2\\
&\disp\ge (1-2\delta)\|x^k-x^{k+1}\|^2\ge0.\nonumber
\end{align}
It follows that  the
sequence $(x^k)_{k\in\NN}$ is Fej\'er convergent to
$S_*$ and thus is
bounded by Fact~\ref{lema-Fejer}(i). By using
\eqref{in15}, we get
\begin{align*}\nonumber 
0&\le2\alpha_k[(f+g)(x^{k+1})-(f+g)(x_*)]\le
\|x^k-x_*\|^2-\|x^{k+1}-x_*\|^2\\\label{paracero}
&=(\|x^k-x_*\|+\|x^{k+1}-x_*\|)\cdot(\|x^k-x_*\|-\|x^{k+1}-x_*\|)\\\nonumber
&\le
2M(\|x^k-x_*\|-\|x^{k+1}-x_*\|)\\\nonumber
&\le 2M\|x^k-x^{k+1}\|,
\end{align*} where $M:=\sup\{\|x^k-x_*\||\; k\in\NN\}<+\infty$. Hence the above inequalities lead us to
\begin{equation}\label{like17}
(f+g)(x^{k+1})-(f+g)(x_*)\le
M\,\frac{\|x^k-x^{k+1}\|}{\al_k}.
\end{equation}
Due to the Fej\'er property of $(x^k)_{k\in\NN}$ to $S_*$, the sequence $(\|x^k-x^*\|)_{k\in \NN}$ is convergent. This together with \eqref{in15} tells us that $\|x^k-x^{k+1}\|\to 0$ as $k\to\infty$.

Since $(x^k)_{k\in\NN}$  is bounded, the set of its weak
accumulation points is nonempty. Take any weak
accumulation point  $\bar x$ of $(x^k)_{k\in\NN}$, we find a subsequence
$(x^{n_k})_{k\in\NN}$ weakly converging to $\bar x$. Now let us
split our further analysis into two distinct cases.

\medskip

\noindent {\bf Case 1.} Suppose that the sequence $(\alpha_{n_k})_{k\in
\NN}$ defined in {\bf Method~\ref{A1}} does not converge to $0$. Hence there exist a subsequence (without relabelling) of $(\alpha_{n_k})_{k\in
\NN}$ and  $\alpha>0$
such that
\begin{equation}\label{bk-no-0**}
{\alpha_{n_k}\geq\alpha.}
\end{equation}
Since $(x^k)_{k\in\NN}$ is bounded and $\|x^k-x^{k+1}\|\to 0$ as
claimed above, we get from Assumption \ref{a2} that
\begin{equation}\label{grad-to-0**}
\dsty\lim_{k\rightarrow\infty}\| \nabla f(x^{n_k})-\nabla
f(x^{n_k+1})\| =0.
\end{equation}
Since $x^{n_k+1}=J(x^{n_k},\alpha_{n_k})$, it follows from \eqref{in-sub} and \eqref{F-B} that
$$\displaystyle
\frac{x^{n_k}-\al_{n_k}\nabla f(x^{n_k})-x^{{n_k}+1}}{\alpha_{n_k}} \in
 \partial g(x^{n_k+1}),
$$
which implies in turn the expression
\begin{eqnarray}\label{abc}
\frac{x^{n_k}-x^{{n_k}+1}}{\alpha_{n_k}} + \nabla f(x^{n_k+1})-\nabla
f(x^{n_k})\in
\nabla f(x^{n_k+1})+\partial g(x^{n_k+1})\subseteq \partial (f+g)(x^{n_k+1}).
\end{eqnarray}
Note also that the subsequence $(x^{n_k+1})_{k\in\NN}$ converges weakly to $\bar x$ due to the fact that $\|x^{n_k}-x^{n_k+1}\|\to 0$ as $k\to\infty$. By passing $k\to \infty$ in \eqref{abc}, we get from
\eqref{bk-no-0**}, \eqref{grad-to-0**}, and  Fact \ref{teo-p1}  that
$ 0\in \partial (f+g)(\bar{x})$, which means $\bar{x}\in
S_*$. Furthermore, since the sequence $((f+g)(x^k))_{k\in\NN}$ is decreasing due to Proposition~\ref{prop1}(ii), \eqref{min*} is a consequence of \eqref{like17} and \eqref{bk-no-0**}.

\medskip

\noindent{\bf Case 2.} Suppose now 
$\lim_{k\rightarrow\infty}\alpha_{n_k}=0$. Define
$\hat{\alpha}_{n_k}:=\dsty
\frac{\alpha_{n_k}}{\theta}>\alpha_{n_k}>0$ and
$\hat{x}^{n_k}:=J\left(x^{n_k},\hat{\alpha}_{n_k}\right)$. Due to
Lemma~\ref{lema-prop} we have
\[
\|x^{n_k}-\hat x^{n_k}\|=\|x^{n_k}-J(x^{n_k},\hat \alpha_{n_k})\|\le \frac{\hat \al_{n_k}}{\al_{n_k}}\|x^{n_k}-J(x^{n_k},\al_{n_k})\|= \frac{1}{\theta}\|x^{n_k}-x^{n_k+1}\|,
\]
which combines with the boundedness of  $(x^{n_k})_{k\in \NN}$ to show
that the sequence  $(\hat x^{n_k})_{k\in \NN}$ is also bounded. It follows from the definition of {\bf Linesearch~\ref{boundary}} that
\begin{equation}\label{eq-no-armijo}
\hat{\alpha}_{n_k}\left\|\nabla f\big(\hat{x}^{n_k}\big)-\nabla
f(x^{n_k})\right\|>\delta\left\|\hat{x}^{n_k}-x^{n_k}\right\|.
\end{equation}
Since $\hat{\alpha}_{n_k}\downarrow 0$ and  both $(x^{n_k})_{k\in
\NN}$ and $(\hat x^{n_k})_{k\in \NN}$ are bounded,
\eqref{eq-no-armijo} together with Assumption
\ref{a2}  tells us that
$\lim_{k\rightarrow\infty}\|\hat{x}^{n_k}-x^{n_k}\|=0$  and thus  $(\hat{x}^{n_k})_{k\in \NN}$ also weakly converges to $\bar x$. Thanks to
Assumption \ref{a2} again,  we have
\begin{equation}\label{hatgradk}\lim_{k\rightarrow\infty}\left\|\nabla f\big(\hat{x}^{n_k}\big)-\nabla
f(x^{n_k})\right\|=0.\end{equation}
This and   \eqref{eq-no-armijo} imply that
\begin{equation}\label{hatxk}
\lim_{k\rightarrow\infty}\frac{1}{\hat{\alpha}_{n_k}}\|\hat{x}^{n_k}-x^{n_k}\|=0.\end{equation}
Using  \eqref{in-sub} with $z=x^{n_k}-\hat{\alpha}_{n_k} \nabla f(x^{n_k})$ gives us that
\begin{equation*}
\frac{x^{n_k}-\hat \al_{n_k}\nabla f(x^{n_k})-\hat{x}^{n_k}}{\dsty\hat{\alpha}_{n_k}} + \nabla
f(\hat{x}^{n_k})\in \partial
g(\hat{x}^{n_k})+\nabla f(\hat{x}^{n_k}) \subseteq\partial (f+g)(\hat{x}^{n_k}).
\end{equation*}
By letting $k\to\infty$, we get from the latter, \eqref{hatgradk}, 
\eqref{hatxk}, and Fact~\ref{teo-p1} that $ 0\in
\partial (f+g)(\bar{x})$, which  means  $\bar{x}\in S_*$.
 It remains to verify \eqref{min*}
in this case. Indeed, we get from Lemma~\ref{lema-prop} that
\[
\|x^{n_k}-\hat
x^{n_k}\|=\|x^{n_k}-J(x^{n_k},\frac{\al_{n_k}}{\theta})\|\ge
\|x^{n_k}-J(x^{n_k},\al_{n_k})\|=\|x^{n_k}-x^{n_k+1}\|.
\]
This together with \eqref{hatxk} yields $\frac{\|x^{n_k}-x^{n_k+1}\|}{\al_{n_k}}\to 0$ as $k\to \infty$. Since $(f+g)(x^k)$ is decreasing due to Proposition~\ref{prop1}(ii), we derive from the latter and \eqref{like17} that
\[
0=\lim_{k\to\infty} M\,\frac{\|x^{n_k}-x^{{n_k}+1}\|}{\al_{n_k}}\ge \lim_{k\to\infty}(f+g)(x^{n_k})-(f+g)(x_*)=\lim_{k\to\infty}(f+g)(x^k)-(f+g)(x^*)\ge 0,
\]
which clearly ensures \eqref{min*}.

From both cases above, we have \eqref{min*} and  the fact that any weak accumulation point of
$(x^k)_{k\in \NN}$  is an element of $S_*$. Thanks to
Fact~\ref{lema-Fejer}(ii), the sequence $(x^k)_{k\in \NN}$ weakly
converges to some point in $S_*$. This verifies {\bf (i)} of the
theorem.

To justify {\bf (ii)}, suppose that $S_*=\emptyset$. Observe  from the
proof of {\bf (i)} (without regarding \eqref{min*}, \eqref{in15}, and \eqref{like17}) that if $(x^k)_{k\in \NN}$ has any weak
accumulation point then this point is an optimal solution as illustrated in both cases there. Since $S_*=\emptyset$, any subsequence of $(x^k)_{k\in \NN}$  is unbounded
and thus $\|x^k\|\to+\infty$ as $k\to\infty$. Furthermore, note
that
$
s:=\lim_{k\to\infty} (f+g)(x^k)\ge \inf_{x\in \HH}(f+g)(x),
$
where $s$ exists due to fact that  $((f+g)(x^k))_{k\in\NN}$ is decreasing by Proposition~\ref{prop1}(ii). If $s>\inf_{x\in \HH}(f+g)(x)$ then  the
following auxiliary set
\begin{equation*} S_{\rm lev}(x^0):=\left\{x\in \dom g\colon
\;\; (f+g)(x)\le (f+g)(x^k),\; \forall k\in
\NN\right\}
\end{equation*}
is nonempty. By applying Proposition~\ref{prop1}(i) at any $x\in
S_{\rm lev}(x^0)$, similarly to \eqref{in15} we also have
$(x^k)_{k\in \NN}$ is F\'ejer convergent to $S_{\rm lev}(x^0)$. It
follows from  Fact~\ref{lema-Fejer}(i) that the sequence
$(x^k)_{k\in \NN}$ is bounded, which is a contradiction. Hence we
have $s=\inf_{x\in \HH}(f+g)(x)$ and complete the proof of the theorem.
\end{proof}

As discussed before {\bf Method~\ref{A1}}, our method improves the
scheme in \cite{tseng} for the particular case that  the two maximal monotone
operators considered there are $\nabla f$ and $\partial g$ by
relaxing completely  an additional step. Our
Theorem~\ref{new-cov} also loosens some unnatural assumptions
imposed in \cite[Theorem~3.4(b)]{tseng}. Furthermore,  we  obtain new information on the convergence of the cost values at generated sequences in this result.

\subsection{Complexity analysis of Method \ref{A1}}
In this subsection we present complexity analysis of the iterates in
{\bf Method~\ref{A1}}. When the stepsizes generated by {\bf
Linesearch \ref{boundary}} are bounded below by a positive
number,  our analysis shows that the expected error from the cost
value at the $k$-th iteration  to the optimal value is
$\mathcal{O}(k^{-1})$ in Hilbert spaces and $o(k^{-1})$ in finite dimensions, which improves the complexity of the first-order algorithm presented in \cite[Theorem~1.1]{beck-teu}. It is worth
emphasizing  that the global Lipschitz continuity assumption on the
gradient $\nabla f$ used in  \cite[Theorem~1.1]{beck-teu} is sufficient but not necessary for
the boundedness from below of the stepsizes aforementioned; see our
Proposition~\ref{lema-alpha} below. Since $\al_k>0$ for any $k\in
\NN$, this boundedness assumption actually  means that
$\liminf_{k\to\infty}\al_k>0$, which was used before in \cite{tseng}
for different purposes.
\begin{theorem}\label{l-rate}
Let $(x^k)_{k\in \NN}$ and $(\al_k)_{k\in \NN}$ be the sequences
generated in {\bf Method~\ref{A1}}. Suppose that $S_*\neq\emptyset$ and there exists
$\al>0$ such that $\alpha_k\ge\alpha>0$ for all $k\in \NN$. Then we have
\begin{equation}\label{rate}
(f+g)(x^k)-\min_{x\in \HH}\,(f+g)(x)\le\frac{1}{2\alpha } \frac{[{\rm
dist}(x^0,S_*)]^2}{k} \quad \mbox{for all}\quad k\in \NN.
\end{equation}
If in addition $\dim \HH<+\infty$ then 
\begin{equation}\label{xrate}
\lim_{k\to \infty}k\big[(f+g)(x^k)-\min_{x\in \HH}\,(f+g)(x)\big]=0. 
\end{equation}
\end{theorem}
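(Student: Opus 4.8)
The plan is to extract from Proposition~\ref{prop1} a single master inequality and then read off both rates from it. Write $F:=f+g$ and let $F_*:=\min_{x\in\HH}F(x)$, which is attained since $S_*\neq\emptyset$; set $a_k:=F(x^k)-F_*\ge0$. First I would apply Proposition~\ref{prop1}(i) at an arbitrary $x_*\in S_*$, discard the nonnegative term $(1-2\delta)\|x^{k+1}-x^k\|^2$, and use the standing hypotheses $\alpha_k\ge\alpha$ together with $a_{k+1}\ge0$ to obtain
\begin{equation}\label{master}
\|x^k-x_*\|^2-\|x^{k+1}-x_*\|^2\ge 2\alpha\,a_{k+1}.
\end{equation}
Summing \eqref{master} over $k=j,\dots,n-1$ telescopes the left-hand side, and since $F(x^k)$ is nonincreasing by Proposition~\ref{prop1}(ii) each summand $a_{k+1}$ is bounded below by $a_n$ (there being $n-j$ of them); this yields the key estimate
\begin{equation}\label{keyest}
2\alpha\,(n-j)\,a_n\le \|x^j-x_*\|^2-\|x^n-x_*\|^2\qquad(0\le j\le n).
\end{equation}

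For the $\mathcal{O}(k^{-1})$ bound \eqref{rate} I would take $j=0$ in \eqref{keyest}, drop $-\|x^n-x_*\|^2\le0$, and rearrange to get $a_n\le \|x^0-x_*\|^2/(2\alpha n)$; taking the infimum over $x_*\in S_*$ replaces $\|x^0-x_*\|^2$ by $[{\rm dist}(x^0,S_*)]^2$ and gives \eqref{rate} after relabelling $n$ as $k$.

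For the $o(k^{-1})$ statement \eqref{xrate} in finite dimensions I would exploit the full strength of Theorem~\ref{new-cov}(i): there $(x^k)_{k\in\NN}$ converges weakly to some $\bar x\in S_*$, and since $\dim\HH<+\infty$ this convergence is in norm, so $\|x^n-\bar x\|\to0$; moreover $a_n\to0$ by \eqref{min*}. Fix $j$, put $x_*=\bar x$ in \eqref{keyest}, and pass to the $\limsup$ as $n\to\infty$: because $\|x^n-\bar x\|^2\to0$ and $j\,a_n\to0$, one obtains $\limsup_{n\to\infty}2\alpha\,n\,a_n\le\|x^j-\bar x\|^2$. The right-hand side no longer involves $n$, so letting $j\to\infty$ and using $\|x^j-\bar x\|\to0$ forces $\limsup_{n\to\infty}n\,a_n\le0$; since $n\,a_n\ge0$ this is exactly \eqref{xrate}.

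The telescoping and the $j=0$ specialization are routine. The step carrying the real content is the finite-dimensional passage to the limit: the improvement of $o(k^{-1})$ over $\mathcal{O}(k^{-1})$ hinges entirely on having a genuine norm limit $\bar x$ to insert into \eqref{keyest} so that $\|x^n-\bar x\|^2$ vanishes, and this is precisely where $\dim\HH<+\infty$ is indispensable—in infinite dimensions only weak convergence is available, $\|x^n-\bar x\|^2$ need not tend to $0$, and the argument would stall. I would therefore invoke Theorem~\ref{new-cov}(i) and the coincidence of weak and norm convergence in finite dimensions exactly at that point.
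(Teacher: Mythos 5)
Your proof is correct, and it runs on the same machinery as the paper's: Proposition~\ref{prop1}(i)--(ii), Fej\'er-type telescoping, and, for the second half, strong convergence in finite dimensions from Theorem~\ref{new-cov}. The $\mathcal{O}(k^{-1})$ part is essentially identical to the paper's argument (telescope from $\ell=0$ to $k-1$, use monotonicity of the cost values, take the infimum over $x_*\in S_*$). For the $o(k^{-1})$ part your execution is mildly but genuinely different. The paper fixes $\ve>0$, chooses $K$ with $\|x^k-x_*\|\le\ve$ for $k\ge K$, factors $\|x^{\ell+1}-x_*\|^2-\|x^\ell-x_*\|^2$ as $\big(\|x^{\ell+1}-x_*\|+\|x^\ell-x_*\|\big)\big(\|x^{\ell+1}-x_*\|-\|x^\ell-x_*\|\big)$, and telescopes the \emph{first powers} of the distances over $\ell=K,\dots,K+k-1$ to reach $k\big[(f+g)(x^{K+k})-(f+g)(x_*)\big]\le\ve^2/\al$, finishing with a $\limsup$ argument over arbitrary $\ve$. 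You instead keep the \emph{squared} distances, telescope your master inequality from an arbitrary late index $j$ to get $2\al(n-j)\big[(f+g)(x^n)-(f+g)(x_*)\big]\le\|x^j-\bar x\|^2-\|x^n-\bar x\|^2\le\|x^j-\bar x\|^2$, take the $\limsup$ in $n$ (using $j\big[(f+g)(x^n)-(f+g)(x_*)\big]\to 0$ for fixed $j$), and then let $j\to\infty$. Your route buys a small economy: no factorization trick, no auxiliary $\ve$, and both rates fall out of a single inequality; the paper's route is the same idea packaged with a restart index $K$ determined by $\ve$ rather than a restart index sent to infinity. You also correctly isolate where finite-dimensionality is indispensable---upgrading the weak limit of Theorem~\ref{new-cov}(i) to a norm limit---which is exactly the point where the paper invokes it.
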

\begin{proof}
Pick any $x_*\in S_*$,  Proposition \ref{prop1}(i) tells us that
\begin{align}\label{antes-sum}\nonumber
0\ge(f+g)(x_*)-(f+g)(x^{\ell+1})&\ge\frac{1}{2\alpha_\ell}\Big(
\|x^{\ell+1}-x_*\|^2-\|x^\ell-x_*\|^2+(1-2\delta)\|x^\ell-x^{\ell+1}\|^2\Big)\\
&\ge\disp  \frac{1}{2\alpha_\ell}( \|x^{\ell+1}-x_*\|^2-\|x^\ell-x_*\|^2)
\end{align}
for any $\ell\in \NN$. Since $\al_\ell\ge
\al$, we get from  \eqref{antes-sum} that
\begin{equation}\label{30}
0\ge(f+g)(x_*)-(f+g)(x^{\ell+1})\ge \frac{1}{2\alpha}(
\|x^{\ell+1}-x_*\|^2-\|x^\ell-x_*\|^2).
\end{equation}
 Summing the above
inequality over $\ell=0,1,\ldots,k-1$ implies that
\begin{equation*}
k(f+g)(x_*)-\sum_{\ell=0}^{k-1}(f+g)(x^{\ell+1})\ge\frac{1}{2\alpha}(
\|x^{k}-x_*\|^2-\|x^0-x_*\|^2).
\end{equation*}
Since $(f+g)(x^{\ell})$ is decreasing by Proposition~\ref{prop1}(ii), the latter  yields
\begin{equation}\label{dis}
k[(f+g)(x^k)-(f+g)(x_*)]\le\frac{1}{2\alpha }
\left(\|x_*-x^0\|^2-\|x^{k}-x_*\|^2\right)\le
\frac{1}{2\al}\|x_*-x^0\|^2.
\end{equation}
Note that no matter how we choose $x_*\in S$, the optimal
value $(f+g)(x_*)=\min_{x\in \HH}\,(f+g)(x)$ is fixed. Hence we get
from \eqref{dis} that
\begin{equation*}
(f+g)(x^k)-\min_{x\in \HH}\,(f+g)(x)\le\frac{1}{2\al}\inf_{y\in S_*}
\frac{\|y-x^0\|^2}{k}=\frac{1}{2\al}\frac{[{\rm
dist}(x^0,S_*)]^2}{k},
\end{equation*}
which verifies \eqref{rate} and completes the first part of the theorem.

Now suppose additionally that $\dim \HH<+\infty$, it follows from Theorem~\ref{new-cov} that the sequence $(x^k)_{k\in \NN}$
converges strongly  to some $x_*\in S_*$, \emph{i.e.}, $\|x^k-x_*\|\to 0$ as $k\to \infty$.  Take any $\ve>0$, we find $K\in \NN$ such that $\|x^k-x_*\|\le \ve$ for $k\ge K$.  For any $\ell\ge K$ we get from the fact $\|x^{\ell}-x_*\|\ge\|x^{\ell+1}-x_*\|$ and \eqref{antes-sum} that
 \begin{align}\label{32}\nonumber
0\ge(f+g)(x_*)-(f+g)(x^{\ell+1})&\ge \frac{1}{2\alpha_\ell}\big(
\|x^{\ell+1}-x_*\|+\|x^\ell-x_*\|\big)\cdot\big(
\|x^{\ell+1}-x_*\|-\|x^\ell-x_*\|\big)\\
&\ge \frac{1}{\alpha_\ell}\|x^\ell-x_*\|\big(
\|x^{\ell+1}-x_*\|-\|x^\ell-x_*\|\big)\\&\ge \frac{\ve}{\alpha}\big(
\|x^{\ell+1}-x_*\|-\|x^\ell-x_*\|\big).\nonumber
\end{align}
Now adding the above
inequality over $\ell=K,K+1,\ldots,K+k-1$ gives us  that
\begin{equation*}
 k(f+g)(x_*)-\sum_{\ell=K}^{K+k-1}(f+g)(x^{\ell+1})\ge\frac{\ve}{\alpha}(
\|x^{K+k}-x_*\|-\|x^K-x_*\|)\ge -\frac{\ve}{\alpha}\|x^K-x_*\|\ge  -\frac{\ve^2}{\alpha}.
\end{equation*}
Due to the decreasing property of  $(f+g)(x^{\ell})$ in Proposition~\ref{prop1}(ii), we get from the latter that 
\begin{equation*}
 k\big[(f+g)(x_*)-(f+g)(x^{K+k})\big]\ge -\frac{\ve^2}{\alpha}.
\end{equation*}
It follows that 
\begin{align*}
\limsup_{k\to\infty}\, k\big[(f+g)(x^{k})-(f+g)(x_*)\big]&=\limsup_{k\to\infty}\, (K+k)\big[(f+g)(x^{K+k})-(f+g)(x_*)\big]\\
&\le\limsup_{k\to\infty}\,\frac{K+k}{k}\cdot\frac{\ve^2}{\al}=\frac{\ve^2}{\al}. 
\end{align*}
Since this inequality holds for any $\ve>0$, we have 
\[
0\le\liminf_{k\to\infty}\, k\big[(f+g)(x^{k})-(f+g)(x_*)\big]\le\limsup_{k\to\infty}\, k\big[(f+g)(x^{k})-(f+g)(x_*)\big]\le 0,
\]
thanks to the fact that  $x_*\in S_*$. Hence we obtain 
$
\lim_{k\to\infty} k\big[(f+g)(x^{k})-(f+g)(x_*)\big]= 0,
$
which verifies \eqref{xrate} and completes the proof of theorem. 
\end{proof}
{It is worth mentioning that the rate $o(k^{-1})$ was obtained  \cite{Dong2, Dong3, guller} earlier when using the proximal point method to solve problem \eqref{prob} with $f\equiv 0$. \footnote{{This important observation is pointed out from by one of the referees}} Our result above could be considered  an extension of  some results in these papers, in particular, \cite[Corollary~3.1]{Dong2} to the more general framework of \eqref{prob} with linesearch.   When the the stepsizes are not bounded below by a positive constant, we discuss the possible validity of the same complexity as follows.}
%%%%%%%%%%%%%%%%%%%%%%%%%%%%%%%%%%%%%
\begin{remark}\label{remark-4.1}{\rm  The main question arising from the above theorem is that: Can we have the complexity $o(k^{-1})$ of the difference $(f+g)(x^k)-\min_{x\in\HH}(f+g)(x)$ when $\liminf_{k\to \infty}\al_k=0$? Suppose that $(x^k)_{k\in\NN}$ (strongly) converges to some $x^*\in S_*$ in finite dimension; see our Theorem~\ref{new-cov}. By analyzing carefully the proof of \eqref{rate} in Theorem \ref{l-rate}, we observe that  complexity $o(k^{-1})$ remains when the following condition holds: there exists $\lambda\in [-1,1)$ such that 
\begin{equation}\label{cond-weak-alpha}
\limsup_{k\to\infty}\frac{\|x^k-x_*\|^{1+\lambda}}{\al_k}<+\infty, 
\end{equation}
which may allow $\al_k$ to approach $0$. Indeed, suppose that \eqref{cond-weak-alpha} is satisfied with some $\lambda\in [-1,1)$, we find $C>0$ and $K\in \NN$ such that $\|x^k-x_*\|^{1+\lambda}\le C{\al_k}$ for all $k>K$. For any $\ve\in (0,1)$, there exists $K_1>K$ such that $\|x^k-x_*\|<\ve$ for all $k\ge K_1$. Moreover, it is easy to prove the existence of some constant $D\ge1$ so that 
\begin{equation}
\label{new-eq-rho}
(\rho^2-1)\le 2D\rho^{1+\lambda}(\rho^{1-\lambda}-1)\quad \mbox{for all}\quad \rho\ge 1. 
\end{equation}
Note again that $\disp\frac{\|x^k-x_*\|}{\|x^{k+1}-x_*\|}\ge 1$ due to the Fej\'er property of $(x^k)_{k\in \NN}$ in Theorem~\ref{new-cov}{(i)}. This together with \eqref{new-eq-rho} tells us that
\[
\|x^k-x^*\|^2-\|x^{k+1}-x^*\|^2\le 2D \|x^k-x^*\|^{1+\lambda}(\|x^k-x^*\|^{1-\lambda}- \|x^{k+1}-x^*\|^{1-\lambda}).
\]
Hence for any $\ell>K_1$ we get from \eqref{antes-sum} that   
 \begin{eqnarray*}\begin{array}{ll}
0\ge(f+g)(x_*)-(f+g)(x^{\ell+1})&\disp\ge D\,\frac{\|x^\ell-x_*\|^{1+\lambda}}{\alpha_\ell}\big(
\|x^{\ell+1}-x_*\|^{1-\lambda}-\|x^\ell-x_*\|^{1-\lambda}\big)\\
&\disp\ge CD\big(
\|x^{\ell+1}-x_*\|^{1-\lambda}-\|x^\ell-x_*\|^{1-\lambda}\big). 
\end{array}
\end{eqnarray*}
By adding the above
inequality over $\ell=K_1,K_1+1,\ldots,K_1+k-1$, we have 
\begin{equation*}
 k(f+g)(x_*)-\sum_{\ell=K_1}^{K_1+k-1}(f+g)(x^{\ell+1})\ge\disp CD\big(
\|x^{k+K_1}-x_*\|^{1-\lambda}-\|x^{K_1}-x_*\|^{1-\lambda}\big).
\end{equation*}
Due to the decreasing property of  $(f+g)(x^{\ell})$ in Proposition~\ref{prop1}(ii),  the latter implies that 
\begin{equation*}
 k\big[(f+g)(x_*)-(f+g)(x^{K_1+k})\big]\ge -CD\,\|x^{K_1}-x_*\|^{1-\lambda}\ge-CD\ve^{1-\lambda}.
\end{equation*}
Thus we derive the following expressions 
\begin{align*}
\limsup_{k\to\infty}\, k\big[(f+g)(x^{k})-(f+g)(x_*)\big]&=\limsup_{k\to\infty}\, (K_1+k)\big[(f+g)(x^{K_1+k})-(f+g)(x_*)\big]\\
&\le\limsup_{k\to\infty}\,\frac{K_1+k}{k}CD\ve^{1-\lambda}=CD\ve^{1-\lambda}. 
\end{align*}
Since this inequality holds for any $\ve>0$, we have 
$
\limsup_{k\to\infty}\, k\big[(f+g)(x^{k})-(f+g)(x_*)\big]\le 0,
$
which also verifies \eqref{xrate} due to the fact that $x_*\in S_*$. \lqqd}
\end{remark}\vspace*{-0.2in}
It is clear that \eqref{cond-weak-alpha} holds when $\al_k$ is bounded below by a positive number. The following simple example shows the possible validity of \eqref{cond-weak-alpha} even when $\al_k\to 0$ as $k\to\infty$. Thus the complexity $o(k^{-1})$ of the function values remains true in the example below. However, in general, checking  \eqref{cond-weak-alpha} may be not trivial, since $x_*$ is  unknown.
\begin{example}
{\rm Let 
\[
f(x):=\frac{1}{1+p}\,|x|^{1+p}\quad \mbox{with}\; 0<p<1\quad \mbox{and}\quad g(x)=\delta_{[0,\infty)}(x).
\] 
Then a unique solution for problem \eqref{prob} is $x_*=0$. Note further that for any $x>0$, we have  
\begin{eqnarray}\label{xy}
J(x,\alpha)=P_{[0,+\infty)}(x-\alpha x^p)=\max\{x-\alpha x^p,0\}.
\end{eqnarray}
To distinguish the iteration from the exponent in this example, we write $(x_k)_{k\in \NN}$ instead of $(x^k)_{k\in \NN}$. To avoid the trivial case, suppose that $x_k> 0$ for all $k\in \NN$, then we have $$0<x_{k+1}= x_k-\al_k (x_k)^p<x_k.$$ It follows from {\bf Linesearch~1} that 
\begin{equation}\label{ab}
\al_k|x_{k+1}^p-x_k^p|\le \delta|x_{k+1}-x_k|.
\end{equation}
By mean value theorem, there exists $\eta\in[0,1]$ such that 
\begin{equation*}
|x_{k+1}^p-x_k^p|=|x_{k+1}-x_k|\cdot p|\eta\, x_{k+1}+(1-\eta)x_k|^{p-1} \ge  |x_{k+1}-x_k|\cdot p|x_k|^{p-1}.
\end{equation*}
This together with \eqref{ab} gives us  $\al_k\le \delta p^{-1}|x_k|^{1-p}\to 0$ as $k\to \infty$, since $x_k\to 0$. Therefore, we may suppose without loss of generality that $\al_k<\sigma$ for all $k$. Define $\displaystyle\hat \al_k:=\frac{\al_k}{\theta}$ and $\hat x_{k+1}=J(x_k,\hat \al_k)$, it follows from the {\bf Linesearch~1} that 
\begin{equation}\label{abb}
\hat\al_k|\hat x_{k+1}^p-x_k^p|> \delta|\hat x_{k+1}-x_k|.
\end{equation}
Note that $0\le \hat x_{k+1}<x_k$ by \eqref{xy} and that 
$$
0\le x_k^p-\hat x_{k+1}^p\le x_k^p-x_{k}^{p-1}{\hat x_{k+1}}=x_{k}^{p-1}(x_k-{\hat x_{k+1}}).
$$
Combining this with \eqref{abb} gives us that 
$\displaystyle
\frac{\al_k}{\theta} |x_{k}|^{(p-1)}|x_k-{\hat x_{k+1}}|\ge \delta|\hat x_{k+1}-x_k|>0,
$
which implies that $\displaystyle\frac{|x_k-x_*|^{1-p}}{\al_k}\le \frac{1}{\theta}$. This is exactly \eqref{cond-weak-alpha} with $\lambda=-p\in [-1,1)$.\lqqd}
\end{example}\vspace*{-0.2in}
Another natural question from Theorem~\ref{l-rate} is that in which class of functions the stepsizes $\al_k$ are bounded below by a positive number. Next we show that this condition is satisfied under some mild  Lipschitz continuity assumption of $\nabla f$. The first part of this result is not much surprising due to the similar achievement in \cite[Theorem~3.4(a)]{tseng}.
However, the second part is a significant improvement when we replace the global Lipschitz continuity by the local one in finite dimensions.
\begin{proposition}\label{lema-alpha}
Let $(\alpha_k)_{k\in \NN}$ be the sequence generated by {\bf
Linesearch~\ref{boundary}} on {\bf Method \ref{A1}}. The following statements hold:
\item {\bf (i)} If the gradient
of $f$ is globally Lipschitz continuous on $\dom g$ with constant $L>0$, then
$\alpha_k\ge \min\left\{\sigma, \frac{\delta\theta}{L}\right\}$ for
all $k\in \NN$.

\item {\bf (ii)} Suppose that $\dim \HH<+\infty$ and $S_*\neq\emptyset$. If $\nabla f$ is locally Lipschitz continuous
at any $x\in S_*$ then there exists  $x_*\in S_*$ such that
$$
\liminf_{k\to\infty}\al_k\ge
\min\Big\{\sigma,\frac{\delta\theta}{\mathcal{L}}\Big\},
$$
where $\mathcal{L}>0$ is a Lipschitz constant of $\nabla f$
around $x_*$. Consequently, there exists $\al>0$ such that $\al_k\ge
\al$ for all $k\in \NN$.
\end{proposition}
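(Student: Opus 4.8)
The plan is to exploit the backtracking structure of {\bf Linesearch~\ref{boundary}}: if $\alpha_k<\sigma$, then at least one contraction has occurred, so the immediately preceding trial value $\hat\alpha_k:=\alpha_k/\theta$ must have \emph{violated} the stopping test, i.e.
\[
\hat\alpha_k\big\|\nabla f(J(x^k,\hat\alpha_k))-\nabla f(x^k)\big\|>\delta\big\|J(x^k,\hat\alpha_k)-x^k\big\|.
\]
For part {\bf (i)} I would bound the left-hand norm difference from above by global Lipschitz continuity, $\|\nabla f(J(x^k,\hat\alpha_k))-\nabla f(x^k)\|\le L\|J(x^k,\hat\alpha_k)-x^k\|$. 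The violated inequality forces $J(x^k,\hat\alpha_k)\neq x^k$ (otherwise both sides vanish and the strict inequality fails), so dividing by the common factor $\|J(x^k,\hat\alpha_k)-x^k\|$ yields $\hat\alpha_k L>\delta$, hence $\alpha_k=\theta\hat\alpha_k>\delta\theta/L$. Combining this with the remaining case $\alpha_k=\sigma$ gives $\alpha_k\ge\min\{\sigma,\delta\theta/L\}$ for every $k\in\NN$.

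For part {\bf (ii)} I would run the same dichotomy, but global Lipschitz continuity is no longer available; only a constant $\mathcal{L}$ valid on some neighborhood $U$ of a limit point $x_*$ may be used. First, since $\dim\HH<+\infty$ and $S_*\neq\emptyset$, Theorem~\ref{new-cov}{\bf (i)} ensures that $(x^k)_{k\in\NN}$ converges strongly to some $x_*\in S_*$, around which $\nabla f$ is $\mathcal{L}$-Lipschitz on $U$; consequently $x^k\in U$ for all large $k$. The delicate point is the trial point $\hat x^k:=J(x^k,\hat\alpha_k)$, which must also lie in $U$ before the Lipschitz estimate can be applied, and $\hat x^k$ is not obviously close to $x_*$.

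The main obstacle, namely controlling $\hat x^k$, is where Lemma~\ref{lema-prop} enters: because $\hat\alpha_k>\alpha_k$, it gives
\[
\|x^k-\hat x^k\|\le\frac{\hat\alpha_k}{\alpha_k}\|x^k-J(x^k,\alpha_k)\|=\theta^{-1}\|x^k-x^{k+1}\|,
\]
and the proof of Theorem~\ref{new-cov} already established that $\|x^k-x^{k+1}\|\to0$. Hence $\hat x^k\to x_*$ as well, so both $x^k,\hat x^k\in U$ eventually, and repeating the computation of part {\bf (i)} with $\mathcal{L}$ in place of $L$ yields $\alpha_k>\delta\theta/\mathcal{L}$ whenever $\alpha_k<\sigma$ and $k$ is large. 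Therefore $\liminf_{k\to\infty}\alpha_k\ge\min\{\sigma,\delta\theta/\mathcal{L}\}>0$. For the closing \emph{consequently} claim, this strictly positive $\liminf$ provides an index $K$ and a constant $c>0$ with $\alpha_k\ge c$ for all $k\ge K$; taking $\alpha:=\min\{\alpha_0,\ldots,\alpha_{K-1},c\}>0$, a minimum of finitely many strictly positive numbers, gives $\alpha_k\ge\alpha$ for all $k\in\NN$.
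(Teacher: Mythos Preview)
Your proposal is correct and follows essentially the same route as the paper's proof: the dichotomy $\alpha_k=\sigma$ versus $\alpha_k<\sigma$, the use of the violated linesearch inequality at $\hat\alpha_k=\alpha_k/\theta$, and the invocation of Lemma~\ref{lema-prop} to control $\hat x^k$ in part {\bf (ii)} all match the paper. The only cosmetic difference is that the paper makes the ``$\hat x^k\in U$'' step fully quantitative---choosing $K$ so that $\|x^k-x_*\|\le\theta\varepsilon/(2+\theta)$ and then computing $\|\hat x^k-x_*\|\le\theta^{-1}\|x^k-x^{k+1}\|+\|x^k-x_*\|\le\varepsilon$ directly from the triangle inequality---whereas you argue more qualitatively that $\|x^k-x^{k+1}\|\to0$ forces $\hat x^k\to x_*$; both arguments are valid and rest on the same ingredients.
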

\begin{proof}
To justify {\bf (i)}, suppose that $\nabla f$ is globally Lipschitz continuous with constant $L>0$.  If $\alpha_k<\sigma$, define $\hat{\alpha}_k:=\dsty
\frac{\alpha_k}{\theta}>0$ and $
\hat{x}^k:=J(x^k,\hat{\alpha}_k)$. It follows from  the
definition of {\bf Linesearch \ref{boundary}}  that
\begin{equation}\label{eq-no-a}
\hat{\alpha}_k\left\|\nabla f\big(\hat{x}^k\big)-\nabla
f(x^k)\right\|>\delta\left\|\hat{x}^k-x^k\right\|,
\end{equation}
which yields $\|\hat{x}^k-x^k\|\neq0$ for all $k\in\NN$. Moreover,  due to Lipschitz assumption on $\nabla f$, we get $ \|\nabla f(x^{k})-\nabla
f(\hat{x}^{k})\|\le L \|x^{k}-\hat{x}^{k}\|$ for all $k\in \NN. $
Combining the latter inequality with \eqref{eq-no-a} gives us that
$\hat \alpha_k L> \delta$, \emph{i.e.}, $ \alpha_k\ge \frac{\delta\theta}{L}
$ when $\alpha_k<\sigma$. This clearly verifies {\bf (i)}.

To justify the second part, we suppose that $\dim \HH<+\infty$, that
$S_*\neq\emptyset$, and that $f$ is locally Lipschitz continuous at
any point in $S_*$. By Theorem~\ref{new-cov}, $(x^k)_{k\in \NN}$
converges (strongly) to some $x_*\in S_*$. Due to the local Lipschitz continuity of $\nabla f$ at $x_*$, there exist $\ve, \mathcal{L}>0$ such that 
\begin{equation}\label{sL}
\|\nabla f(x)-\nabla f(y)\|\le \mathcal{L}\|x-y\|\quad \mbox{for all} \quad x,y\in \mathbb{B}_\ve(x_*),
\end{equation}
where $\mathbb{B}_\ve(x_*)$ is the closed ball in $\HH$ with center  $x_*$ and radius $\ve$. Since $(x^k)_{k\in \NN}$ is converging (strongly) to $x_*$, we find some $K\in \NN$ satisfying that
\begin{equation}\label{ssL}
\|x^k-x_*\|\le \frac{\theta\ve}{2+\theta}<\ve\quad \mbox{for all}\quad k>K
\end{equation}
with $\theta\in(0,1)$ defined in {\bf
Linesearch~\ref{boundary}}. Take any $k>K$, if $\al_k<\sigma$, similarly to the first part we  define
$\hat{\alpha}_k:=\frac{\alpha_k}{\theta}>0$ and $
\hat{x}^k:=J(x^k,\hat{\alpha}_k)$. Thus we also have
\eqref{eq-no-a}. It follows from
Lemma~\ref{lema-prop} that
\[
\|x^{k}-\hat x^{k}\|=\|x^{k}-J(x^{k},\hat \alpha_{k})\|\le \frac{\hat \al_{k}}{\al_{k}}\|x^{k}-J(x^{k},\al_{k})\|= \frac{1}{\theta}\|x^{k}-x^{k+1}\|,
\]
which together with \eqref{ssL} implies the following expression
\[
\|\hat x^k-x_*\|\le \| \hat x^k- x^k\|+\| x^k-x_*\|\le \frac{1}{\theta}\|x^{k}-x^{k+1}\|+\| x^k-x_*\|\le \frac{1}{\theta}\cdot \frac{2\theta\ve}{2+\theta}+\frac{\theta\ve}{2+\theta}=\ve. 
\]
Hence we have $\hat x^k\in \mathbb{B}_\ve(x_*)$ and derive from \eqref{sL} and \eqref{ssL} that 
$
\|\nabla f(x^k)-\nabla f(\hat x^k)\|\le \mathcal{L}\|x^k-\hat x^k\|.
$
Combining this with \eqref{eq-no-a} gives us that $\mathcal{L}\hat
\al_k\ge\delta$, \emph{i.e.}, $\al_k\ge\frac{\delta\theta}{\mathcal{L}}$. It follows that $\al_k\ge \min\big\{\sigma,\frac{\delta\theta}{\mathcal{L}}\big\}$ for all $k>K$.

\noindent Finally, since $\al_k>0$ for $k\in \NN$, we obtain that
$
\al_k\ge
\al:=\min\left\{\al_1,\ldots,\al_{K},\frac{\delta\theta}{\mathcal{L}},\sigma\right\}>0
$
 and ensure the last part of the proposition. The proof is complete. 
\end{proof}
 It is worth recalling that the assumption of Proposition~\ref{lema-alpha}(i) that $\nabla f$ is globally Lipschitz continuous on $\dom g$
is also sufficient for  Assumption {\bf A2}. Assumptions of
Proposition~\ref{lema-alpha}(ii) are certainly not enough to
guarantee Assumption {\bf A2}. However, there are many broad classes of
functions satisfying all of them. For instance, when $\dim
\HH<+\infty$ and $\dom g$ is closed, a function $f$, which is
differentiable with locally Lipschitz continuous  gradient on
$\dom g$  satisfies all the requirements; see also
Proposition~\ref{pop}.

Theorem~\ref{l-rate} together with Proposition~\ref{lema-alpha} and Theorems~\ref{with-strong} leads us to the following result. Unlike \cite[Theorem~1.1]{beck-teu}, we obtain  better complexity $o(k^{-1})$ with linesearches in finite dimensions for a broader class of functions.    
\begin{corollary}\label{with-Lip}
Let $(x^k)_{k\in \NN}$ be the sequence generated by
{\bf Method~\ref{A1}}. Suppose that $S_*\neq \emptyset$. 

\item {\bf (i)} If the gradient of $f$ is globally Lipschitz continuous  on $\dom g$, then we have 
\[
(f+g)(x^k)-\min_{x\in \HH}\,(f+g)(x)=\mathcal{O}(k^{-1}).
\]
\item {\bf(ii)} If $\dim \HH<+\infty$ and the gradient of $f$ is locally  Lipschitz continuous  on
$S_*$,  then we have
\[
(f+g)(x^k)-\min_{x\in \HH}\,(f+g)(x)=o(k^{-1}).
\]
\end{corollary}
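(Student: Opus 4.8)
The plan is to read this corollary as a direct splice of the two preceding results: Proposition~\ref{lema-alpha}, which converts a Lipschitz hypothesis on $\nabla f$ into a uniform positive lower bound on the stepsizes, and Theorem~\ref{l-rate}, which turns such a lower bound into the advertised complexity rates. No new estimate should be needed; the entire task is to match hypotheses correctly and then cite the appropriate conclusion of Theorem~\ref{l-rate}.

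For part~{\bf (i)}, I would first invoke Proposition~\ref{lema-alpha}(i). Its hypothesis---global Lipschitz continuity of $\nabla f$ on $\dom g$ with some constant $L>0$---is exactly what is assumed here, so it yields $\alpha_k\ge\min\{\sigma,\delta\theta/L\}=:\alpha>0$ for all $k\in\NN$. Since $S_*\neq\emptyset$ and the stepsizes are now bounded below by this $\alpha$, the hypotheses of Theorem~\ref{l-rate} are satisfied, and its estimate \eqref{rate} reads $(f+g)(x^k)-\min_{x\in\HH}(f+g)(x)\le \frac{1}{2\alpha}\,\frac{[{\rm dist}(x^0,S_*)]^2}{k}$, which is precisely $\mathcal{O}(k^{-1})$.

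For part~{\bf (ii)}, the argument is parallel but calls the finite-dimensional half of each result. Under $\dim\HH<+\infty$ together with local Lipschitz continuity of $\nabla f$ at every point of $S_*$, Proposition~\ref{lema-alpha}(ii) again furnishes some $\alpha>0$ with $\alpha_k\ge\alpha$ for all $k\in\NN$. Feeding this lower bound, along with $\dim\HH<+\infty$, into the second conclusion \eqref{xrate} of Theorem~\ref{l-rate} gives $\lim_{k\to\infty}k\big[(f+g)(x^k)-\min_{x\in\HH}(f+g)(x)\big]=0$, that is, the sharper rate $o(k^{-1})$.

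The only subtlety worth flagging is that the passage to $o(k^{-1})$ in part~{\bf (ii)} genuinely relies on the strong convergence $x^k\to x_*$ available in finite dimensions; this is already built into the proof of \eqref{xrate} and into Proposition~\ref{lema-alpha}(ii), so no extra bookkeeping is required. In summary, the proof amounts to two short citations, one per part, and I anticipate no real obstacle beyond correctly pairing the global versus local Lipschitz hypotheses with the $\mathcal{O}$ versus $o$ conclusions of Theorem~\ref{l-rate}.
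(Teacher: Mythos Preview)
Your proposal is correct and matches the paper's own treatment: the corollary is presented there as an immediate consequence of Proposition~\ref{lema-alpha} (to secure the uniform lower bound on $\alpha_k$) and Theorem~\ref{l-rate} (to extract the $\mathcal{O}(k^{-1})$ and $o(k^{-1})$ rates), with no additional argument supplied. Your pairing of the global/local Lipschitz hypotheses with parts (i)/(ii) of each cited result is exactly right.
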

We obtain linear convergence when the stepsizes are bounded below
by a positive number and either $f$ or $g$ is strongly convex. Recall
that $h:\HH\to\overline{\RR}$ is strongly convex with constant
$\mu>0$ if,
$$h(x)\ge h(y)+\la
v,x-y\ra+\frac{\mu}{2}\|x-y\|^2\quad \mbox{for all}\quad x\in \HH,\, (y,v)\in {\rm Gph}\, \partial h.$$
\begin{theorem}\label{with-strong}
Let $(x^k)_{k\in \NN}$ and $(\al_k)_{k\in \NN}$ be the sequences
generated in {\bf Method~\ref{A1}}. Suppose that $S_*\neq\emptyset$,
that there exists $\al>0$ satisfying $\alpha_k\ge\alpha>0$
for all $k\in \NN$, and that either $f$ or $g$ is strongly convex
with constant $\mu>0$. Then $S_*=\{x_*\}$ is singleton and
\begin{equation}\label{linear-s}
\|x^{k+1}-x_*\|\le \frac{1}{\sqrt{1+\alpha\mu}}\cdot\|x^k-x_*\|\le
\left(\frac{1}{\sqrt{1+\alpha\mu}}\right)^{k+1}\|x^0-x_*\|\quad
\forall\, k\in \NN,
\end{equation}
i.e., the sequence $(x^k)_{k\in \NN}$ converges (strongly) to $x_*$ with the
linear rate $\dsty\frac{1}{\sqrt{1+\alpha\mu}}<1$.

Consequently,  if either $f$ or $g$ is strongly convex,  $\nabla f$ is locally Lipschitz continuous on $S_*$, and $\dim \HH<+\infty$, then $(x^k)_{k\in \NN}$ converges linearly to the unique optimal solution.
\end{theorem}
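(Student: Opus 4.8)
The plan is to combine the Fej\'er-type estimate of Proposition~\ref{prop1}(i) evaluated at the optimal point with strong convexity, so as to extract a genuine contraction factor rather than just monotone decrease. First I would record that if either $f$ or $g$ is strongly convex with constant $\mu$, then the sum $f+g$ is itself $\mu$-strongly convex in exactly the subgradient form of the paper's definition: using $\partial(f+g)=\nabla f+\partial g$ and adding the strong convexity inequality for the strongly convex summand to the ordinary subgradient inequality \eqref{sub-inq} for the other, one obtains, for every $(y,v)\in {\rm Gph}\,\partial(f+g)$,
\[
(f+g)(x)\ge (f+g)(y)+\la v, x-y\ra+\frac{\mu}{2}\|x-y\|^2.
\]
In particular $f+g$ admits a unique minimizer, which already gives that $S_*=\{x_*\}$ is a singleton.

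Second, since $x_*\in S_*$ we have $0\in\partial(f+g)(x_*)$, so $(x_*,0)\in {\rm Gph}\,\partial(f+g)$. Applying the strong convexity inequality above with $y=x_*$, $v=0$, and $x=x^{k+1}$ yields the key lower bound
\[
(f+g)(x^{k+1})-(f+g)(x_*)\ge \frac{\mu}{2}\|x^{k+1}-x_*\|^2.
\]

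Third, I would invoke Proposition~\ref{prop1}(i) with $x=x_*$ and substitute this lower bound into the term $2\alpha_k[(f+g)(x^{k+1})-(f+g)(x_*)]$. Because $\delta\in(0,1/2)$, the remaining summand $(1-2\delta)\|x^{k+1}-x^k\|^2$ is nonnegative and can be discarded, leaving
\[
\|x^k-x_*\|^2-\|x^{k+1}-x_*\|^2\ge \alpha_k\mu\,\|x^{k+1}-x_*\|^2.
\]
Rearranging gives $(1+\alpha_k\mu)\|x^{k+1}-x_*\|^2\le\|x^k-x_*\|^2$, and the hypothesis $\alpha_k\ge\alpha$ then produces the per-step contraction $\|x^{k+1}-x_*\|\le (1+\alpha\mu)^{-1/2}\|x^k-x_*\|$. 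A one-line induction upgrades this to the geometric bound in \eqref{linear-s}, and since $(1+\alpha\mu)^{-1/2}<1$ the strong (linear) convergence of $(x^k)_{k\in\NN}$ to $x_*$ follows at once.

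Finally, for the \emph{consequently} clause I would observe that strong convexity already forces $S_*=\{x_*\}$, so ``local Lipschitz continuity of $\nabla f$ on $S_*$'' is simply continuity near the single point $x_*$; Proposition~\ref{lema-alpha}(ii) then supplies a constant $\alpha>0$ with $\alpha_k\ge\alpha$ for all $k$, and the first part applies verbatim. The argument is essentially a direct substitution, so I anticipate no serious obstacle; the only point demanding genuine care is verifying the strong convexity of $f+g$ in precisely the subgradient form of the stated definition (so that it may legitimately be applied at $(x_*,0)$), together with keeping straight which summand carries the modulus $\mu$.
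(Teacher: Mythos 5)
Your proposal is correct and follows essentially the same route as the paper: apply Proposition~\ref{prop1}(i) at $x=x_*$, discard the nonnegative term $(1-2\delta)\|x^{k+1}-x^k\|^2$, insert the strong-convexity lower bound $(f+g)(x^{k+1})-(f+g)(x_*)\ge \frac{\mu}{2}\|x^{k+1}-x_*\|^2$, and use $\alpha_k\ge\alpha$ to obtain the contraction factor $(1+\alpha\mu)^{-1/2}$. Your extra care in verifying that $f+g$ is $\mu$-strongly convex in the paper's subgradient-form definition (via the sum rule $\partial(f+g)=\nabla f+\partial g$), and your explicit handling of the final clause via Proposition~\ref{lema-alpha}(ii), only make explicit what the paper leaves implicit.
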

\begin{proof}
Since  either $f$ or $g$ is strongly convex with constant $\mu>0$,  $f+g$ is
also strongly convex with constant $\mu>0$. It follows that $S_*$ is singleton
(\emph{i.e.}, $S_*=\{x_*\}$). Moreover, using Proposition
\ref{prop1}(i) with $x=x_*$ and the strong convexity of $f+g$ gives
us that
\begin{align*}
\|x^{k}-x_*\|^2\ge&
\|x^{k+1}-x_*\|^2+2\alpha_k[(f+g)(x^{k+1})-(f+g)(x_*)]\\\nonumber\ge&
\|x^{k+1}-x_*\|^2 +
\alpha_k\mu\|x^{k+1}-x_*\|^2\ge(1+\alpha\mu)\|x^{k+1}-x_*\|^2.
\end{align*}
It follows that
$$
\|x^{k+1}-x_*\|\le \frac{1}{\sqrt{1+\alpha\mu}}\cdot\|x^{k}-x_*\|\le
\left(\frac{1}{\sqrt{1+\alpha\mu}}\right)^{k+1}\|x^0-x_*\|,
$$
which verifies \eqref{linear-s} and thus completes the proof of the theorem.
\end{proof}

Since the condition $x=J(x,\alpha)$ for $\alpha>0$ is necessary and
sufficient for $x$ to be an optimal solution to problem
\eqref{prob}, it is interesting to study the complexity of
$\|x^k-J(x^k,\al_k)\|$ in our {\bf Method~\ref{A1}}.  The velocity
of the convergence obtained below is not affected by the behavior of
the stepsizes $\alpha_k$.
\begin{theorem}\label{no-convexo}
Let $(x^k)_{k\in \NN}$ and $(\al_k)_{k\in \NN}$ be the sequences
generated from {\bf Method~\ref{A1}}. Then we have
\begin{equation}\label{com}
\liminf_{k\rightarrow\infty}
\sqrt{k}\,\cdot\|x^k-J(x^k,\alpha_k)\|=0.
\end{equation}
\end{theorem}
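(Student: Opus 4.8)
The plan is to observe first that, since $x^{k+1}=J(x^k,\alpha_k)$ by \eqref{F-B}, the quantity to be estimated is nothing but $\|x^k-x^{k+1}\|$; thus the claim \eqref{com} is equivalent to $\liminf_{k\to\infty}\sqrt{k}\,\|x^{k+1}-x^k\|=0$. The engine of the proof is Proposition~\ref{prop1}(ii), which already controls the step length by the decrease of the cost. The only extra ingredient I need about the stepsizes is the trivial upper bound $\alpha_k\le\sigma$, valid for every $k$ because {\bf Linesearch~\ref{boundary}} starts at $\alpha=\sigma$ and only multiplies by $\theta\in(0,1)$. Combining this with Proposition~\ref{prop1}(ii) gives the key per-iteration inequality
\[
\frac{1-\delta}{\sigma}\,\|x^{k+1}-x^k\|^2\le\frac{1-\delta}{\alpha_k}\,\|x^{k+1}-x^k\|^2\le (f+g)(x^k)-(f+g)(x^{k+1}).
\]

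Next I would telescope this bound. Summing over $k=0,1,\dots,N-1$ collapses the right-hand side to $(f+g)(x^0)-(f+g)(x^N)$, so that
\[
\frac{1-\delta}{\sigma}\sum_{k=0}^{N-1}\|x^{k+1}-x^k\|^2\le (f+g)(x^0)-(f+g)(x^N).
\]
Provided $(f+g)(x^N)\ge\inf_{x\in\HH}(f+g)(x)>-\infty$, the right-hand side is bounded above uniformly in $N$, whence $\sum_{k=0}^{\infty}\|x^{k+1}-x^k\|^2<+\infty$. It then remains to apply the elementary fact that, for any sequence $(a_k)_{k\in\NN}$ of nonnegative reals with $\sum_{k}a_k<+\infty$, one has $\liminf_{k\to\infty}k\,a_k=0$: indeed, were $k\,a_k\ge c>0$ for all large $k$, then $a_k\ge c/k$ would force $\sum_k a_k=+\infty$. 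Taking $a_k:=\|x^{k+1}-x^k\|^2$ yields $\liminf_{k\to\infty}k\,\|x^{k+1}-x^k\|^2=0$, and since the square root is continuous and monotone on $[0,\infty)$ this is exactly \eqref{com}.

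The genuine obstacle is the finiteness of the optimal value used to bound the telescoped sum: the whole argument rests on $\inf_{x\in\HH}(f+g)(x)>-\infty$, which is automatic as soon as $S_*\neq\emptyset$ and, more generally, whenever the problem has a finite infimum. When the infimum is $-\infty$ the cost values $(f+g)(x^k)$ diverge to $-\infty$, the sum $\sum_k\|x^{k+1}-x^k\|^2$ may fail to converge, and the route above breaks down; so the estimate \eqref{com} is to be understood in the regime of finite optimal value. Two further points deserve care but are routine: that one really may replace $\alpha_k$ by $\sigma$ (this uses only the monotonicity of the linesearch, not any lower bound on $\alpha_k$, which is precisely why \eqref{com} is insensitive to the behavior of the stepsizes), and that a $\liminf$ — rather than a full limit — is the sharpest conclusion available here, since the elementary lemma produces only a subsequence along which $\sqrt{k}\,\|x^{k+1}-x^k\|\to 0$.
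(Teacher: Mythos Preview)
Your proposal is correct and follows essentially the same approach as the paper: both use $\alpha_k\le\sigma$ together with Proposition~\ref{prop1}(ii) to obtain $\sum_k\|x^k-J(x^k,\alpha_k)\|^2<+\infty$, and then invoke the elementary fact that summability of nonnegative terms forces $\liminf_{k}k\,a_k=0$ (the paper phrases this last step as a contradiction with the divergence of the harmonic series, while you state it directly). You also correctly flag that the argument implicitly relies on $\inf(f+g)>-\infty$, which the paper's proof uses via the appearance of $(f+g)(x_*)$.
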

\begin{proof}
If \eqref{com} does not hold, then we may find a number $\ve>0$ such
that for some fixed $K\in\NN$ large enough, we have
$\dsty\|x^k-J(x^k,\alpha_k)\| \ge \frac{\varepsilon}{\sqrt{k}}$ for
all $k\ge K$. Thus,
\begin{equation}\label{ultima-eq}
\sum_{k=K}^\infty \|x^k-J(x^k,\alpha_k)\|^2 \ge\varepsilon^2
\sum_{k=K}^\infty\frac{1}{k}=+\infty. \end{equation} On the
other hand, using \eqref{F-B} and Proposition \ref{prop1}(ii), we
get, for all $k\ge K$,
\begin{align*}
\|x^k-J(x^k,\alpha_k)\|^2&=\|x^k-x^{k+1}\|^2\le
\frac{\alpha_k}{1-\delta}\left[(f+g)(x^k)-(f+g)(x^{k+1})\right]\\&\le\frac{\sigma}{1-\delta}\left[(f+g)(x^k)-(f+g)(x^{k+1})\right],
\end{align*}
where we have used in the last inequality that
$\alpha_k\le\sigma$ for all $k\in\NN$, which follows from  {\bf Linesearch \ref{boundary}}. Hence, we have
$$
\sum_{k=K}^\infty
\|x^k-J(x^k,\alpha_k)\|^2\le\frac{\sigma}{1-\delta}\left[(f+g)(x^{K})-(f+g)(x_*)\right]<+\infty,
$$ which contradicts \eqref{ultima-eq}. The proof is complete.
\end{proof}

%%%%%%%%%%%%%%%%%%%%%%%%%%%%%%%%%%%%%%%%%%%%%%%%%%%%%%%%%%%%%%%%%%%%%%%%%%%%%%%%%%%%

\subsection{A fast multistep forward-backward method with Linesearch \ref{boundary}}\label{s:4}
In the spirit of the classical work of Nesterov \cite{nesterov-1983}
many accelerated multistep versions have been proposed in the
literature for the forward-backward iteration, but to the best of our knowledge all of them have to employ the global Lipschitz continuity assumption on $\nabla f$; see,
\emph{e.g.},\cite{beck-teu, beck, nesterov-2013}. In this subsection, by following these ideas and assuming no Lipschitz continuity on
$\nabla f$, we present a fast version of the proximal
forward-backward method with {\bf Linesearch~\ref{boundary}},
improving the convergence result of Theorem \ref{l-rate} for {\bf
Method~\ref{A1}}. In \cite{beck-teu, beck, nesterov-2013} this kind of fast versions
usually demands Lipschitz assumption over $\nabla f$  to establish convergence of this method. Here we modify the method by adding a linesearch and an extra projection step in
\eqref{yk} below to avoid the requirements aforementioned. For simplicity, we suppose   $\Omega:=\dom g$ is closed in this section.
\begin{center}\fbox{\begin{minipage}[b]{\textwidth}
\begin{method}\label{A3}
\item [    ] {\bf Initialization Step.} Take $x^{-1}=x^0\in \dom g$, $t_0=1$, $\theta\in(0,1)$, $\alpha_{-1}=\sigma$  and $\delta\in(0,1/2)$.

\item [    ] \noindent {\bf Iterative Step.} Given $t_k$ and $x^k$, set
\begin{eqnarray}\label{tk+1}
t_{k+1}&=&\frac{1+\sqrt{1+4t_k^2}}{2} \\
y^{k}&=&x^{k}+\left(\frac{t_{k}-1}{t_{k+1}}\right)(x^{k}-x^{k-1}),
\quad \tilde{y}^k=P_\Omega(y^k)\label{yk}\\
x^{k+1}&=&J(\tilde{y}^k,\alpha_k):=\prox_{\alpha_k
g}(\tilde{y}^k-\alpha_k \nabla f(\tilde{y}^k)) \label{F-B-2}
\end{eqnarray}
with $\alpha_k:=$ {\bf Linesearch
\ref{boundary}}$(\tilde{y}^k,\alpha_{k-1},\theta,\delta)$.

\item [    ] \noindent  {\bf Stop Criteria.} If $x^{k+1}=\tilde{y}^k$, then stop.
\end{method}\end{minipage}}\end{center}
Note that from \eqref{yk} and \eqref{F-B-2}, $\tilde{y}^k$ and $x^k$
belong to $\dom g$ for all $k\in\NN$ and as a direct consequence of
Lemma~\ref{boundary-well}, $\alpha_k$ satisfying \eqref{conda-2} is
always positive and nonincreasing. Moreover, it is
similar to {\bf Method~\ref{A1}} that if $x^{k+1}=\tilde y^k$ then
$x^{k+1}$ is an optimal solution.
%To avoid finite termination of this method, we suppose from now onthat $x^{k+1}\neq \tilde y^k$ for any $k\in \NN$.
An important
inequality for our further study from {\bf
Linesearch~\ref{boundary}} is
\begin{equation}\label{conda-2}
\alpha_k\left\|\nabla f\big(x^{k+1}\big)-\nabla
f(\tilde{y}^k)\right\|\leq\delta\left\|x^{k+1}-\tilde{y}^k\right\|
\end{equation}
with  $\delta\in(0,1/2)$.   We also need some auxiliary results before establishing the convergence results.
\begin{lemma}\label{aux-1} The positive sequence $(t_k)_{k\in \NN}$ generated by {\bf Method~\ref{A3}} via \eqref{tk+1} satisfies,  for all $k\in
\NN$, \item {\bf (i)} $\dsty\frac{1}{t_k}\le \frac{2}{k+1}$;
\item {\bf (ii)} $t^2_{k+1}-t_{k+1}=t^2_k$.
\end{lemma}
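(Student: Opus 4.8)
The plan is to dispatch the two identities separately, beginning with the algebraic identity (ii), whose consequence feeds naturally into the growth estimate (i).

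For (ii) I would work directly from the defining recurrence \eqref{tk+1}. Rewriting it as $2t_{k+1}-1=\sqrt{1+4t_k^2}$ and squaring both sides gives $4t_{k+1}^2-4t_{k+1}+1=1+4t_k^2$; cancelling the $1$'s and dividing by $4$ yields exactly $t_{k+1}^2-t_{k+1}=t_k^2$. The one point worth a word of care is to record that $(t_k)_{k\in\NN}$ stays positive (indeed $t_0=1$ and the recurrence clearly produces $t_{k+1}>1$), so that the left-hand side $2t_{k+1}-1$ is nonnegative and the squaring step is reversible.

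For (i) I would avoid an induction through (ii) and instead extract a simple per-step increment. Since $t_k\ge 0$ we have $\sqrt{1+4t_k^2}\ge\sqrt{4t_k^2}=2t_k$, so \eqref{tk+1} gives $t_{k+1}=\frac{1+\sqrt{1+4t_k^2}}{2}\ge\frac{1+2t_k}{2}=t_k+\frac12$. Telescoping this bound from $t_0=1$ produces $t_k\ge 1+\frac{k}{2}=\frac{k+2}{2}\ge\frac{k+1}{2}$, and hence $\frac{1}{t_k}\le\frac{2}{k+1}$ for all $k\in\NN$, which is precisely (i).

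The argument is essentially computational, so there is no serious obstacle; the only thing to watch is the bookkeeping of positivity and monotonicity of $(t_k)_{k\in\NN}$, which is what legitimizes both the square-root manipulation in (ii) and the telescoped lower bound in (i). Should a self-contained induction for (i) be preferred instead, one would assume $t_k\ge\frac{k+1}{2}$, use (ii) in the form $t_{k+1}^2=t_{k+1}+t_k^2\ge t_{k+1}+\frac{(k+1)^2}{4}$, and solve the resulting quadratic inequality for $t_{k+1}$; but the increment estimate above is cleaner and even delivers a slightly stronger bound.
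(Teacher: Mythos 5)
Your proof is correct and is essentially the argument the paper intends: the paper's own proof is just the remark that the lemma ``easily follows by induction argument,'' and your telescoped increment bound $t_{k+1}\ge t_k+\tfrac12$ (which is the induction for (i)) together with the direct rewrite-and-square computation for (ii) is exactly that standard argument, written out in full with the positivity bookkeeping made explicit.
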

\begin{proof}
The proof easily follows by induction argument.
\end{proof}
\begin{proposition}\label{prop1-2} Let $\alpha_k$ be defined in {\bf Method~\ref{A3}}  and  $x\in \dom g$.
Then we have
\begin{equation}\label{nn}
(f+g)(x)-(f+g)(x^{k+1})\ge \frac{1}{2\alpha_k}\left(\|x^{k+1}-x\|^2-\|y^k-x\|^2\right)\quad \mbox{for all}\quad  k\in \NN.
\end{equation}
\end{proposition}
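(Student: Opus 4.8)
The plan is to mirror the proof of Proposition~\ref{prop1}(i), adapting it to the accelerated iterate in which the forward--backward operator is evaluated at the projected extrapolation point $\tilde{y}^k$ rather than at $x^k$. Concretely, I would first establish the sharper intermediate inequality with $\tilde{y}^k$ in place of $y^k$,
\begin{equation*}
(f+g)(x)-(f+g)(x^{k+1})\ge \frac{1}{2\alpha_k}\left(\|x^{k+1}-x\|^2-\|\tilde{y}^k-x\|^2\right),
\end{equation*}
and then deduce \eqref{nn} by passing from $\tilde{y}^k=P_\Omega(y^k)$ back to $y^k$ through the nonexpansiveness of the metric projection onto $\Omega=\dom g$.

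The first steps are the direct analogues of those in Proposition~\ref{prop1}. Since $x^{k+1}=J(\tilde{y}^k,\alpha_k)$, \eqref{in-sub} and \eqref{F-B-2} give $\frac{\tilde{y}^k-x^{k+1}}{\alpha_k}-\nabla f(\tilde{y}^k)\in\partial g(x^{k+1})$, so convexity of $g$ yields $g(x)-g(x^{k+1})\ge \la \frac{\tilde{y}^k-x^{k+1}}{\alpha_k}-\nabla f(\tilde{y}^k),\,x-x^{k+1}\ra$ for $x\in\dom g$, while convexity of $f$ yields $f(x)-f(\tilde{y}^k)\ge\la\nabla f(\tilde{y}^k),\,x-\tilde{y}^k\ra$. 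Adding these and collecting the gradient terms produces
\begin{equation*}
(f+g)(x)-g(x^{k+1})-f(\tilde{y}^k)\ge \frac{1}{\alpha_k}\la \tilde{y}^k-x^{k+1},\,x-x^{k+1}\ra+\la\nabla f(\tilde{y}^k),\,x^{k+1}-\tilde{y}^k\ra.
\end{equation*}

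The crucial new step, which replaces the \emph{Descent Lemma} used under a Lipschitz hypothesis, is to bound $f(\tilde{y}^k)+\la\nabla f(\tilde{y}^k),x^{k+1}-\tilde{y}^k\ra$ below by $f(x^{k+1})$ up to a controlled error. Combining convexity of $f$ at $x^{k+1}$ with the Cauchy--Schwarz inequality and the linesearch estimate \eqref{conda-2} gives $f(x^{k+1})\le f(\tilde{y}^k)+\la\nabla f(\tilde{y}^k),x^{k+1}-\tilde{y}^k\ra+\frac{\delta}{\alpha_k}\|x^{k+1}-\tilde{y}^k\|^2$. Substituting this lower bound and then expanding the inner product via the polarization identity $2\la \tilde{y}^k-x^{k+1},x-x^{k+1}\ra=\|\tilde{y}^k-x^{k+1}\|^2+\|x-x^{k+1}\|^2-\|\tilde{y}^k-x\|^2$ produces exactly the target right-hand side together with the residual $\frac{1-2\delta}{2\alpha_k}\|\tilde{y}^k-x^{k+1}\|^2$, which is nonnegative precisely because $\delta\in(0,1/2)$. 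Discarding this nonnegative residual yields the intermediate inequality.

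To finish, since $\Omega=\dom g$ is closed and convex (as assumed in this section) and $x\in\dom g=\Omega$, the obtuse-angle characterization of the projection gives $\|\tilde{y}^k-x\|^2=\|P_\Omega(y^k)-x\|^2\le\|y^k-x\|^2$; replacing $\|\tilde{y}^k-x\|^2$ by the larger $\|y^k-x\|^2$ in the negative term only weakens the bound and delivers \eqref{nn}. I expect the only genuine subtlety to be the descent estimate from \eqref{conda-2}: this is where the lack of a global Lipschitz assumption on $\nabla f$ is absorbed, and where the restriction $\delta<1/2$ is essential to keep the residual term nonnegative; the remaining manipulations are the same polarization and projection computations that appear in Proposition~\ref{prop1}.
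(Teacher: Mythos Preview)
Your proposal is correct and follows essentially the same route as the paper's proof: both combine the subgradient inequality for $g$ at $x^{k+1}$, the gradient inequality for $f$ at $\tilde{y}^k$, the linesearch estimate \eqref{conda-2} together with Cauchy--Schwarz, convexity of $f$ at $x^{k+1}$, the polarization identity, and finally the projection inequality $\|\tilde{y}^k-x\|\le\|y^k-x\|$. The only difference is cosmetic ordering---the paper first inserts $\nabla f(x^{k+1})$ and applies \eqref{conda-2}, then uses polarization, and only afterwards invokes convexity at $x^{k+1}$, whereas you package the latter two into a single ``descent-type'' bound $f(x^{k+1})\le f(\tilde{y}^k)+\la\nabla f(\tilde{y}^k),x^{k+1}-\tilde{y}^k\ra+\frac{\delta}{\alpha_k}\|x^{k+1}-\tilde{y}^k\|^2$ before expanding; the resulting inequalities and the role of $\delta<1/2$ are identical.
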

\begin{proof}
First note from \eqref{in-sub} with $z=\tilde{y}^k-\alpha_k \nabla
f(\tilde{y}^k)$ that $\dsty\frac{\tilde{y}^k-x^{k+1}}{\alpha_k}-\nabla f(\tilde{y}^k)\in\partial g(x^{k+1})$. Then,
\begin{equation}\label{eq2-2}g(x)-g(x^{k+1})\ge \left\la\frac{\tilde{y}^k-x^{k+1}}{\alpha_k}-\nabla f(\tilde{y}^k),x-x^{k+1}\right\ra
\end{equation}
for all $x\in \dom g$.
The convexity of $f$ implies that
\begin{equation}\label{eq3-2}
f(x)-f(y)\ge \la\nabla f(y), x-y\ra\quad \mbox{for all}\quad x\in \dom f\mbox{ and } y\in \dom g.
\end{equation}
 By summing \eqref{eq2-2}  and \eqref{eq3-2} with $y=\tilde{y}^k\in \Omega=\dom g$, we
obtain that
\begin{align*}(f+g)(x)\ge& f(\tilde{y}^k)+g(x^{k+1}) + \left\la\frac{\tilde{y}^k-x^{k+1}}{\alpha_k}-\nabla f(\tilde{y}^k),x-x^{k+1}\right\ra
+\la\nabla f(\tilde{y}^k), x-\tilde{y}^k\ra\\ =&
f(\tilde{y}^k)+g(x^{k+1}) +\frac{1}{\alpha_k} \left\la
\tilde{y}^k-x^{k+1},x-x^{k+1}\right\ra+\la\nabla f(\tilde{y}^k), x^{k+1}-\tilde{y}^k\ra\\
=&f(\tilde{y}^k)+g(x^{k+1}) +\frac{1}{\alpha_k} \left\la
\tilde{y}^k-x^{k+1},x-x^{k+1}\right\ra+\la\nabla
f(\tilde{y}^k)-\nabla f(x^{k+1}),
x^{k+1}-\tilde{y}^k\ra\\& +\la\nabla f(x^{k+1}),x^{k+1}-\tilde{y}^k\ra\\
\ge &f(\tilde{y}^k)+g(x^{k+1}) +\frac{1}{\alpha_k} \left\la
\tilde{y}^k-x^{k+1},x-x^{k+1}\right\ra- \frac{\delta}{\alpha_k}\|
x^{k+1}-\tilde{y}^k\|^2\\&+\la\nabla
f(x^{k+1}),x^{k+1}-\tilde{y}^k\ra,
\end{align*}
where the last inequality follows from \eqref{conda-2}.
Rearranging the inequality gives us that
\begin{align}\nonumber\label{eq5-2}
\la \tilde{y}^k-x^{k+1},x^{k+1}-x \ra\ge&\disp
\alpha_k[f(\tilde{y}^k)+g(x^{k+1})-(f+g)(x)]-\delta\|
x^{k+1}-\tilde{y}^k\|^2\\&\disp+ \alpha_k\la\nabla
f(x^{k+1}),x^{k+1}-\tilde{y}^k\ra.
\end{align}
Observe that
$
2\la \tilde{y}^k-x^{k+1},x^{k+1}-x
\ra=\|\tilde{y}^k-x\|^2-\|x^{k+1}-x\|^2-\|\tilde{y}^k-x^{k+1}\|^2.
$
By combining the above equality with \eqref{eq5-2}, we have
\begin{align}\nonumber\label{eq6-2}\|\tilde{y}^k-x\|^2-\|x^{k+1}-x\|^2\ge&2\alpha_k\left [f(\tilde{y}^k)+g(x^{k+1})-(f+g)(x)+
\la\nabla f(x^{k+1}),x^{k+1}-\tilde{y}^k\ra\right]\\
\nonumber &+ (1-2\delta)\|\tilde{y}^k-x^{k+1}\|^2\\
\ge&2\alpha_k\left[f(\tilde{y}^k)+g(x^{k+1})-(f+g)(x)+\la\nabla
f(x^{k+1}),x^{k+1}-\tilde{y}^k\ra\right].\end{align}
It follows from \eqref{eq3-2} with $x=\tilde{y}^k$ and $y=x^{k+1}$ that
$f(\tilde{y}^k)-f(x^{k+1})\ge\la\nabla f(x^{k+1}),\tilde{y}^k-x^{k+1}\ra$, which together with \eqref{eq6-2} implies
\begin{align*}
\|\tilde{y}^k-x\|^2-\|x^{k+1}-x\|^2\ge&2\alpha_k\left
[f(\tilde{y}^k)+g(x^{k+1})-(f+g)(x)+f(x^{k+1})-f(\tilde{y}^k)\right]\\
=&2\alpha_k\left[(f+g)(x^{k+1})-(f+g)(x)\right].
\end{align*}
Since $\|\tilde{y}^k-x\|\le\|{y}^k-x\|$ for all $x\in \dom g$ due to \eqref{yk}, we get from the latter \eqref{nn} and complete the proof of the proposition.
\end{proof}
In the next result we establish a better complexity
for {\bf Method~\ref{A3}} than {\bf Method~\ref{A1}} in  Theorem \ref{l-rate} under a similar assumption.
\begin{theorem}\label{l-rate-2}
Let $(x^k)_{k\in \NN}$ and $(\al_k)_{k\in \NN}$ be the sequences
generated in  {\bf Method~\ref{A3}}. Suppose that $S_*\neq\emptyset$ and there is $\al>0$ such that  $\alpha_k\ge\alpha>0$ for
all $k\in \NN$. Then we have
\begin{equation*}\label{rate-2}
(f+g)(x^k)-\min_{x\in \HH}\,(f+g)(x)\le
\frac{\dsty\frac{2}{\alpha}\cdot\left(
\|x^0-x_*\|^2+2\sigma\left[(f+g)(x^{0})-\dsty\min_{x\in
\HH}(f+g)(x)\right]\right)}{(k+1)^2} \quad \mbox{for all}\;\; k\in
\NN.
\end{equation*}
\end{theorem}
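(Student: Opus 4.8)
The plan is to mimic the Nesterov/FISTA potential-function argument, adapting it to the varying but nonincreasing stepsizes $\alpha_k$. Fix $x_*\in S_*$, so that $\min_{x\in\HH}(f+g)(x)=(f+g)(x_*)$, and write $v_k:=(f+g)(x^k)-(f+g)(x_*)\ge 0$. The engine is Proposition~\ref{prop1-2}, which I would invoke at the two points $x=x^k$ and $x=x_*$ to obtain
\[
v_k-v_{k+1}\ge\frac{1}{2\alpha_k}\big(\|x^{k+1}-x^k\|^2-\|y^k-x^k\|^2\big),\qquad -v_{k+1}\ge\frac{1}{2\alpha_k}\big(\|x^{k+1}-x_*\|^2-\|y^k-x_*\|^2\big).
\]
Multiplying the first inequality by $t_{k+1}-1\ge0$, adding the second, and then multiplying through by $2\alpha_k t_{k+1}>0$, the left-hand side becomes $2\alpha_k t_{k+1}\big[(t_{k+1}-1)v_k-t_{k+1}v_{k+1}\big]$; invoking Lemma~\ref{aux-1}(ii) in the form $t_{k+1}(t_{k+1}-1)=t_k^2$ turns this into $2\alpha_k\big(t_k^2 v_k-t_{k+1}^2 v_{k+1}\big)$.

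Next I would simplify the resulting right-hand side, which is the technical heart of the proof. Setting $u^k:=t_k x^k-(t_k-1)x^{k-1}-x_*$ (so that $u^0=x^0-x_*$, since $t_0=1$ and $x^{-1}=x^0$), I would show by completing the square and using $t_k^2=t_{k+1}(t_{k+1}-1)$ that the positive terms satisfy $t_k^2\|x^{k+1}-x^k\|^2+t_{k+1}\|x^{k+1}-x_*\|^2=\|u^{k+1}\|^2+(t_{k+1}-1)\|x^k-x_*\|^2$. The crucial observation is that the momentum rule \eqref{yk} yields $t_{k+1}(y^k-x^k)=(t_k-1)(x^k-x^{k-1})$, so the analogous manipulation of the negative terms produces exactly $\|u^k\|^2+(t_{k+1}-1)\|x^k-x_*\|^2$; the spurious quantities $(t_{k+1}-1)\|x^k-x_*\|^2$ then cancel, leaving
\[
2\alpha_k\big(t_k^2 v_k-t_{k+1}^2 v_{k+1}\big)\ge\|u^{k+1}\|^2-\|u^k\|^2.
\]

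Finally I would absorb the varying stepsizes into a potential function. Since $\alpha_k$ is positive and nonincreasing (as noted after \eqref{F-B-2}) and $t_k^2 v_k\ge0$, rearranging the last display and using $\alpha_{k-1}\ge\alpha_k$ shows that $\Phi_k:=2\alpha_{k-1}t_k^2 v_k+\|u^k\|^2$ is nonincreasing in $k$, whence $\Phi_k\le\Phi_0=2\sigma v_0+\|x^0-x_*\|^2$ (recall $\alpha_{-1}=\sigma$, $t_0=1$, $u^0=x^0-x_*$). Dropping $\|u^k\|^2\ge0$ and bounding $\alpha_{k-1}\ge\alpha$ give $2\alpha t_k^2 v_k\le\Phi_0$, and Lemma~\ref{aux-1}(i), i.e.\ $t_k^{-2}\le 4/(k+1)^2$, then delivers the claimed $\mathcal{O}(k^{-2})$ estimate. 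The main obstacle is the algebraic telescoping in the middle step: getting the $y^k$-terms to recombine into $\|u^k\|^2$ (which hinges on the precise momentum coefficient in \eqref{yk} together with Lemma~\ref{aux-1}(ii)) and then correctly threading the nonincreasing stepsizes through the potential $\Phi_k$, rather than relying on a constant stepsize as in the classical FISTA analysis.
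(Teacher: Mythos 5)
Your proposal is correct and takes essentially the same route as the paper's proof: both establish the per-iteration estimate
$2\alpha_k\bigl(t_k^2[(f+g)(x^k)-(f+g)(x_*)]-t_{k+1}^2[(f+g)(x^{k+1})-(f+g)(x_*)]\bigr)\ge\|u^{k+1}\|^2-\|u^k\|^2$ with $u^k:=t_kx^k-(t_k-1)x^{k-1}-x_*$, and then telescope the same potential (your $\alpha_{k-1}$ versus the paper's $\alpha_k$ in the potential is an immaterial index shift) using the nonincreasing stepsizes, $\alpha_0\le\sigma$, and Lemma~\ref{aux-1}. The only difference is presentational: you derive this estimate by applying Proposition~\ref{prop1-2} at the two points $x^k$ and $x_*$ and combining with weights $t_{k+1}-1$ and $1$ (the original Beck--Teboulle style, with the cross terms $(t_{k+1}-1)\|x^k-x_*\|^2$ cancelling), while the paper applies it once at the single point $t_{k+1}^{-1}x_*+(1-t_{k+1}^{-1})x^k$ and invokes convexity of $f+g$; the two derivations coincide via the identity $\lambda\|z-a\|^2+(1-\lambda)\|z-b\|^2=\|z-\lambda a-(1-\lambda)b\|^2+\lambda(1-\lambda)\|a-b\|^2$.
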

\begin{proof} To justify, pick any $x_*\in S_*$.
By Lemma \ref{aux-1}(i)  and the convexity of $g$, we have $t_{k+1}\ge 1$ and thus
$x:=t^{-1}_{k+1}x_*+(1-t^{-1}_{k+1})x^{k}\in \dom g$. Applying Proposition~\ref{prop1-2} for this $x$ gives us that
\begin{align*}
&\frac{1}{2\alpha_k}\left(\left\|x^{k+1}-\left(t^{-1}_{k+1}x_*+\left(1-t^{-1}_{k+1}\right)x^{k}\right)\right\|^2-
\left\|y^k-\left(t^{-1}_{k+1}x_*+\left(1-t^{-1}_{k+1}\right)x^{k}\right)\right\|^2\right)\\
&\le
(f+g)(t^{-1}_{k+1}x_*+\left(1-t^{-1}_{k+1}\right)x^{k})-(f+g)(x^{k+1})\\
&\le
t^{-1}_{k+1}(f+g)(x_*)+(1-t^{-1}_{k+1})(f+g)(x^{k})-(f+g)(x^{k+1}).
\end{align*}
After rearrangement, we obtain
\begin{align*}\nonumber
&(1-t^{-1}_{k+1})\left[(f+g)(x^{k})-(f+g)(x_*)\right]-\left[(f+g)(x^{k+1})-(f+g)(x_*)\right]\\
&\ge
\frac{1}{2\alpha_kt^{2}_{k+1}}\left(\left\|t_{k+1}x^{k+1}-(x_*+(t_{k+1}-1)x^k)\right\|^2-\left\|t_{k+1}y^k-(x_*+(t_{k+1}-1)x^{k})\right\|^2\right).
\end{align*}
By multiplying by $t^2_{k+1}$ to the above inequality and  using \eqref{yk} and Lemma
\ref{aux-1}(ii), we have
\begin{align*}\label{antes-s-3}\nonumber
&\frac{1}{2\alpha_k}\left(\|t_{k+1}x^{k+1}-(x_*+(t_{k+1}-1)x^k)\|^2-\|t_{k+1}y^k-(x_*+(t_{k+1}-1)x^{k})\|^2\right)\\
&=\frac{1}{2\alpha_k}\left(\|t_{k+1}x^{k+1}-(t_{k+1}-1)x^k-x_*\|^2-\|t_{k}x^k-(t_{k}-1)x^{k-1}-x_*\|^2\right)\\
&\le(t^2_{k+1}-t_{k+1})\left[(f+g)(x^{k})-(f+g)(x_*)\right]-t^2_{k+1}\left[(f+g)(x^{k+1})-(f+g)(x_*)\right]\\
&=t^2_{k}\left[(f+g)(x^{k})-(f+g)(x_*)\right]-t^2_{k+1}\left[(f+g)(x^{k+1})-(f+g)(x_*)\right].
\end{align*}
It follows that
\begin{align}\nonumber
&\|t_{k}x^k-(t_{k}-1)x^{k-1}-x_*\|^2-\|t_{k+1}x^{k+1}-(t_{k+1}-1)x^k-x_*\|^2\\
\nonumber&\ge 2\alpha_k
\left(t^2_{k+1}\left[(f+g)(x^{k+1})-(f+g)(x_*)\right]-
t^2_{k}\left[(f+g)(x^{k})-(f+g)(x_*)\right]\right)\\ \nonumber&\ge
2\alpha_{k+1}
t^2_{k+1}\left[(f+g)(x^{k+1})-(f+g)(x_*)\right]-
2\alpha_k t^2_{k}\left[(f+g)(x^{k})-(f+g)(x_*)\right],
\end{align}
where the last inequality follows from the facts that $\al_k\ge \al_{k+1}={\bf Linesearch~1}(\tilde y^k,\al_k,\theta,\delta)$ and $(f+g)(x^{k+1})-(f+g)(x_*)\ge 0$.
Reordering the above inequality and applying it inductively yield
\begin{align*}\nonumber
&2\alpha_{k+1}
t^2_{k+1}\left[(f+g)(x^{k+1})-(f+g)(x_*)\right]\\ \nonumber &
\le\|t_{k+1}x^{k+1}-(t_{k+1}-1)x^k-x_*\|^2+2\alpha_{k+1}
t^2_{k+1}\left[(f+g)(x^{k+1})-(f+g)(x_*)\right]\\ \nonumber&\le
\|t_{k}x^k-(t_{k}-1)x^{k-1}-x_*\|^2+2\alpha_{k}
t^2_{k}\left[(f+g)(x^{k})-(f+g)(x_*)\right]\\ \nonumber &\le \ldots\le \|t_{0}x^0-(t_{0}-1)x^{-1}-x_*\|^2+2\alpha_{0}
t^2_{0}\left[(f+g)(x^{0})-(f+g)(x_*)\right]\\ \nonumber&=
\|x^0-x_*\|^2+2\alpha_{0}\left[(f+g)(x^{0})-(f+g)(x_*)\right],
\end{align*}
which readily imply
$
2\alpha_{k}
t^2_{k}[(f+g)(x^{k})-(f+g)(x_*)]\le\|x^0-x_*\|^2+2\sigma\left[(f+g)(x^{0})-(f+g)(x_*)\right].
$
Using this inequality together with Lemma \ref{aux-1}(i) gives us that
\begin{align*}(f+g)(x^{k})-\min_{x\in \HH}\,(f+g)(x)\le & \frac{1}{2\alpha_{k} t^2_k}
\left(\|x^0-x_*\|^2+2\sigma\left[(f+g)(x^{0})-\min_{x\in \HH}\,(f+g)(x)\right]\right)\\
\le & \frac{\dsty\frac{2}{\alpha}\cdot\left(
\|x^0-x_*\|^2+2\sigma\left[(f+g)(x^{0})-\dsty\min_{x\in
\HH}(f+g)(x)\right]\right)}{(k+1)^2}
\end{align*} for all $x_*\in
S_*$ and thus verifies \eqref{nn}. The proof of the theorem is
complete.
\end{proof}
This theorem shows that the expected error of the iterates
generated by {\bf Method~\ref{A3}} after $k$ iterations is
$\mathcal{O}(k^{-2})$ when the stepsizes are bounded below by a positive constant. Similarly to Proposition \ref{lema-alpha}, we prove in the next result that  such a requirement is satisfied under global Lipschitz assumption on the gradient of $f$. {The complexity $o(k^{-2})$ for the accelerated scheme similarly to \eqref{tk+1}--(45) has been obtained recently in \cite{chambolle, attouch}  under the global Lipschitz assumption. It would be interesting to combine their techniques with ours to derive similar complexity under the weaker assumption of local Lipschitz continuity as in Proposition~\ref{lema-alpha}(ii).}  
\begin{proposition}\label{ac-lema-alpha}
Let $(\alpha_k)_{k\in \NN}$ be the sequence generated by {\bf
Linesearch~\ref{boundary}} on {\bf Method \ref{A3}}. If the gradient
of $f$ is globally Lipschitz continuous on $\dom g$  then there exists some $\al>0$ such that
$\alpha_k\ge \al$ for
all $k\in \NN$.
\end{proposition}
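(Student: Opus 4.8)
The plan is to mimic the argument for Proposition~\ref{lema-alpha}(i), the only differences being that at iteration $k$ the linesearch is evaluated at the projected extrapolation point $\tilde{y}^k$ rather than at $x^k$, and that it is initialized from $\alpha_{k-1}$ instead of a fixed $\sigma$. The global Lipschitz constant $L>0$ of $\nabla f$ will again convert the ``failed trial'' inequality produced by the while-loop into a positive lower bound on each step.

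First I would record the two elementary facts already noted after {\bf Method~\ref{A3}}: the sequence $(\alpha_k)_{k\in\NN}$ is positive (by Lemma~\ref{boundary-well}, since $\tilde{y}^k\in\dom g$) and nonincreasing, with $\alpha_{-1}=\sigma$, because the call ${\bf Linesearch~\ref{boundary}}(\tilde{y}^k,\alpha_{k-1},\theta,\delta)$ starts from $\alpha_{k-1}$ and only multiplies by $\theta$. Hence it suffices to control the reductions. Fix $k$ and suppose $\alpha_k<\alpha_{k-1}$, so that at least one reduction occurred; then $\hat{\alpha}_k:=\alpha_k/\theta$ is a value that was tested and failed the while-condition. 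Writing $\hat{x}^k:=J(\tilde{y}^k,\hat{\alpha}_k)$, the failure means
$$
\hat{\alpha}_k\,\bigl\|\nabla f(\hat{x}^k)-\nabla f(\tilde{y}^k)\bigr\|>\delta\,\bigl\|\hat{x}^k-\tilde{y}^k\bigr\|.
$$
As in Proposition~\ref{lema-alpha}(i), this forces $\hat{x}^k\neq\tilde{y}^k$ (otherwise both sides would vanish), so I may divide by $\|\hat{x}^k-\tilde{y}^k\|>0$.

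Next I would invoke global Lipschitz continuity, $\|\nabla f(\hat{x}^k)-\nabla f(\tilde{y}^k)\|\le L\|\hat{x}^k-\tilde{y}^k\|$, which holds since $\hat{x}^k,\tilde{y}^k\in\dom g$ (as recorded after {\bf Method~\ref{A3}}, together with $J$ mapping into $\dom g$). Combining it with the displayed strict inequality and cancelling $\|\hat{x}^k-\tilde{y}^k\|$ yields $\hat{\alpha}_kL>\delta$, i.e.
$$
\alpha_k=\theta\hat{\alpha}_k>\frac{\theta\delta}{L}.
$$
Thus every strict reduction leaves the step above $\theta\delta/L$, while a non-reduction leaves it unchanged. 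A one-line induction on $k$ starting from $\alpha_{-1}=\sigma$ then gives $\alpha_k\ge\min\{\sigma,\theta\delta/L\}$ for all $k\in\NN$: if the bound holds for $\alpha_{k-1}$, then either $\alpha_k=\alpha_{k-1}$ preserves it, or $\alpha_k>\theta\delta/L\ge\min\{\sigma,\theta\delta/L\}$. Setting $\alpha:=\min\{\sigma,\theta\delta/L\}>0$ completes the proof.

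I do not expect a genuine obstacle here: the estimate is essentially a verbatim transcription of Proposition~\ref{lema-alpha}(i) with the points $\tilde{y}^k$ in place of $x^k$. The only points requiring a small amount of care are (a) confirming that $\hat{\alpha}_k=\alpha_k/\theta$ is indeed a genuinely tested trial value---guaranteed precisely because $\alpha_k<\alpha_{k-1}$---and (b) the inductive bookkeeping that turns the per-reduction bound $\theta\delta/L$, together with the nonincreasing and $\sigma$-initialized structure of $(\alpha_k)_{k\in\NN}$, into the uniform lower bound $\min\{\sigma,\theta\delta/L\}$.
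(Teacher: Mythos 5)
Your proposal is correct and rests on exactly the same key estimate as the paper's proof: when a reduction occurs, the failed trial value $\hat{\alpha}_k=\alpha_k/\theta$ satisfies the while-condition, which combined with the global Lipschitz bound $\|\nabla f(\hat{x}^k)-\nabla f(\tilde{y}^k)\|\le L\|\hat{x}^k-\tilde{y}^k\|$ forces $\alpha_k\ge\theta\delta/L$. The only difference is organizational: the paper argues by contradiction, taking the limit $\al=\lim_k\al_k$ of the nonincreasing sequence and showing that $\al<\delta\theta/L$ would make the sequence eventually constant at a positive value, whereas your direct induction from $\alpha_{-1}=\sigma$ is cleaner and even yields the explicit uniform bound $\alpha_k\ge\min\{\sigma,\theta\delta/L\}$, matching Proposition~\ref{lema-alpha}(i).
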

\begin{proof} Suppose that $\nabla f$ is globally Lipschitz continuous on $\dom g$ with constant $L>0$. Since $\al_k$ is nonnegative and decreasing,  $\lim_{k\to\infty}\al_k=\al$ exists.  If $\al<\frac{\delta\theta}{L}$, we may find   $K\in \NN$ such that $\al_k<\frac{\delta\theta}{L}$ for all $k>K$.
Define further $\hat{\alpha}_k:=\dsty \frac{\alpha_k}{\theta}>0$, and $
\hat{y}^k:=J(\tilde{y}^k,\hat{\alpha}_k)=\prox_{\hat{\alpha}_k
g}(\tilde{y}^k-\hat{\alpha}_k \nabla f(\tilde{y}^k))\in \dom g$. If
$\alpha_k<\al_{k-1}$ for $k>K$, it follows from  the definition of {\bf
Linesearch \ref{boundary}}  that
\begin{equation}\label{ac-eq-no-a}
\hat{\alpha}_k\left\|\nabla f\big(\hat{y}^k\big)-\nabla
f(\tilde{y}^k)\right\|>\delta\left\|\hat{y}^k-\tilde{y}^k\right\|.
\end{equation}
 Due to
the fact $\nabla f$ is Lipschitz continuous on $\dom g$ with constant $L$, we get from \eqref{ac-eq-no-a} that $\hat\alpha_k
L\|\tilde{y}^k-\hat{y}^{k}\|> \delta
\|\tilde{y}^k-\hat{y}^{k}\|. $ Thus $ \alpha_k\ge
\frac{\delta\theta}{L} $, which is a contradiction. Hence $\al_k\ge \al_{k-1}$, \emph{i.e.},  $\al_k= \al_{k-1}$ for all $k>K$. This tells us that $\al_K=\al>0$ whenever $\al<\frac{\delta\theta}{L}$. Thus we always have  $\al>0$ and complete the proof.
\end{proof}
\noindent Let us complete the section with a direct consequence of
the above proposition and Theorem~\ref{l-rate-2}.
\begin{corollary}\label{with-Lip-2}
Let $(x^k)_{k\in \NN}$ be the sequence generated by {\bf Method
\ref{A3}}. Suppose that $S_*\neq \emptyset$ and the gradient of $f$ is Lipschitz continuous on $\dom g$. Then we have
\begin{equation*}\label{rate1-2}
(f+g)(x^k)-\min_{x\in
\HH}(f+g)(x)=\mathcal{O}{((k+1)^{-2})}.
\end{equation*}
\end{corollary}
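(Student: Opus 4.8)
The plan is to combine Proposition~\ref{ac-lema-alpha} with Theorem~\ref{l-rate-2}, since the statement is advertised as a direct consequence of these two results. First I would verify that the hypotheses of both are met. The global Lipschitz continuity of $\nabla f$ on $\dom g$ is precisely the assumption of Proposition~\ref{ac-lema-alpha}, which then supplies a constant $\al>0$ with $\alpha_k\ge \al$ for all $k\in \NN$. Together with the standing hypothesis $S_*\neq\emptyset$, this furnishes exactly the pair of assumptions required by Theorem~\ref{l-rate-2}.

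Next I would apply Theorem~\ref{l-rate-2} directly. Picking any $x_*\in S_*$, it yields, for every $k\in\NN$,
\[
(f+g)(x^k)-\min_{x\in\HH}(f+g)(x)\le \frac{\frac{2}{\al}\left(\|x^0-x_*\|^2+2\sigma\left[(f+g)(x^0)-\min_{x\in\HH}(f+g)(x)\right]\right)}{(k+1)^2}.
\]
The final step is to observe that
\[
C:=\frac{2}{\al}\left(\|x^0-x_*\|^2+2\sigma\left[(f+g)(x^0)-\min_{x\in\HH}(f+g)(x)\right]\right)
\]
is a fixed finite constant independent of $k$: here $x_*$ is a chosen element of $S_*$, the point $x^0$ is the fixed initialization, $\sigma$ and $\al$ are fixed positive numbers, and the optimal value $\min_{x\in\HH}(f+g)(x)$ is finite because $S_*\neq\emptyset$. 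Hence the right-hand side equals $C(k+1)^{-2}=\mathcal{O}((k+1)^{-2})$, which is the desired conclusion.

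Since this is a corollary assembled from two already-established results, there is no genuine analytical obstacle; the only care needed is bookkeeping on the hypotheses. The one point worth double-checking is that global Lipschitz continuity of $\nabla f$ on $\dom g$ does imply Assumption~\ref{a2} (as noted in the text immediately after the statement of \ref{a2}), so that {\bf Method~\ref{A3}} is well-defined and Theorem~\ref{l-rate-2} genuinely applies; this verification is immediate and requires no separate argument.
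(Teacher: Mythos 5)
Your proof is correct and is exactly the paper's argument: the paper states this corollary as a direct consequence of Proposition~\ref{ac-lema-alpha} (global Lipschitz continuity of $\nabla f$ on $\dom g$ yields $\alpha_k\ge\al>0$) combined with Theorem~\ref{l-rate-2}, which is precisely the chain you assembled. Your added check that the Lipschitz hypothesis also guarantees Assumption \ref{a2}, so that {\bf Method \ref{A3}} is well-defined, is a sensible piece of bookkeeping that the paper leaves implicit.
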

%%%%%%%%%%%%%%%%%%%%%%%%%%%%%%%% Algorithm 2 %%%%%%%%%%%%%%%%%%%%%%%%%%%%%%%%%%%%%%%
%%%%%%%%%%%%%%%%%%%%%%%%%%%%%%%%%%%%%%%%%%%%%%%%%%%%%%%%%%%%%%%%%%%%%%%%%%%%%%%%%%%%
\section{The forward-backward method with Linesearch \ref{boundary2}}\label{Sec-new-4}
{\bf Method~\ref{A1}} requires to evaluate the resolvent of
$\partial g$ inside {\bf Linesearch~\ref{boundary}} at each step of
the iteration. When the proximal step is not easy to  compute,  {\bf
Method~\ref{A1}} may be inefficient. To overcome this drawback, we
propose here a modification of the forward-backward method
by using {\bf Linesearch~\ref{boundary2}}, which involves only one
computation of the resolvent of $\partial g$ for all steps of this
linesearch. We also prove that the sequence generated by this method
is weakly convergent to a solution of problem \eqref{prob}.
\begin{center}\fbox{\begin{minipage}[b]{\textwidth}
\begin{method}\label{A2}
\item [    ] {\bf Initialization Step.} Take $x^0\in \dom g$ and $\theta\in(0,1)$.

\item [    ] \noindent {\bf Iterative Step.} Set
\begin{eqnarray}
J_k&=&\prox_{ g}(x^k-\nabla f(x^k))\label{tag4}\\
x^{k+1}&=&x^k -\beta_k (x^k-J_k)\label{paso3}
\end{eqnarray}
with $\beta_k:=$ {\bf Linesearch~\ref{boundary2}}$(x^k,\theta)$.
\item [    ] \noindent  {\bf Stop Criteria.} If $x^{k+1}=x^k$, then stop.
\end{method}\end{minipage}}\end{center}
Thanks to  Lemma~\ref{boundary-well2} and the convexity of $g$, we
note that $x^{k}\in \dom g$ inductively. Moreover,  it follows  from
{\bf Linesearch~\ref{boundary2}} that
\begin{equation}\label{12*}
(f+g)(x^{k+1})\le (f+g)(x^k)-\beta_k\left[g(x^k)-g(J_k)\right]-\beta_k
\la\nabla f(x^k), x^k-J_k\ra+\frac{\beta_k}{2}\|x^k-J_k \|^2.
\end{equation}
Next we obtain some similar results for {\bf Method~\ref{A2}} to the
ones in Section~3 for {\bf Method~\ref{A1}}. The following
proposition is corresponding to Proposition~\ref{prop1}.
\begin{proposition} \label{lema-para-qF}
Let $x\in \dom g$. Then we have
$$\|x^{k+1}-x\|^2\le\|x^{k}-x\|^2
+2\left[(f+g)(x^k)-(f+g)(x^{k+1})\right]+2\beta_k\left[(f+g)(x)-(f+g)(x^k)\right],\quad
\forall\, k\in \NN.$$
\end{proposition}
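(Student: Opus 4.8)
The plan is to follow the scheme of Proposition~\ref{prop1}, adapted to the relaxed update \eqref{paso3}. Setting $d_k := x^k - J_k$, equation \eqref{paso3} reads $x^{k+1} = x^k - \beta_k d_k$, so the elementary identity for the squared norm gives
\[
\|x^{k+1}-x\|^2 - \|x^k-x\|^2 = \beta_k^2\|d_k\|^2 - 2\beta_k\la d_k, x^k-x\ra .
\]
The task is thus to bound the right-hand side above by $2[(f+g)(x^k)-(f+g)(x^{k+1})] + 2\beta_k[(f+g)(x)-(f+g)(x^k)]$.

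First I would extract a subgradient inequality. Applying \eqref{in-sub} with $z = x^k - \nabla f(x^k)$ and $\al = 1$ (recall $J_k = \prox_{g}(x^k-\nabla f(x^k))$ from \eqref{tag4}) yields $d_k - \nabla f(x^k) \in \partial g(J_k)$. Inserting this into \eqref{sub-inq} and combining with the gradient inequality $f(x)-f(x^k)\ge\la\nabla f(x^k),x-x^k\ra$ from convexity of $f$, then writing $x-J_k=(x-x^k)+d_k$ and cancelling the common $\la\nabla f(x^k),x-x^k\ra$ terms, I expect to reach
\[
(f+g)(x)-f(x^k)-g(J_k)\ge \la d_k,x-x^k\ra+\|d_k\|^2-\la\nabla f(x^k),d_k\ra .
\]

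Next I would invoke the linesearch output inequality \eqref{12*}, rewritten as a lower bound on the decrease:
\[
(f+g)(x^k)-(f+g)(x^{k+1})\ge \beta_k[g(x^k)-g(J_k)]+\beta_k\la\nabla f(x^k),d_k\ra-\tfrac{\beta_k}{2}\|d_k\|^2 .
\]
Multiplying the subgradient inequality by $2\beta_k$, multiplying the decrease inequality by $2$, and adding, while using the identity $(f+g)(x)-f(x^k)-g(J_k)=[(f+g)(x)-(f+g)(x^k)]+[g(x^k)-g(J_k)]$ to recognize the target combination on the left, I anticipate that the terms carrying $g(x^k)-g(J_k)$ and $\la\nabla f(x^k),d_k\ra$ cancel exactly, leaving
\[
2[(f+g)(x^k)-(f+g)(x^{k+1})]+2\beta_k[(f+g)(x)-(f+g)(x^k)]\ge \beta_k\|d_k\|^2-2\beta_k\la d_k,x^k-x\ra .
\]

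The final and genuinely essential step is to compare this with the expansion of the first paragraph: the only discrepancy is the coefficient $\beta_k^2$ versus $\beta_k$ in front of $\|d_k\|^2$. Since {\bf Linesearch~\ref{boundary2}} initializes $\beta=1$ and only multiplies by $\theta\in(0,1)$, one always has $\beta_k\in\{1,\theta,\theta^2,\ldots\}\subseteq(0,1]$, hence $\beta_k^2\le\beta_k$ and $\beta_k^2\|d_k\|^2\le\beta_k\|d_k\|^2$. This yields $\|x^{k+1}-x\|^2-\|x^k-x\|^2\le\beta_k\|d_k\|^2-2\beta_k\la d_k,x^k-x\ra$, which combined with the displayed inequality above gives the claim. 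The main obstacle is the careful bookkeeping that produces the exact cancellation of the $g$-difference and gradient terms; the role of the bound $\beta_k\le1$ is precisely what makes the relaxed (non-unit) step in \eqref{paso3} compatible with the Fej\'er-type estimate.
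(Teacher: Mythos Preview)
Your argument is correct and follows essentially the same route as the paper: both proofs use the subgradient inclusion $d_k-\nabla f(x^k)\in\partial g(J_k)$ from \eqref{in-sub}, convexity of $f$, the linesearch inequality \eqref{12*}, and finally the bound $\beta_k^2\le\beta_k$ to absorb the extra $\|d_k\|^2$ term. The only difference is organizational: the paper starts from $A_k:=\|x^{k+1}-x^k\|^2+\|x^k-x\|^2-\|x^{k+1}-x\|^2=2\beta_k\la d_k,x^k-x\ra$ and bounds $A_k/(2\beta_k)$ from below, whereas you expand $\|x^{k+1}-x\|^2-\|x^k-x\|^2$ directly and build the matching upper bound---the bookkeeping and cancellations are identical.
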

\begin{proof}
Fix any $x\in \dom g$ and set $ A_k:=\|x^{k+1}-x^k\|^2+\|x^{k}-x\|^2
-\|x^{k+1}-x\|^2=2\la x^{k}-x^{k+1},x^k-x\ra. $ Moreover, we get
from \eqref{paso3}  that
\begin{align*} \dfrac{A_k}{2\beta_k}&=\la x^k-J_k,x^k-x\ra
=\la \nabla f(x^k),x^k-x\ra +\la x^k-J_k-\nabla f(x^k),x^k-x\ra \\
&=\la \nabla f(x^k),x^k-x\ra +\la x^k-J_k-\nabla f(x^k),J_k-x\ra+\la
x^k-J_k-\nabla f(x^k), x^k-J_k\ra\\ &=\la \nabla f(x^k),x^k-x\ra
+\la x^k-J_k-\nabla f(x^k),J_k-x\ra - \la \nabla f(x^k), x^k-J_k\ra+
\|x^k-J_k\|^2.
\end{align*}
Observe from \eqref{tag4} that  $x^k-\nabla f(x^k)-J_k\in \partial g(J_k)$. By applying \eqref{sub-inq} and  \eqref{12*} to the above expression, we have
\begin{align*}
\dfrac{A_k}{2\beta_k}&\ge f(x^k)-f(x)+ g(J_k)-g(x) - \la \nabla
f(x^k), x^k-J_k\ra+ \|x^k-J_k\|^2\\ &\ge f(x^k)+
g(J_k)-(f+g)(x)+\frac{1}{\beta_k}\Big[(f+g)(x^{k+1})-(f+g)(x^k)\Big]+g(x^k)-g(J_k)+\frac{1}{2}\|x^k-J_k\|^2\\
&=
\left[(f+g)(x^k)-(f+g)(x)\right]+\frac{1}{\beta_k}\left[(f+g)(x^{k+1})-(f+g)(x^k)\right]+\frac{1}{2}\|x^k-J_k\|^2.
\end{align*}
It follows  that
\begin{align*}\|x^{k+1}-x\|^2\le&\|x^{k}-x\|^2+\|x^{k+1}-x^k\|^2-\beta_k\|x^k-J_k\|^2
+2\left[(f+g)(x^k)-(f+g)(x^{k+1})\right]\\&+2\beta_k\left[(f+g)(x)-(f+g)(x^k)\right].
\end{align*}
Since $x^{k+1}-x^k=\beta_k(J_k -x^k)$ by \eqref{paso3} and $\beta_k^2\le \beta_k$, we
conclude that
\begin{align*}\nonumber
\|x^{k+1}-x\|^2\le&\|x^{k}-x\|^2+(\beta_k^2-\beta_k)\|x^k-J_k\|^2
+2\left[(f+g)(x^k)-(f+g)(x^{k+1})\right]\\&+2\beta_k\left[(f+g)(x)-(f+g)(x^k)\right]\nonumber\\
\le&\|x^{k}-x\|^2
+2\left[(f+g)(x^k)-(f+g)(x^{k+1})\right]+2\beta_k\left[(f+g)(x)-(f+g)(x^k)\right]\end{align*}
as desired. The proof is complete. \end{proof}
\noindent It is worth
noting that using Proposition~\ref{lema-para-qF} with $x=x^k\in \dom
g$ gives us that
\begin{equation}\label{fkdec}
(f+g)(x^k)-(f+g)(x^{k+1})\ge \frac{1}{2} \|x^{k+1}-x^k\|^2\ge 0,
\end{equation}
which shows that {\bf Method~\ref{A2}} is also a descent method. 

Next we
establish the main result of this section whose statement is similar
to Theorem~\ref{new-cov}.
%%%%%%%%%%%%%%%%%%%%%%%%%%%%%%%%%%%%%%%%%%%%%%%%%%%%%%%%%%%%%%%%%%%
\begin{theorem}\label{ptos-de-acum2}
Let $(x^k)_{k\in\NN}$ be the sequence generated by {\rm {\bf
Method~\ref{A2}}}. The  following statements hold:

\item {\bf (i)} If $S_*\neq\emptyset$ then $(x^k)_{k\in \NN}$ is quasi-Fej\'er convergent to $S_*$ and  weakly converges to a point in
$S_*$. 
%Moreover, \begin{equation}\label{min*2}\lim_{k\to\infty}
%	(f+g)(x^k)=\min_{x\in \HH}\,(f+g)(x).\end{equation}
\item {\bf (ii)} If $S_*=\emptyset$ then we have
\begin{equation}\label{inf*}
\lim_{k\to\infty}\|x^k\|=+\infty \quad \mbox{and}
\quad\lim_{k\to\infty} (f+g)(x^k)=\inf_{x\in \HH}(f+g)(x).
\end{equation}
\end{theorem}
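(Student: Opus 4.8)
The plan is to follow the architecture of the proof of Theorem~\ref{new-cov}, with Proposition~\ref{lema-para-qF} and the descent estimate \eqref{fkdec} playing the roles of Proposition~\ref{prop1}. For part (i), I fix $x_*\in S_*$ and apply Proposition~\ref{lema-para-qF} with $x=x_*$. Since $x_*$ minimizes $f+g$, the term $2\beta_k[(f+g)(x_*)-(f+g)(x^k)]$ is $\le 0$, leaving $\|x^{k+1}-x_*\|^2\le\|x^k-x_*\|^2+\epsilon_k$ with $\epsilon_k:=2[(f+g)(x^k)-(f+g)(x^{k+1})]\ge0$ by \eqref{fkdec}. Telescoping together with $(f+g)(x^k)\ge(f+g)(x_*)$ gives $\sum_{k}\epsilon_k\le 2[(f+g)(x^0)-(f+g)(x_*)]<+\infty$, so $(x^k)_{k\in\NN}$ is quasi-Fej\'er convergent to $S_*$ and hence bounded by Fact~\ref{lema-Fejer}(i); moreover \eqref{fkdec} yields $\sum_k\|x^{k+1}-x^k\|^2<+\infty$, whence $\|x^{k+1}-x^k\|=\beta_k\|x^k-J_k\|\to0$.

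The core of the argument is to show that every weak accumulation point $\bar x$ of $(x^k)_{k\in\NN}$ lies in $S_*$. Let $x^{n_k}\rightharpoonup\bar x$. Since $(x^{n_k})$ is bounded and $\prox_g$ is nonexpansive, Assumption~\ref{a2} keeps $(J_{n_k})_{k\in\NN}$ bounded, and \eqref{in-sub}--\eqref{tag4} give $(x^{n_k}-J_{n_k})-\nabla f(x^{n_k})+\nabla f(J_{n_k})\in\partial(f+g)(J_{n_k})$. If I can prove $\|x^{n_k}-J_{n_k}\|\to0$ along a further subsequence, then $J_{n_k}\rightharpoonup\bar x$, uniform continuity of $\nabla f$ on bounded sets forces the left-hand side to $0$ strongly, and the demiclosedness in Fact~\ref{teo-p1} yields $0\in\partial(f+g)(\bar x)$, i.e. $\bar x\in S_*$. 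When $\limsup_k\beta_{n_k}>0$ this is immediate: passing to a subsequence with $\beta_{n_k}\ge\beta>0$ gives $\|x^{n_k}-J_{n_k}\|\le\beta^{-1}\|x^{n_k+1}-x^{n_k}\|\to0$.

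The main obstacle is the remaining case $\beta_{n_k}\to0$, where $\beta_{n_k}\|x^{n_k}-J_{n_k}\|\to0$ cannot be divided through. Here I would mimic Lemma~\ref{boundary-well2}: for $k$ large, $\hat\beta_{n_k}:=\beta_{n_k}/\theta\in(0,1)$ fails the acceptance test of {\bf Linesearch~\ref{boundary2}}. Writing $d_k:=x^k-J_k$ and using convexity of $g$ to bound $[g(x^{n_k}-\hat\beta_{n_k}d_{n_k})-g(x^{n_k})]/\hat\beta_{n_k}\le g(J_{n_k})-g(x^{n_k})$, the explicit $g$-terms cancel and the failed test reduces to $[f(x^{n_k}-\hat\beta_{n_k}d_{n_k})-f(x^{n_k})]/\hat\beta_{n_k}+\la\nabla f(x^{n_k}),d_{n_k}\ra>\tfrac12\|d_{n_k}\|^2$. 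The mean value theorem applied to $t\mapsto f(x^{n_k}-td_{n_k})$ rewrites the quotient as $-\la\nabla f(x^{n_k}-\xi_{n_k}d_{n_k}),d_{n_k}\ra$ for some $\xi_{n_k}\in(0,\hat\beta_{n_k})$, giving $\|\nabla f(x^{n_k})-\nabla f(x^{n_k}-\xi_{n_k}d_{n_k})\|>\tfrac12\|d_{n_k}\|$. Since $(x^{n_k})$ and $(d_{n_k})$ are bounded and $\xi_{n_k}\|d_{n_k}\|\le\hat\beta_{n_k}\sup_k\|d_{n_k}\|\to0$, uniform continuity of $\nabla f$ (Assumption~\ref{a2}) forces $\|d_{n_k}\|\to0$, as needed. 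With every weak accumulation point in $S_*$, Fact~\ref{lema-Fejer}(ii) delivers weak convergence of $(x^k)_{k\in\NN}$ to a point of $S_*$, proving (i).

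For part (ii), the preceding paragraphs show every weak accumulation point of $(x^k)_{k\in\NN}$ belongs to $S_*$; as $S_*=\emptyset$ and bounded sequences in $\HH$ are weakly sequentially compact, no subsequence can be bounded, so $\|x^k\|\to+\infty$. Since $(f+g)(x^k)$ is nonincreasing by \eqref{fkdec}, the limit $s:=\lim_k(f+g)(x^k)\ge\inf_{\HH}(f+g)$ exists. If $s>\inf_{\HH}(f+g)$, then $S_{\rm lev}(x^0):=\{x\in\dom g:(f+g)(x)\le(f+g)(x^k)\ \forall k\in\NN\}$ is nonempty, and applying Proposition~\ref{lema-para-qF} at any $x\in S_{\rm lev}(x^0)$, where again the last term is $\le0$, shows $(x^k)_{k\in\NN}$ is quasi-Fej\'er convergent to $S_{\rm lev}(x^0)$ and hence bounded by Fact~\ref{lema-Fejer}(i) --- contradicting $\|x^k\|\to+\infty$. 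Therefore $s=\inf_{\HH}(f+g)$, which completes the proof.
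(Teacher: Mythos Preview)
Your proof is correct and follows the same overall architecture as the paper: establish quasi-Fej\'er convergence to $S_*$ via Proposition~\ref{lema-para-qF} and \eqref{fkdec}, show every weak accumulation point is optimal by a case split on the stepsizes, and conclude with Fact~\ref{lema-Fejer}(ii); part~(ii) is handled exactly as in Theorem~\ref{new-cov}(ii).

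The one noteworthy difference is how you handle \textbf{Case~1} ($\beta_{n_k}\ge\beta>0$). The paper sums Proposition~\ref{lema-para-qF} to obtain $\sum_k\beta_k[(f+g)(x^k)-(f+g)(x_*)]<\infty$, forces $(f+g)(x^{n_k})\to(f+g)(x_*)$, and then invokes weak lower semicontinuity of $f+g$ to conclude $\bar x\in S_*$. You instead use $\|x^{n_k}-J_{n_k}\|\le\beta^{-1}\|x^{n_k+1}-x^{n_k}\|\to0$ and close via the demiclosedness of $\partial(f+g)$, exactly as in Case~2. This buys you a unified argument: both cases reduce to ``$\|x^{n_k}-J_{n_k}\|\to0$ along a subsequence, then Fact~\ref{teo-p1}''. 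In Case~2 your use of the mean value theorem in place of the paper's gradient inequality $f(x^k)-f(\hat y^k)\ge\la\nabla f(\hat y^k),x^k-\hat y^k\ra$ is a cosmetic variation: both yield $\tfrac12\|x^k-J_k\|\le\|\nabla f(x^k)-\nabla f(z_k)\|$ for some $z_k$ on the segment $[x^k,J_k]$ with $\|x^k-z_k\|\to0$, after which Assumption~\ref{a2} finishes the job. One small point worth making explicit in part~(ii): your Case~1 argument relies on $\|x^{k+1}-x^k\|\to0$, which in part~(i) you obtained from quasi-Fej\'er convergence to $S_*$. When $S_*=\emptyset$ but a subsequence $x^{n_k}\rightharpoonup\bar x$ exists, weak l.s.c.\ gives $(f+g)(\bar x)\le\lim_k(f+g)(x^k)$, so the decreasing sequence $(f+g)(x^k)$ is bounded below and \eqref{fkdec} again yields $\|x^{k+1}-x^k\|\to0$; the rest goes through unchanged. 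The paper omits this detail as well.
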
\begin{proof}
To justify {\bf (i)}, suppose that $S_*\neq\emptyset$. By employing
Proposition~\ref{lema-para-qF}  at $x=x_*\in S_*\subseteq \dom g$, we have
\begin{equation}\label{fjj}
\|x^{k+1}-x_*\|^2\le\|x^{k}-x_*\|^2
+2\left[(f+g)(x^k)-(f+g)(x^{k+1})\right]\quad \mbox{for all}\quad k\in \NN.
\end{equation}
It follows from \eqref{fkdec} that
$\epsilon_k:=2\left[(f+g)(x^k)-(f+g)(x^{k+1})\right]\ge 0$.
Moreover, observe that
\begin{align*}
\sum_{k=0}^\infty\epsilon_k=&
2\sum_{k=0}^\infty\Big[(f+g)(x^k)-(f+g)(x^{k+1})\Big] \le
2\Big[(f+g)(x^0)-\lim_{k\to\infty} (f+g)(x^{k+1})\Big]\\ \leq&
2\Big[(f+g)(x^0)-(f+g)(x_*)\Big]< +\infty.
\end{align*}
 This
together with \eqref{fjj} tells us that the sequence $(x^k)_{k\in
\NN}$ is quasi-Fej\'er convergent to $S_*$ via
Definition~\ref{def-fejer}. By Fact \ref{lema-Fejer}(i), this sequence is bounded and hence it has weak accumulation points.
Let $\bar{x}$ be a weak accumulation point of
$(x^k)_{k\in\NN}$. Hence there exists a subsequence $(x^{n_k})_{k\in
\NN}$ of $(x^k)_{k\in\NN}$ converging weakly to $\bar{x}$.  Now we
distinguish our analysis into two cases.

\medskip

\noindent {\bf Case 1.} The sequence $\left(\beta_{n_k}\right)_{k\in
\NN}$ does not converge to $0$, \emph{i.e.}, there exist some
$\beta>0$ and a subsequence of $\left(\beta_{n_k}\right)_{k\in \NN}$
(without relabelling) such that
\begin{equation}\label{bk-no-0}
\beta_{n_k}\geq\beta,\quad \forall\, k\in \NN.
\end{equation}By using Proposition~\ref{lema-para-qF} with $x=x_*\in S_*$, we get
\begin{align}\nonumber
\beta_k
\left[(f+g)(x^k)-(f+g)(x_*)\right]\le&\frac{1}{2}(\|x^{k}-x_*\|^2-\|x^{k+1}-x_*\|^2)
+(f+g)(x^k)-(f+g)(x^{k+1}).
\end{align}
 Summing from $k=0$ to $m$ in the above
inequality implies
\begin{align*}
\sum_{k=0}^m\beta_k
\left[(f+g)(x^k)-(f+g)(x_*)\right]\le&\frac{1}{2}(
\|x^0-x_*\|^2-\|x^{m+1}-x_*\|^2)+(f+g)(x^0)-(f+g)(x^{m+1})\\
\le&\frac{1}{2}\|x^0-x_*\|^2+(f+g)(x^0)-(f+g)(x_*).
\end{align*}
By taking $m\to \infty$ and using the fact that $(f+g)(x^k)\ge (f+g)(x_*)$, we obtain that
$$
\sum_{k=0}^{\infty}\beta_{n_k}\left[(f+g)(x^{n_k})-(f+g)(x_*)\right]\le \sum_{k=0}^{\infty}\beta_k\left[(f+g)(x^k)-(f+g)(x_*)\right]<+\infty,
$$
which together with \eqref{bk-no-0} establishes  that
$
\dsty\lim_{k\rightarrow\infty}\,(f+g)(x^{n_k})=(f+g)(x_*).
$ Since $f+g$ is  lower semicontinuous on $\dom g$, it is also weakly \emph{l.s.c.} due to the convexity of $f+g$. It follows from
the last equality that
\[ (f+g)(x_*)\le (f+g)(\bar{x})\le
\liminf_{k\rightarrow\infty}(f+g)(x^{n_k})=\lim_{k\rightarrow\infty}
(f+g)(x^{n_k})=(f+g)(x_*),
\]
 which yields $(f+g)(\bar{x})=(f+g)(x_*)$
and thus $\bar{x}\in S_*$.

\medskip

\noindent{\bf Case 2.} $\dsty\lim_{k\rightarrow\infty}\beta_{k}=0$.
Define $\dsty \hat{\beta}_k:=\frac{\beta_k}{\theta}>0$ and
\begin{equation}\label{2*paso}
\hat{y}^k:=x^k-\hat{\beta}_k
(x^k-J_k)=(1-\hat{\beta}_k)x^k+\hat{\beta}_kJ_k.
\end{equation}
It follows from  the definition of {\bf Linesearch 2}
 that
\begin{equation}\label{no-armijo}
(f+g)(\hat{y}^k)>(f+g)(x^k)-\hat{\beta}_k[g(x^k)-g(J_k)]-\hat{\beta}_k\la\nabla
f(x^k), x^k-J_k\ra +\frac{\hat{\beta}_k}{2}\|x^k-J_k\|^2.
\end{equation}
This together with \eqref{sub-inq} and \eqref{2*paso} gives us that
\begin{align*}
0>&\,-\hat{\beta}_k\la\nabla f(x^k),
x^k-J_k\ra+(f+g)(x^k)-(f+g)(\hat{y}^k)-\hat{\beta}_k[g(x^k)-g(J_k)]+\frac{\hat{\beta}_k}{2}\|x^k-J_k\|^2\\
=&\,-\hat{\beta}_k\la\nabla f(x^k),
x^k-J_k\ra+f(x^k)-f(\hat{y}^k)+g(x^k)-g(\hat{y}^k)-\hat{\beta}_k[g(x^k)-g(J_k)]+\frac{\hat{\beta}_k}{2}\|x^k-J_k\|^2\\
\geq&-\hat{\beta}_k\la\nabla
f(x^k), x^k-J_k\ra+\la\nabla f(\hat{y}^k),
x^k-\hat{y}^k\ra+\frac{\hat{\beta}_k}{2}\|x^k-J_k\|^2\\&+g(x^k)-(1-\hat{\beta}_k)g(x^k)-\hat{\beta}_kg(J_k)-\hat{\beta}_k[g(x^k)-g(J_k)]\\
=&~\hat{\beta}_k\dsty\la \nabla f(\dsty \hat{y}^k)-\nabla
f(x^k),x^{k}-J_k\ra+\frac{\hat{\beta}_k}{2}\|x^k-J_k\|^2.
\end{align*}
We obtain  that
$$
\disp\frac{\hat\beta_k}{2}\|x^k-J_k\|^2 <\dsty~ \hat\beta_k\| \nabla
f(\hat{y}^k)-\nabla f(x^k)\| \cdot \| x^k-J_k\|,
$$
which yields
\begin{equation}\label{bb}
\frac{1}{2}\| x^k-J_k\|\le \| \nabla f(\hat{y}^k)-\nabla f(x^k)\|.
\end{equation}
Since $\prox_g(\cdot)$ is nonexpansive, we
get from \eqref{tag4} that
$
\|J_k-J_0\|\le \|x^k-x^0\|+\|\nabla f(x^k)-\nabla f(x^0)\|.
$
Due to Assumption {\bf A2} and the boundedness of $(x^k)_{k\in
\NN}$, the latter tells us that $(J_k)_{k\in \NN}$ is also bounded. This together with
\eqref{2*paso} and the fact $\beta_k\to 0$ implies  that $ \|\hat y^k -x^k\|\to0$ as $k\to \infty$. Since $\nabla
f$ is uniformly continuous on bounded sets, we get $\| \nabla
f(\hat{y}^k)-\nabla f(x^k)\| \to0$ as $k\to\infty$ and derive from
\eqref{bb} that
\begin{equation}\label{limite1_DI}
\dsty\lim_{k\rightarrow\infty}\,\| x^{k}-J_k\| = 0,
\end{equation}
Since $\nabla f$ is
uniformly continuous on bounded sets, \eqref{limite1_DI} implies
\begin{equation}\label{grad-to-0}
\dsty\lim_{k\rightarrow\infty}\| \nabla f(x^{k})-\nabla f(J_k)\| =0.
\end{equation}
Using \eqref{in-sub} with $z=x^k-\nabla f(x^k)$ gives us that
\begin{equation*}\label{inTk}
x^k-J_k + \nabla f(J_k)-\nabla f(x^k)\in \nabla f(J_k) +
\partial g(J_k)\subseteq\partial (f+g)(J_k).
\end{equation*}
By passing to the limit over the subsequence $(n_k)_{k\in \NN}$ in
the above inclusion, we get from  Fact~\ref{teo-p1}, \eqref{limite1_DI}, and \eqref{grad-to-0} that $0\in \partial (f+g)(\bar{x})$,
which implies $\bar{x}\in S_*$.

In all possible cases above,  any weak accumulation point of
$(x^k)_{k\in \NN}$ belongs to $S_*$.
Fact~\ref{lema-Fejer}(ii) tells us that $(x^k)_{k\in \NN}$
converges weakly to an optimal solution in $S_*$.
Thus this completes the proof of {\bf (i)}. Moreover, the proof of part {\bf (ii)} is quite
similar to the arguments used to prove Theorem~\ref{new-cov}(ii). We
omit the detail and complete the proof.
\end{proof}
\noindent From the view of \eqref{inf*} and also our Theorem~\ref{new-cov}, it is natural to
question that whether
\begin{equation}\label{min}
\lim_{k\to\infty} (f+g)(x^k)=\min_{x\in \HH}\,(f+g)(x)
\end{equation}
in the case $S_*\neq \emptyset$. We do not know the answer in
general, but when either $f+g$ is continuous on the $\dom g$   in finite dimensions or the sequence $(\beta_k)_{k\in \NN}$ is bounded
below by a positive constant, the equality \eqref{min} is true with
some further complexity discussed in the next subsection.
%%%%%%%%%%%%%%%%%%%%%%%%%%%%%%%%%%%%%%%%%%%%%%%%%%%%%%%%%%%%%%%%%%%
%%%%%%%%%%%%%%%%%%%%%%%%%%%%%%%%%%%%%%%%%%%%%%%%%%%%%%%%%%%%%%%%%%%

\subsection{Complexity analysis of Method \ref{A2}}
In this subsection we establish the complexity of   {\bf
Method~\ref{A2}}  with a similar rate to Theorem~\ref{l-rate} as follows.
\begin{theorem}\label{l-rate-3}
Let $(x^k)_{k\in \NN}$ and $\left(\beta_k\right)_{k\in \NN}$ be the
sequences generated in {\bf Method~\ref{A2}}. Suppose that $S_*\neq \emptyset$ and  there is
some $\beta>0$ satisfying $\beta_k\ge\beta>0$ for all $k\in \NN$.
Then for all $k\in \NN$
\begin{equation}\label{rate-3}
(f+g)(x^k)-\min_{x\in \HH}\,(f+g)(x)\le\frac{1}{2\beta } \frac{[{\rm
dist}(x^0,S_*)]^2+2\left[(f+g)(x^0)-\dsty\min_{x\in
\HH}(f+g)(x)\right]}{k}.
\end{equation}
If in addition $\dim \HH<+\infty$ then we have 
\begin{equation}\label{rate4}
\lim_{k\to\infty}k\left[(f+g)(x^k)-\min_{x\in \HH}\,(f+g)(x)\right]=0. 
\end{equation}
\end{theorem}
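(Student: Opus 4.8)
The plan is to derive both parts from the single descent estimate of Proposition~\ref{lema-para-qF}. Throughout, write $F:=f+g$, fix $x_*\in S_*$ (possible since $S_*\neq\emptyset$) so that $F(x_*)=\min_{x\in\HH}F(x)=:F_*$, and set $a_k:=\|x^k-x_*\|^2$ and $b_k:=F(x^k)-F_*\ge0$. Recall from \eqref{fkdec} that $(b_k)_{k\in\NN}$ is nonincreasing. For part \textbf{(i)} I would specialize Proposition~\ref{lema-para-qF} to $x=x_*$ and read it, for every $\ell\in\NN$, as the telescoping bound
\begin{equation*}
2\beta_\ell b_\ell\le (a_\ell-a_{\ell+1})+2(b_\ell-b_{\ell+1}).
\end{equation*}
Using $\beta_\ell\ge\beta$ and $b_\ell\ge0$ gives $2\beta b_\ell\le(a_\ell-a_{\ell+1})+2(b_\ell-b_{\ell+1})$, and summing over $\ell=0,\dots,k-1$ telescopes both right-hand differences, leaving $2\beta\sum_{\ell=0}^{k-1}b_\ell\le a_0+2b_0=\|x^0-x_*\|^2+2[F(x^0)-F_*]$. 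Since $(b_\ell)$ is nonincreasing we have $\sum_{\ell=0}^{k-1}b_\ell\ge k\,b_k$, whence $b_k\le\frac{1}{2\beta k}\big(\|x^0-x_*\|^2+2[F(x^0)-F_*]\big)$. Because $F(x_*)=F_*$ is the same for every $x_*\in S_*$, taking the infimum over $x_*\in S_*$ turns $\|x^0-x_*\|^2$ into $[{\rm dist}(x^0,S_*)]^2$ and produces exactly \eqref{rate-3}.

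For part \textbf{(ii)} the key step is to introduce the Lyapunov function $\phi_k:=a_k+2b_k=\|x^k-x_*\|^2+2[F(x^k)-F_*]$. Specializing Proposition~\ref{lema-para-qF} to $x=x_*$ reads precisely $\phi_{k+1}\le\phi_k-2\beta_k b_k$, so $(\phi_k)_{k\in\NN}$ is nonincreasing and bounded below by $0$; it therefore converges, and summing the inequality yields $2\beta\sum_{k=0}^{\infty}b_k\le\sum_{k=0}^{\infty}2\beta_k b_k\le\phi_0<+\infty$, i.e. $\sum_{k}b_k<+\infty$. Now $(b_k)$ is nonnegative, nonincreasing, and summable, and for such a sequence one has $k\,b_k\to0$: indeed $\frac{k}{2}\,b_k\le\sum_{\ell=\lceil k/2\rceil}^{k}b_\ell\le\sum_{\ell\ge\lceil k/2\rceil}b_\ell$, and the last quantity is a tail of a convergent series, hence tends to $0$. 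This gives $\lim_{k\to\infty}k\,b_k=0$, which is \eqref{rate4}.

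I expect the only delicate point to be the bookkeeping in part \textbf{(i)} — the simultaneous telescoping of the two differences and the correct index shift in $\sum_{\ell=0}^{k-1}b_\ell\ge k\,b_k$ — while part \textbf{(ii)} becomes routine once the potential $\phi_k$ is identified, reducing everything to the elementary monotone-summable lemma. It is worth emphasizing that this argument never invokes $\dim\HH<+\infty$: the hypothesis $\beta_k\ge\beta$ by itself forces $\sum_k b_k<+\infty$, so the $o(k^{-1})$ conclusion \eqref{rate4} in fact holds in any Hilbert space, the finite-dimensional assumption merely mirroring the statement of Theorem~\ref{l-rate}. Note also that, unlike the proof of \eqref{xrate} for {\bf Method~\ref{A1}}, here one cannot rely on Fej\'er monotonicity of $(a_k)$ (the sequence is only quasi-Fej\'er, so $a_k$ need not decrease), which is precisely why routing the estimate through the decreasing potential $\phi_k$ is the natural device.
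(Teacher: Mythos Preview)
Your proof of part \textbf{(i)} is correct and essentially identical to the paper's: both rearrange Proposition~\ref{lema-para-qF} at $x=x_*$, replace $\beta_\ell$ by $\beta$, telescope over $\ell=0,\dots,k-1$, and use the monotonicity of $(b_\ell)$ to extract $k\,b_k$ from the sum.

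Your argument for part \textbf{(ii)}, however, is genuinely different from the paper's and in fact stronger. The paper actually uses the hypothesis $\dim\HH<+\infty$: it invokes Theorem~\ref{ptos-de-acum2} to obtain \emph{strong} convergence $x^k\to x_*$, fixes $\varepsilon>0$, chooses $K$ with $\|x^K-x_*\|\le\varepsilon$ and $b_K\le\varepsilon$, sums the basic inequality over $\ell=K,\dots,K+k-1$, and arrives at $\limsup_k k\,b_k\le(\varepsilon^2+2\varepsilon)/(2\beta)$ for every $\varepsilon>0$. Your route bypasses strong convergence entirely: the potential $\phi_k=a_k+2b_k$ is nonincreasing by Proposition~\ref{lema-para-qF}, hence $\sum_k\beta_k b_k\le\phi_0/2<+\infty$, so $\sum_k b_k<+\infty$, and the elementary monotone--summable lemma gives $k\,b_k\to0$. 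As you correctly observe, this nowhere uses finite dimension (nor even weak convergence of $(x^k)$), so it establishes \eqref{rate4} in an arbitrary Hilbert space under the sole assumption $\beta_k\ge\beta>0$. The paper's approach parallels its proof of \eqref{xrate} for {\bf Method~\ref{A1}}, where Fej\'er monotonicity and strong convergence are the natural tools; your observation that the extra telescoping term $2(b_k-b_{k+1})$ available here can be absorbed into a single decreasing potential is exactly what makes the more elementary argument possible for {\bf Method~\ref{A2}}.
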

\begin{proof}
By using Proposition~\ref{lema-para-qF}, at $\ell\in \NN$ and $x_*\in
S_*$, we get
\begin{align}\nonumber
0\ge&(f+g)(x_*)-(f+g)(x^{\ell+1})\\\nonumber\ge&\frac{1}{2\beta_\ell}\left(
\|x^{\ell+1}-x_*\|^2-\|x^\ell-x_*\|^2+2\left[(f+g)(x^{\ell+1})-(f+g)(x^{\ell})\right]\right)\\
\ge & \frac{1}{2\beta}\left(
\|x^{\ell+1}-x_*\|^2-\|x^\ell-x_*\|^2+2\left[(f+g)(x^{\ell+1})-(f+g)(x^{\ell})\right]\right)\label{antes-sum-3}
\end{align}
for all $\ell\in \NN$. Summing the above inequality \eqref{antes-sum-3}, over $\ell=0,1,\ldots,k-1$, we have
\begin{align}\label{p-sum2-3}\nonumber
&\sum_{\ell=0}^{k-1}\left[(f+g)(x_*)-(f+g)(x^{\ell+1})\right]\ge\frac{1}{2\beta}\left(
\|x^{k}-x_*\|^2-\|x^0-x_*\|^2+2[(f+g)(x^{k})-(f+g)(x^{0})]\right)\\&\ge
\frac{1}{2\beta}\left(
\|x^{k}-x_*\|^2-\|x^0-x_*\|^2+2\left[(f+g)(x_*)-(f+g)(x^{0})\right]\right).
\end{align}
Noting that $(f+g)(x^{\ell+1})\ge (f+g)(x^{\ell})$ for all $\ell=0,\ldots, k-1$ by \eqref{fkdec}, we obtain from \eqref{p-sum2-3} that 
\begin{equation*}
k\left[(f+g)(x_*)-(f+g)(x^k)\right]\ge
\frac{1}{2\beta}\left(
\|x^{k}-x_*\|^2-\|x^0-x_*\|^2+2\left[(f+g)(x_*)-(f+g)(x^{0})\right]\right),
\end{equation*}
which clearly implies the following expression
\begin{equation}\label{ratex}
(f+g)(x^k)-(f+g)(x_*)\le\frac{1}{2\beta }
\frac{\|x^0-x_*\|^2+2\left[(f+g)(x^0)-(f+g)(x_*)\right]}{k}
\end{equation}
for all $x^*\in S_*$.  \eqref{rate-3} is obtained.

To justify \eqref{rate4} when $\dim \HH<+\infty$, suppose that $(x^k)_{k\in\NN}$ converges (strongly) to some $x_*\in S_*$ by Theorem~\ref{ptos-de-acum2}. Hence for any $\ve>0$ there exists some $K>0$ such that  
\begin{equation}\label{xx}
\|x^k-x_*\|\le \ve \quad \mbox{and}\quad (f+g)(x^k)-(f+g)(x_*)\le \ve\quad \mbox{for all}\quad k\ge K, 
\end{equation}
where the second inequality follows from  the recent estimate \eqref{ratex}. Adding \eqref{antes-sum-3} for $\ell=K, K+1,\ldots, K+k-1$ and noting that 
\begin{align*}
\sum_{\ell=K}^{K+k-1}\left[(f+g)(x_*)-(f+g)(x^{\ell+1})\right]\ge \frac{1}{2\beta}\left(\|x^{K+k}-x_*\|^2-\|x^K-x^*\|^2+2\left[(f+g)(x^{K+k})-(f+g)(x^{K})\right]\right).
\end{align*}
Since $(f+g)(x^{\ell+1})\ge (f+g)(x^{K+k})$ for all $\ell=K, K+1,\ldots, K+k-1$ by \eqref{fkdec}, we get from the latter and \eqref{xx} that 
\begin{equation*}
k\big[(f+g)(x_*)-(f+g)(x^{K+k})\big]\disp\ge \frac{1}{2\beta}\Big(-\|x^K-x^*\|^2+2\big[(f+g)(x_*)-(f+g)(x^K)\big]\Big)\disp\ge \frac{1}{2\beta}(-\ve^2-2\ve). 
\end{equation*}
It follows that 
\begin{align*}
\limsup_{k\to\infty}\, k\big[(f+g)(x^{k})-(f+g)(x_*)\big]&=\limsup_{k\to\infty}\, (K+k)\big[(f+g)(x^{K+k})-(f+g)(x_*)\big]\\
&\le\limsup_{k\to\infty}\,\frac{K+k}{k}\cdot\frac{\ve^2+2\ve}{2\beta}=\frac{\ve^2+2\ve}{2\beta}. 
\end{align*}
Since this inequality holds for any $\ve>0$, we have 
$
\limsup_{k\to\infty}\, k\big[(f+g)(x^{k})-(f+g)(x_*)\big]\le 0. 
$
Note that $(f+g)(x^{k})-(f+g)(x_*)\ge 0$ for all $k\in \NN$, we get  \eqref{rate4} and thus complete the proof of theorem.
\end{proof}

%%%%%%%%%%%%%%%%%%%%%%%%%%%%%%%%%%%%%%%%%%%%%%%%%%%%%%%%%%%%%%%%%%%%%%

%%%%%%%%%%%%%%%%%%%%%%%%%%%%%%%%%%%%%%%%%%%%%%%%%%%%%%%%%%%%%%%%%%%%%%
Similarly to Lemma~\ref{lema-alpha}, we present some  sufficient conditions for the below boundedness by a positive constant of the stepsize
generated by {\bf Linesearch \ref{boundary2}}.
\begin{proposition}\label{lema-alpha2}
Let $(\beta_k)_{k\in \NN}$ be the sequence generated by {\bf
Linesearch~\ref{boundary2}} on {\bf Method~\ref{A2}}. The following
statements hold:

\item {\bf (i)} If the gradient
of $f$ is globally Lipschitz continuous on $\dom g$ with constant $L>0$, then
$\beta_k\ge \min\left\{1, \frac{\theta}{2L}\right\}$ for
all $k\in \NN$.

\item {\bf (ii)} Suppose that $\dim \HH<+\infty$ and $S_*\neq\emptyset$. If $\nabla f$ is locally Lipschitz continuous at any
$x\in S_*$ then there exists  $x_*\in S_*$ such that
\begin{equation}\label{eng}
\liminf_{k\to\infty}\beta_k\ge
\min\Big\{1,\frac{\theta}{2\mathcal{L}}\Big\},
\end{equation}
where $\mathcal{L}>0$ is a Lipschitz constant of $\nabla f$ around
$x_*$. Consequently, there exists $\beta>0$ such that $\beta_k\ge
\beta$ for all $k\in \NN$.
\end{proposition}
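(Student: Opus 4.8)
The plan is to mimic the structure of Proposition~\ref{lema-alpha}, replacing the defining inequality \eqref{eq-no-a} of {\bf Linesearch~\ref{boundary}} by its analogue for {\bf Linesearch~\ref{boundary2}}. The starting point is the estimate \eqref{bb}, namely $\frac12\|x^k-J_k\|\le\|\nabla f(\hat y^k)-\nabla f(x^k)\|$, where $\hat\beta_k:=\beta_k/\theta$ and $\hat y^k:=x^k-\hat\beta_k(x^k-J_k)$. Although \eqref{bb} was derived in the proof of Theorem~\ref{ptos-de-acum2} under the assumption $\beta_k\to0$, the computation there uses only the failure of the linesearch test \eqref{no-armijo} at $\hat\beta_k$; hence it is valid for every index $k$ with $\beta_k<1$. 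If instead $\beta_k=1$ there is nothing to prove, since then $\beta_k\ge\min\{1,\cdot\}$ automatically. I would also record that $\beta_k<1$ forces $\beta_k\in\{\theta,\theta^2,\dots\}$, so $\hat\beta_k\le1$ and $\hat y^k=(1-\hat\beta_k)x^k+\hat\beta_kJ_k$ is a convex combination of $x^k,J_k\in\dom g$, whence $\hat y^k\in\dom g$; moreover such $k$ satisfy $x^k\neq J_k$ (otherwise the test accepts $\beta=1$), so $\|x^k-J_k\|>0$.

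For part~{\bf(i)}, assuming $\nabla f$ is $L$-Lipschitz on $\dom g$, I would bound the right-hand side of \eqref{bb} by $\|\nabla f(\hat y^k)-\nabla f(x^k)\|\le L\|\hat y^k-x^k\|=L\hat\beta_k\|x^k-J_k\|$, which is legitimate because $\hat y^k\in\dom g$. For any $k$ with $\beta_k<1$ cancelling the nonzero factor $\|x^k-J_k\|$ gives $\frac12<L\hat\beta_k=L\beta_k/\theta$, i.e. $\beta_k>\theta/(2L)$. Combining this with the case $\beta_k=1$ yields $\beta_k\ge\min\{1,\theta/(2L)\}$ for all $k$, as claimed.

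For part~{\bf(ii)}, Theorem~\ref{ptos-de-acum2}{(i)} together with $\dim\HH<+\infty$ provides strong convergence $x^k\to x_*\in S_*$. Fix $\ve,\mathcal L>0$ so that $\|\nabla f(x)-\nabla f(y)\|\le\mathcal L\|x-y\|$ for all $x,y\in\mathbb{B}_\ve(x_*)$, from the local Lipschitz property of $\nabla f$ at $x_*$. The decisive step is to place $\hat y^k$ inside this ball. From \eqref{fkdec} and $x^{k+1}-x^k=\beta_k(J_k-x^k)$ the increments satisfy $\sum_k\|x^{k+1}-x^k\|^2<+\infty$, so $\|x^{k+1}-x^k\|\to0$; combined with the exact identity $\|\hat y^k-x^k\|=\hat\beta_k\|x^k-J_k\|=\theta^{-1}\|x^{k+1}-x^k\|$ this forces $\hat y^k\to x_*$ as well. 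Hence there is $K$ with $x^k,\hat y^k\in\mathbb{B}_\ve(x_*)$ for all $k>K$, and for such $k$ with $\beta_k<1$ the local Lipschitz bound turns \eqref{bb} into $\frac12<\mathcal L\hat\beta_k=\mathcal L\beta_k/\theta$, i.e. $\beta_k>\theta/(2\mathcal L)$. This gives $\liminf_{k\to\infty}\beta_k\ge\min\{1,\theta/(2\mathcal L)\}$, and the final uniform bound follows by setting $\beta:=\min\{\beta_0,\dots,\beta_K,\min\{1,\theta/(2\mathcal L)\}\}>0$, positive since each $\beta_k>0$ and only finitely many indices are involved.

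The main obstacle is precisely the neighbourhood control in part~{\bf(ii)}: unlike the global case, the Lipschitz inequality is available only on $\mathbb{B}_\ve(x_*)$, so one must certify $\hat y^k\in\mathbb{B}_\ve(x_*)$ before invoking it. This is where the exact relation $\|\hat y^k-x^k\|=\theta^{-1}\|x^{k+1}-x^k\|$, the summability of the increments coming from the descent inequality \eqref{fkdec}, and the strong convergence $x^k\to x_*$ (available only in finite dimensions) must be combined. The remaining manipulations are the same elementary cancellations already used in Proposition~\ref{lema-alpha}.
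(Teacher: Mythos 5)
Your proof is correct, and part (i) coincides with the paper's argument: both reuse inequality \eqref{bb}, which is indeed valid at every index $k$ with $\beta_k<1$ since its derivation only needs the failure of the linesearch test at $\hat\beta_k=\beta_k/\theta$ together with convexity. Your explicit remark that $\beta_k<1$ forces $\hat\beta_k\le 1$, so that $\hat y^k$ is a convex combination of points of $\dom g$, is a point the paper leaves implicit but which is genuinely needed both to apply the convexity of $g$ inside the derivation of \eqref{bb} and to invoke the Lipschitz bound on $\dom g$. The interesting divergence is in part (ii). The paper certifies $\hat y^k\in\mathbb{B}_\ve(x_*)$ by a two-case dichotomy: Case 1 assumes $(\beta_k)_{k\in\NN}$ is bounded below by some $\beta>0$ and uses $\|x^k-J_k\|=\|x^{k+1}-x^k\|/\beta_k\le\|x^{k+1}-x^k\|/\beta\to0$; Case 2 assumes it is not and reaches a contradiction along a null subsequence of $(\beta_k)_{k\in\NN}$ after checking that $(x^k)_{k\in\NN}$ and $(J_k)_{k\in\NN}$ are bounded. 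You bypass the dichotomy entirely via the exact identity $\|\hat y^k-x^k\|=\hat\beta_k\|x^k-J_k\|=\theta^{-1}\|x^{k+1}-x^k\|$, in which the stepsize cancels; since $x^k\to x_*$ strongly (Theorem~\ref{ptos-de-acum2} plus $\dim\HH<+\infty$) already gives $\|x^{k+1}-x^k\|\to0$, this places $x^k$ and $\hat y^k$ in $\mathbb{B}_\ve(x_*)$ for all large $k$ unconditionally, with no hypothesis on the behavior of $(\beta_k)_{k\in\NN}$. This is a genuine simplification: the paper's case analysis is redundant precisely because of this cancellation. Two minor remarks: your detour through \eqref{fkdec} and the summability of the squared increments is unnecessary, since $\|x^{k+1}-x^k\|\to 0$ follows directly from the strong convergence of $(x^k)_{k\in\NN}$; and your final assembly of the uniform bound $\beta:=\min\bigl\{\beta_0,\dots,\beta_K,\min\{1,\theta/(2\mathcal L)\}\bigr\}>0$ correctly delivers the last assertion of the proposition, matching the paper's conclusion.
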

\begin{proof}
First let us verify {\bf(i)} by supposing that  the gradient
of $f$ is globally Lipschitz continuous on $\dom g$ with constant $L>0$. Define $\hat{\beta}_k:=\dsty
\frac{\beta_k}{\theta}>0$ and
\begin{equation}\label{hatxk**-3}
\hat{y}^k:=\hat{\beta}_kJ_k+(1-\hat{\beta}_k)x^k=x^k-\hat{\beta}_k(x^k-J_k).
\end{equation}
If $\beta_k<1$, we get from
 {\bf Linesearch~2} that
$$
(f+g)(\hat{y}^k)>(f+g)(x^k)-\hat{\beta}_k[g(x^k)-g(J_k)]-\hat{\beta}_k\la\nabla
f(x^k), x^k-J_k\ra +\frac{\hat{\beta}_k}{2}\|x^k-J_k\|^2,
$$
which together with \eqref{hatxk**-3} and  that $x^k-J_k\neq
0$ implies that $\hat y^k\neq x^k$. Furthermore, it is similar to
\eqref{bb} in the proof of Theorem~\ref{ptos-de-acum2} that  $
\frac{1}{2}\| x^k-J_k\|\le \| \nabla f(\hat{y}^k)-\nabla f(x^k)\|$.
Due to the Lipschitz continuity with constant $L$ of $\nabla f$, we
get from the latter and \eqref{hatxk**-3} that
\[
 \frac{1}{2}\|
x^k-J_k\|\le L \|x^{k}-\hat{y}^{k}\|=L\hat \beta_k \| x^k-J_k\|.
\]
Since $x^k-J_k\neq 0$, the inequality above yields $ \hat \beta_k
\ge \frac{1}{2L}$ and thus $\beta_k\ge \frac{\theta}{2L}$ when $\beta_k<1$. It follows that $\beta_k\ge \min\{1,\frac{\theta}{2L}\}$ as desired.

To verify the second part, suppose that $\dim \HH<+\infty$,
$S_*\neq\emptyset$, and that  $\nabla f$ is locally Lipschitz
continuous at any $x\in S_*$. By Theorem~\ref{ptos-de-acum2}, suppose that $(x^k)_{k\in \NN}$ (strongly) converges to $x_*\in S_*$. Hence there exist $\ve, \mathcal{L}>0$ such that 
\[
\|\nabla f(x)-\nabla f(y)\|\le \mathcal{L}\|x-y\|\quad \mbox{for all}\quad x,y\in \mathbb{B}_\ve(x_*).
\]
Since $x^k\to x_*$ as $k\to \infty$, we find $K>0$ such that $\|x^k-x_*\|\le \ve$ for all $k>K$. Pick any $k>K$, if $\beta_k<1$, define $\hat{\beta}_k=\dsty
\frac{\beta_k}{\theta}>0$ and
$\hat{y}^k=\hat{\beta}_kJ_k+(1-\hat{\beta}_k)x^k$. Similarly to the above argument of
the first part, we have $x^k-J_k\neq 0$ and
\begin{equation}\label{nm}
\frac{1}{2}\| x^k-J_k\|\le \| \nabla f(\hat{y}^k)-\nabla f(x^k)\|.
\end{equation}
We consider two cases as in Theorem~\ref{ptos-de-acum2} as below:

\noindent {\bf Case 1.}  The sequence $(\beta_k)_{k\in \NN}$ is bounded below by a positive number $\beta>0$. Thanks to \eqref{paso3} we have 
\[
 \|x^k-J_k\|= \frac{\|x^k-x^{k+1}\|}{\beta_k}\le\frac{\|x^k-x^{k+1}\|}{\beta}\to 0
\]
as $k\to \infty$. It follows that 
$\disp
\|x^k-\hat y^k\|=\frac{\beta_k}{\theta}\|x^k-J_k\|\to 0,
$
which tells us that $(\hat y^k)_{k\in\NN}$  is  converging to
$x_*$. Hence there exists $K_1>K$ such
that $\hat y^k\in \mathbb{B}_\ve(x_*)$  for all $k>K_1$. By combining this with 
\eqref{nm},  we derive 
\[
\frac{1}{2}\| x^k-J_k\|\le \mathcal{L}
\|\hat{y}^k-x^k\|=\mathcal{L}\hat \beta_k \|x^k-J_k\|\quad \mbox{for
all}\quad k>K_1.
\]
Since $\| x^k-J_k\|\neq 0$, the latter gives us that  $\frac{1}{2}\le \mathcal{L}\hat
\beta_k$, i.e., $\beta_k\ge \frac{\theta}{2\mathcal{L}}$ for
all $k>K_1$.

\noindent {\bf Case 2.} The sequence $(\beta_k)_{k\in \NN}$ is not bounded below by a positive number $\beta$. Hence we may find a subsequence (no labeling)  $(\beta_k)_{k\in \NN}$ converging to $0$. It is similar to the proof of {\bf Case 2} in Theorem~\ref{ptos-de-acum2} that  $(x^k)_{k\in\NN}$ and $(J_k)_{k\in\NN}$ are bounded. It follows that 
\[
\lim_{k\to\infty} \|x^k-\hat y^k\|=\lim_{k\to\infty}\frac{\beta_k}{\theta}\|x^k-J_k\|= 0.
\]
Thus  the sequence $(\hat y^{k})_{k\in\NN}$ is converging to
$x_*$. Repeating the corresponding part in the proof of {\bf Case 1} above, we also have  $\beta_k\ge \frac{\theta}{2\mathcal{L}}$ for any large $k$, which is the contradiction.

From the analysis of both cases above, we find $K_1>0$ such that $\beta_k\ge \frac{\theta}{2\mathcal{L}}$ if $\beta_k<1$ for any $k>K_1$. This means  $\beta_k\ge \min\{1,\frac{\theta}{2\mathcal{L}}\}$ for $k\ge K_1$. The proof is complete. 
\end{proof}

Let us complete the section by presenting a corresponding corollary
to Corollary~\ref{with-Lip}, which is easily derived from
Theorem~\ref{l-rate-3} and Proposition~\ref{lema-alpha2}.
\begin{corollary}
Let $(x^k)_{k\in \NN}$ be the sequence generated by
{\bf Method~\ref{A2}}. Suppose that $S_*\neq \emptyset$. 

\item {\bf (i)} If the gradient of $f$ is globally Lipschitz continuous  on $\dom g$, then 

$$(f+g)(x^k)-\min_{x\in
	\HH}(f+g)(x)=\mathcal{O}(k^{-1}).$$

\item {\bf(ii)} If $\dim \HH<+\infty$ and the gradient of $f$ is locally  Lipschitz continuous  on
$S_*$, then we have
$$(f+g)(x^k)-\min_{x\in
\HH}(f+g)(x)=o(k^{-1}).$$
\end{corollary}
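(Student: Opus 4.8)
The plan is to read off both parts as direct consequences of Theorem~\ref{l-rate-3} and Proposition~\ref{lema-alpha2}, exactly as announced in the sentence preceding the statement; no new analysis is needed. The only hypothesis of Theorem~\ref{l-rate-3} beyond $S_*\neq\emptyset$ is the existence of a uniform lower bound $\beta_k\ge\beta>0$ on the stepsizes, so the entire task reduces to verifying that each set of assumptions produces such a bound and then quoting the matching estimate from that theorem.

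For part (i), I would apply Proposition~\ref{lema-alpha2}(i): if $\nabla f$ is globally Lipschitz continuous on $\dom g$ with constant $L$, then $\beta_k\ge\min\{1,\theta/(2L)\}=:\beta>0$ for every $k\in\NN$. This is precisely the standing hypothesis of Theorem~\ref{l-rate-3}, so estimate \eqref{rate-3} applies. Its right-hand side has the form $\frac{1}{2\beta}\cdot\frac{C}{k}$, where
\[
C:=[{\rm dist}(x^0,S_*)]^2+2\Big[(f+g)(x^0)-\min_{x\in\HH}(f+g)(x)\Big]
\]
is a constant independent of $k$; hence the bound is $\mathcal{O}(k^{-1})$, as claimed.

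For part (ii), the finite-dimensional setting together with local Lipschitz continuity of $\nabla f$ at each point of $S_*$ lets me invoke Proposition~\ref{lema-alpha2}(ii), whose concluding ``consequently'' clause asserts exactly that there is some $\beta>0$ with $\beta_k\ge\beta$ for all $k\in\NN$. Feeding this uniform bound and $\dim\HH<+\infty$ into the second conclusion \eqref{rate4} of Theorem~\ref{l-rate-3} yields $\lim_{k\to\infty}k\big[(f+g)(x^k)-\min_{x\in\HH}(f+g)(x)\big]=0$, which is by definition the estimate $o(k^{-1})$. I expect no genuine obstacle here, since all the analytic work lives in the two cited results; the one point to be careful about is the hypothesis matching in part (ii), namely that Proposition~\ref{lema-alpha2}(ii) delivers a bound valid for \emph{all} $k$ (not merely $\liminf_{k\to\infty}\beta_k>0$), so that the requirement $\beta_k\ge\beta$ of Theorem~\ref{l-rate-3} is literally satisfied and the finite-dimensional refinement \eqref{rate4} is legitimately applied.
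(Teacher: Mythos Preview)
Your proposal is correct and matches the paper's own approach exactly: the paper does not even write out a proof, stating only that the corollary ``is easily derived from Theorem~\ref{l-rate-3} and Proposition~\ref{lema-alpha2},'' which is precisely the chain of implications you spell out.
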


%%%%%%%%%%%%%%%%%%%%%%%%%%%%%%%%%%%%%%%%%%%%%%%%%%%%%%%%%%%%%%%%%%%%%%%%%%%%%%%%%%%%
%%%%%%%%%%%%%%%%%%%%%%%%%%%%%%%% Algorithm 3 %%%%%%%%%%%%%%%%%%%%%%%%%%%%%%%%%%%%%%%
%%%%%%%%%%%%%%%%%%%%%%%%%%%%%%%%%%%%%%%%%%%%%%%%%%%%%%%%%%%%%%%%%%%%%%%%%%%%%%%%%%%%

\section{Conclusions}\label{s:6}
In Hilbert spaces, it is well-known that convexity on both functions
and global Lipschitz continuity on the gradient of $f$ are
sufficient for providing convergence of the sequence generated by
the forward-backward splitting methods for solving problem
\eqref{prob}. However, the Lipschitz assumption is usually a
restriction in many particular circumstances. In this work we dealt
with weak convergence of the forward-backward splitting
method for convex optimization problems by taking the advantage of
the  linesearches. This  not only eliminates the serious drawback of
estimating the Lipschitz constant  to choose the stepsize in
\eqref{F-B-I} but also establishes many complexity results without
imposing the Lipschitz assumption. Our schemes through the
linesearches provide   rigorous and implementable ways of updating
the iterates, which can be easily adapted for applications.

We hope that this study will serve as a basis for future research on
other efficient variants of the forward-backward splitting
iteration. In particular we find possibility to develop our methods
to the descent coordinate gradient method
\cite{nest-SIAM} for solving structured
convex optimization problems. Moreover, we discuss in separate
papers the cases when $f$ or $g$ are nonconvex following the ideas
exposed in \cite{bot-2014} and even removing the differentiability
of $f$ and adding dynamic choices of the stepsizes with conditional
and deflected techniques combining the ideas in
\cite{yun-2014,cond,francolli}. We are also looking to the
incremental (sub)gradient method like \cite{nedic-SIAM-2001} for
problem \eqref{prob}, when $f$ is the sum of a large number of
functions. {An interesting project, suggested by a referee, that we are pursuing is to study possible complexity $o(k^{-2})$ and the weak convergence  of {\bf Method2} without assuming the global Lipschitz continuity on the gradient of the smooth function as in \cite{attouch,chambolle}.} \vspace*{-0.1in}

\subsection*{ACKNOWLEDGMENTS} This work was partially completed while the authors were visiting University of British Columbia Okanagan (UBCO). The authors are
grateful to the Irving K. Barber School of Arts and Sciences at UBCO
and particularly to  Heinz H. Bauschke and Shawn Wang for the generous hospitality.  {We also would like to express our gratitude to two anonymous referees for many useful suggestions, which allowed us to significantly improve the original presentation.}

\end{document}